\theoremstyle{plain}
\newtheorem{theorem}{Theorem}[section]
\newtheorem{corollary}[theorem]{Corollary}
\newtheorem{lemma}[theorem]{Lemma}
\newtheorem{claim}[theorem]{Claim}
\newtheorem{observation}[theorem]{Observation}
\newcommand{\vast}{\bBigg@{4}}
\newcommand{\Vast}{\bBigg@{5}}
\definecolor{bulgarianrose}{rgb}{0.28, 0.02, 0.03}
\definecolor{pk}{rgb}{0.7,0.4,0.5}
\theoremstyle{definition}
\newtheorem{remark}[theorem]{Remark}
\newtheorem{definition}[theorem]{Definition}
\newtheorem{question}[theorem]{Question}
\newcommand{\Un}{\mathrm{Unif}([0,1])}
\newcommand{\PoNN}{G_{n,\mathrm{Po}}}
\newcommand{\diam}{\mathrm{diam}}
\newcommand{\intqi}{\mathrm{int}(q_i)}
\newcommand{\pnt}{\mathrm{par}}
\def\namedlabel#1#2{\begingroup
    #2%
    \def\@currentlabel{#2}%
    \phantomsection\label{#1}\endgroup
}
\pgfplotsset{compat = 1.16}
\newcommand\tsup[2][2]{%
 \def\useanchorwidth{T}%
  \ifnum#1>1%
    \stackon[-.5pt]{\tsup[\numexpr#1-1\relax]{#2}}{\scriptscriptstyle\sim}%
  \else%
    \stackon[.5pt]{#2}{\scriptscriptstyle\sim}%
  \fi%
}
\title{\scshape
  New results for the random nearest neighbor tree}
\author[1,3]{Lyuben Lichev}
\author[1,2]{Dieter Mitsche\footnote{Dieter Mitsche has been partially supported by grant Fondecyt grant 1220174 and by grant GrHyDy ANR-20-CE40-0002.}}
\affil[1]{Univ.~Jean Monnet, Institut Camille Jordan, Saint-Etienne, France}
\affil[2]{IMC, Pont.~Univ.~Católica, Santiago, Chile}
\affil[3]{Institute of Mathematics and Informatics, Bulgarian Academy of Sciences, Sofia, Bulgaria}
\begin{document}

\maketitle
 
\begin{abstract}
In this paper, we study the online nearest neighbor random tree in dimension $d\in \mathbb N$ (called $d$-NN tree for short) defined as follows. 
We fix the torus $\mathbb T^d_n$ of dimension $d$ and area $n$ and equip it with the metric inherited from the Euclidean metric in $\mathbb R^d$. 
Then, embed consecutively $n$ vertices in $\mathbb T^d_n$ uniformly at random and independently, and let each vertex but the first one connect to its (already embedded) nearest neighbor. 
Call the resulting graph $G_n$. 

We show multiple results concerning the degree sequence of $G_n$. First, we prove that typically the number of vertices of degree at least $k\in \mathbb N$ in the $d$-NN tree decreases exponentially with $k$ and is tightly concentrated by a new Lipschitz-type concentration inequality that may be of independent interest. Second, we obtain that the maximum degree of $G_n$ is of logarithmic order. Third, we give explicit bounds for the number of leaves that are independent of the dimension and also give estimates for the number of paths of length two. 
Moreover, we show that typically the height of a uniformly chosen vertex in $G_n$ is $(1+o(1))\log n$ and the diameter of $\mathbb T^d_n$ is 
$(2e+o(1))\log n$, independently of the dimension.

Finally, we define a natural infinite analog $G_{\infty}$ of $G_n$ and show that it corresponds to the local limit of the sequence of finite graphs $(G_n)_{n \ge 1}$. 
Moreover, we prove almost surely that $G_{\infty}$ is locally finite, that the simple random walk on $G_{\infty}$ is recurrent, and that $G_{\infty}$ is connected.
\end{abstract}

\section{Introduction}
Inferring the structure of growing random (spatial) networks is a major mathematical challenge with numerous important applications: one wants to know how a virus spreads, how a biological network evolves,
how rumors spread in a social network, or how information spreads in a telecommunication network.
One way to algorithmically capture the closeness between vertices in such (and other) networks is the famous $k$-nearest neighbors algorithm, invented by Fix and Hodges~\cite{FH}, which has by now become a standard tool for non-parametric classification and regression in statistics:
its idea roughly consists in embedding the data points into some predetermined metric space and classifying new arrivals according to the properties of the closest $k$ data points already processed. 
Compared to the vast amount of literature in machine learning and statistics on the $k$-nearest neighbor algorithm (see e.g.\ \cite{Alt,BJ,BC,CH,Dud} among others), 
less is known about the structure of a typical digraph originating from it by orienting edges from newly arrived data points to their $k$ nearest predecessors (see Section~\ref{relwork} for a detailed account on related results).

The goal of the present paper is to provide some insight on the topic in the particular case $k=1$. 
First, we formally define the model. Fix positive integers $d$ and $n$.  
The \emph{online nearest neighbor process} in the $d$-dimensional torus $\mathbb T_n^d$ with volume $n$, or the \emph{$d$-NN process} for short, 
is defined as follows:\footnote{The choice of the torus as an ambient space might not be the most natural. 
However, it avoids the need for boundary considerations and, in most part, does not modify the results. 
Indeed, the main proof ideas can be applied for other geometric spaces but at the cost of an increased level of technicality.} 
Starting from $0$, at every integer time step $i\in \{0,1,\dots,n-1\}$, embed a new vertex in $\mathbb T^d_n$ uniformly and independently from the positions of the previous vertices. 
For convenience, we identify every vertex with its time of embedding.
Then, if $i > 0$, connect the vertex $i$ by an edge to the closest vertex among $\{0,1,\dots,i-1\}$ with respect to the torus distance, ties being broken arbitrarily (of course, ties do not happen a.s.). 
Clearly, before step $i\in \{0,1,\dots,n-1\}$, the graph obtained by this process is a tree with $i$ vertices which we denote by $G_i$. 
Our main object of interest is the final tree $G_n$, to which we refer as \emph{the $d$-NN tree on $n$ vertices} or simply \emph{the $d$-NN tree}. 
In the sequel, we also consider an infinite version of this model that will be introduced shortly.

\subsection{Notation}

We mostly rely on standard notation. 
For a graph $G$, we denote by $V(G)$ its vertex set and by $E(G)$ its edge set. 
We also denote by $|G|$ the \emph{order} of $G$ (i.e.\ the size of $V(G)$), by $\mathrm{diam}(G)$ the diameter of $G$, by $\Delta(G)$ the maximum degree of $G$, and by $L(G)$ the number of leaves of $G$ (that is, the number of vertices of degree 1). 
Also, for $t\in \mathbb N$ and a graph $G$, a $t$-neighbor of a vertex $u$ in $G$ is a vertex at graph distance $t$ from $u$, and the $t$-neighborhood of $u$, denoted by $N^t_G[u]$, is the set of vertices at distance at most $t$ from $u$ in $G$. 
For a rooted tree $T$ and a vertex $v$ of $T$, denote by $h(v, T)$ the distance from $v$ to the root of $T$ (which we call the \emph{height of $v$ in $T$}), and by $h(T)$ the \emph{height} of $T$, that is, $h(T) = \max_{v\in V(T)} h(v,T)$. 
Whenever we speak of an oriented edge in a rooted tree below, the orientation of the edge is always towards the root. 
The lowercase letters $u,v,w$, possibly with some upper and lower indices, are reserved for vertices, and $e$ is reserved for edges. 
For a vertex $v\in V(G_n)$, we denote by $p(v)$ its position in $\mathbb T^d_n$. 

The set of non-negative integers is denoted by $\mathbb N_0$, and moreover $[n] = \{1,\dots,n\}$ and $[n]_0 = [n]\cup \{0\}$. 
We rely on standard asymptotic notation $O(\cdot),\Theta(\cdot),\Omega(\cdot),o(\cdot), \omega(\cdot)$ together with $f(n)\ll g(n)$ (or equivalently $g(n)\gg f(n)$) when $f(n) = o(g(n))$. 
We note that constants in the $O,\Theta,\Omega$ notation may depend on parameters of the problem (in particular, they may depend on $d$).

We say that a sequence of events $(E_n)_{n \ge 1}$ holds asymptotically almost surely (or a.a.s.) if $\lim_{n \to \infty}\mathbb P(E_n)=1$. 
We also use the notation $X \sim \mathcal D$ to say that a random variable $X$ is distributed according to distribution $\mathcal D$. 
Furthermore, given $x\in \mathbb R^d$ and a real number $r\ge 0$, we denote by $B(x,r)$ the closed ball with radius $r$ and center $x$ in $\mathbb R^d$.
We also remark that the notation $\mathrm{diam}_E(\cdot)$ is also used to denote the Euclidean diameter of convex bodies like balls and cubes in $\mathbb R^d$ (where the dimension is spared in the notation for convenience).

\subsection{\texorpdfstring{Two equivalent definitions and the Poisson $d$-NN model}{}}
The independence between the positions and the arrival times of the vertices of $G_n$ is a crucial ingredient in our analysis. The next definition emphasizes the aforementioned decomposition into space-time components.

\begin{definition}[Equivalent definition 1]\label{eq def 1}
Given $n$ unlabeled vertices, assign labels to them via sampling a permutation of $[n-1]_0$ uniformly at random. Then, embed these $n$ vertices independently and uniformly in $\mathbb T^d_n$. Finally, $G_n$ is formed by connecting by an edge every vertex with label $i\ge 1$ to its closest neighbor with smaller label.
\end{definition}

While in the above definition, the space-time decomposition was made explicit, the labels attributed to the vertices in the second stage are not independent. The next reformulation corrects this inconvenience.

\begin{definition}[Equivalent definition 2]\label{eq def 2}
Given $n$ vertices $\{v_0,\dots,v_{n-1}\}$, sample a family $(X_i)_{i\in [n-1]_0}$ of i.i.d.\ $\Un$ arrival times, and set $i_0$ as the index of the smallest among $(X_i)_{i\in [n-1]_0}$. Then, embed $\{v_0,\dots,v_{n-1}\}$ independently and uniformly in $\mathbb T^d_n$. Finally, $G_n$ is formed by connecting by an edge each vertex $\{v_i, i\in [n-1]_0\setminus i_0\}$ to its closest neighbor among $\{v_j: X_j < X_i\}$.
\end{definition}

Note that Definition~\ref{eq def 2} may fail to produce a tree in case of ties, but we may ignore this minor technicality since this happens with probability 0. Below, we use the two equivalent definitions of the model interchangeably, depending on which one is more useful in the particular context.

A twin model of the $d$-NN tree is the \emph{Poisson $d$-NN tree} $\PoNN$ formed by embedding $N\sim \mathrm{Po}(n)$ vertices independently and uniformly in $\mathbb T^d_n$ (which defines a Poisson Point Process with intensity 1 on $\mathbb T^d_n$, abbreviated PPP(1)). Note that the Poisson variable $N$ is assumed to be independent of all positions and arrival times. The main advantage of defining the set of vertex positions via a Poisson Point Process is motivated by the following two properties: first, the number of points that lie in any measurable set $A\subseteq \mathbb T^d_n$ of Lebesgue measure $a$ has a Poisson distribution with expectation $a$, and second, the number of points in disjoint subsets of $\mathbb T^d_n$ are independently distributed. To avoid repetition, we (somehow abusively) refer to Definitions~\ref{eq def 1}~and~\ref{eq def 2} when dealing with $\PoNN$ as well.

Another nice feature of the Poisson model is that it may be extended to non-compact spaces via Definition~\ref{eq def 2}. As above, we ignore possible ties (these happen with probability $0$ also for infinite countable sets of vertices).

\begin{definition}[The infinite Poisson $d$-NN random tree]\label{def inf PNNT}
Let $\mathcal V$ be a Poisson Point Process in $\mathbb R^d$. Sample a family $(X_v)_{v\in \mathcal V}$ of i.i.d.\ $\Un$ arrival times. Then, $G_{\infty}$ is formed by connecting by an edge each vertex $v\in \mathcal V$ to its closest neighbor among $\{u: X_u < X_v\}$.
\end{definition}

A major difference with the finite graphs constructed above is that $G_{\infty}$ is not deterministically a tree even if Definition~\ref{def inf PNNT} avoids ties. An example of a disconnected nearest neighbor graph embedded in $\mathbb R\subset \mathbb R^d$ is given by two sets of vertices on the real axis with positions $(\sum_{i=1}^n i^{-1})_{n\ge 1}\cup (-\sum_{i=1}^n i^{-1})_{n\ge 1}$ and arrival times respectively $2^{-2n+1}$ for the vertex in position $\sum_{i=1}^n i^{-1}$ and $2^{-2n}$ for the vertex in position $- \sum_{i=1}^n i^{-1}$ (see Figure~\ref{fig 1}). We will however show that $G_{\infty}$ is connected a.s.

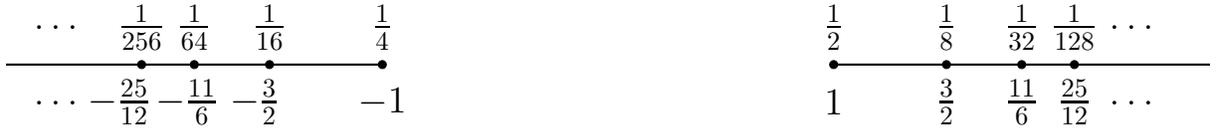
\begin{figure}
\centering
\begin{tikzpicture}[line cap=round,line join=round,x=1cm,y=1cm]
\clip(-8,-1) rectangle (8,1);
\draw [line width=0.8pt,domain=3:9.907319633388674] plot(\x,{(-0-0*\x)/6});
\draw [line width=0.8pt,domain=-9.146006214703345:-3] plot(\x,{(-0-0*\x)/6});
\begin{scriptsize}
\draw [fill=black] (-3,0) circle (1.5pt);
\draw[color=black] (-3,-0.5) node {\Large{$-1$}};
\draw[color=black] (-3,0.5) node {\Large{$\frac14$}};
\draw [fill=black] (3,0) circle (1.5pt);
\draw[color=black] (3,-0.5) node {\Large{$1$}};
\draw[color=black] (3,0.5) node {\Large{$\frac12$}};
\draw [fill=black] (4.5,0) circle (1.5pt);
\draw[color=black] (4.5,-0.5) node {\Large{$\frac32$}};
\draw[color=black] (4.5,0.5) node {\Large{$\frac18$}};
\draw [fill=black] (5.5,0) circle (1.5pt);
\draw[color=black] (5.5,-0.5) node {\Large{$\frac{11}{6}$}};
\draw[color=black] (5.5,0.5) node {\Large{$\frac{1}{32}$}};
\draw [fill=black] (6.2,0) circle (1.5pt);
\draw[color=black] (6.2,-0.5) node {\Large{$\frac{25}{12}$}};
\draw[color=black] (6.2,0.5) node {\Large{$\frac{1}{128}$}};
\draw[color=black] (7,-0.5) node {\Large{$\dots$}};
\draw[color=black] (7,0.5) node {\Large{$\dots$}};
\draw [fill=black] (-4.5,0) circle (1.5pt);
\draw[color=black] (-4.7,-0.5) node {\Large{$-\frac32$}};
\draw[color=black] (-4.5,0.5) node {\Large{$\frac{1}{16}$}};
\draw [fill=black] (-5.5,0) circle (1.5pt);
\draw[color=black] (-5.6,-0.5) node {\Large{$-\frac{11}{6}$}};
\draw[color=black] (-5.5,0.5) node {\Large{$\frac{1}{64}$}};
\draw [fill=black] (-6.2,0) circle (1.5pt);
\draw[color=black] (-6.5,-0.5) node {\Large{$-\frac{25}{12}$}};
\draw[color=black] (-6.2,0.5) node {\Large{$\frac{1}{256}$}};
\draw[color=black] (-7.3,-0.5) node {\Large{$\dots$}};
\draw[color=black] (-7.3,0.5) node {\Large{$\dots$}};
\end{scriptsize}
\end{tikzpicture}
\caption{The above nearest neighbor graph in $\mathbb R$ consists of two infinite paths. Above every vertex is denoted its arrival time, and below its position.}
\label{fig 1}
\end{figure}

\subsection{Our results}

In this section, we state our results. Recall that, unless mentioned otherwise, $d$ is a constant independent of $n$ and the constants in the results depend on $d$.
We first state a general concentration result that will be applied to several graph parameters later on. 
Consider the set $\mathcal{LT}_n$ of labeled trees with vertices $[n-1]_0$ rooted at $0$, and endow it with the metric $(T_1, T_2)\mapsto |E(T_1)\,\Delta\, E(T_2)|$ where $\Delta$ is the symmetric difference operator. 

A function $g:\mathcal{LT}_n\to \mathbb R$ is $L$-\emph{Lipschitz} for some $L > 0$ (possibly depending on $n$) if, for every pair $T_1, T_2\in \mathcal{LT}_n$, we have $|g(T_1) - g(T_2)|\le L|E(T_1)\,\Delta\, E(T_2)|$. 
Our first main results shows that Lipschitz functionals of the $d$-NN tree $G_n$ are tightly concentrated around their means. We denote by $\log(\cdot)$ the natural logarithm.

\begin{theorem}\label{thm concentration}
Fix an $L$-Lipschitz function $g:\mathcal{LT}_n\to \mathbb R$ and set $\mu = \mathbb E[g(G_n)]$. Then, there is a constant $C_0 = C_0(d) > 0$ such that, for all sufficiently large $n$ and for every $\phi = \phi(n) \in [C_0 \log n, n/3]$ and $t\ge 0$,
\begin{equation*}
    \mathbb P[|g(G_n) - \mu|\ge t]\le 2\exp\left(- \dfrac{t^2}{20nL^2\phi^2}\right) + \exp\left(-\frac{\phi\log \phi}{10}\right).
\end{equation*}
\end{theorem}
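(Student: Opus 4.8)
The plan is to apply a bounded-differences / Azuma–Hoeffding argument, but with a twist: since a single vertex reposition can in principle change the tree structure drastically (moving one vertex can cause many later vertices to re-attach), the naive Lipschitz bound on the martingale differences is too weak. The standard remedy is to expose the randomness through Definition~\ref{eq def 2} — i.e.\ reveal the i.i.d.\ arrival times $(X_i)$ and the positions $(p_{v_i})$ one vertex at a time — and control the number of edges that change when we resample the data of a single vertex, showing that this number is \emph{typically} small (of order $\phi(n)$) even though it can be large in the worst case. This is exactly the kind of situation handled by the method of bounded differences with a bad event, or by a concentration inequality à la Warnke / Kutin for ``typically Lipschitz'' functions; the second (sub-exponential) term in the bound is the price paid for the bad event where the per-step influence exceeds $\phi(n)$.

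First I would set up the exposure martingale $Z_j = \mathbb E[g(G_n)\mid \mathcal F_j]$, where $\mathcal F_j$ is the $\sigma$-algebra generated by the pairs $(X_i, p_{v_i})$ for the $j$ vertices with the smallest arrival times (this ordering is itself $\mathcal F_n$-measurable, and one can make the filtration precise by first revealing all arrival times). The key structural claim is: conditioned on $\mathcal F_{j-1}$, revealing the $j$-th vertex $v$ changes at most $D_j$ edges of the final tree, where $D_j$ counts $1$ (for the edge from $v$ to its parent) plus the number of vertices arriving after $v$ whose nearest earlier neighbor is affected by the presence/position of $v$. Since $g$ is $L$-Lipschitz, $|Z_j - Z_{j-1}| \le 2 L\, \mathbb E[D_j \mid \mathcal F_{j-1}]$ (the factor $2$ coming from comparing two independent redraws of $v$'s data). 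The crucial probabilistic input is a tail bound of the form $\mathbb P[D_j \ge \phi(n)] \le \exp(-\Omega(\phi(n)\log\phi(n)))$ uniformly in $j$: once a vertex $v$ is placed, the set of later vertices that could possibly attach to $v$ (rather than to a pre-$v$ vertex) is governed by a domination/branching argument — each such vertex lies in a region of $\mathbb T^d_n$ ``claimed'' by $v$, and the expected number of them is $O(1)$ with geometrically decaying tails, so that a union over the (at most) $\phi(n)$-deep cascade yields the stated super-exponential bound. I would make this rigorous by coupling the cascade of re-attachments to a subcritical branching-type process whose offspring distribution has exponential tails, giving $\mathbb P[D_j\ge \phi(n)]\le \exp(-c\,\phi(n)\log\phi(n))$ for $\phi(n)\ge C_d\log n$.

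Granting the tail bound, the argument concludes as follows. Let $\mathcal B = \bigcup_j \{D_j \ge \phi(n)\}$; by the union bound and the tail estimate, $\mathbb P[\mathcal B] \le n\exp(-c\,\phi(n)\log\phi(n)) \le \exp(-\phi(n)\log\phi(n)/10)$ once $C_d$ is large enough (using $\phi(n)\ge C_d\log n$). On the complement of $\mathcal B$, each martingale difference satisfies $|Z_j - Z_{j-1}|\le 2L\phi(n)$, so a stopped/modified Azuma inequality (e.g.\ McDiarmid's inequality with a bad event, or working with the martingale $\tilde Z_j$ obtained by freezing the exposure at the first bad step and using that $g$ is bounded by $n^\alpha$ to absorb the truncation error — here one checks $n^\alpha \cdot \mathbb P[\mathcal B]$ is negligible, or one simply invokes the Kutin–Niyogi type inequality directly) gives
\[
\mathbb P[\,|g(G_n)-\mu|\ge t\,] \le 2\exp\!\left(-\frac{t^2}{2 n (2L\phi(n))^2}\right) + \mathbb P[\mathcal B] = 2\exp\!\left(-\frac{t^2}{8nL^2\phi^2(n)}\right) + \mathbb P[\mathcal B],
\]
and the constant $64$ in the statement leaves comfortable slack for the truncation bookkeeping. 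The main obstacle, and the part deserving the most care, is the tail bound on $D_j$ — i.e.\ showing that the ``sphere of influence'' of a single vertex on all future attachments is typically $O(1)$ with exponentially decaying tails; this requires a genuine geometric argument about how nearest-neighbor assignments propagate (a vertex $v$ can only steal a later vertex $w$ from an earlier vertex $u$ if $w$ is closer to $v$ than to every vertex present before $w$, and such ``Voronoi-stealing'' events are rare and do not chain indefinitely), and it is here that the torus structure and the dimension-dependent constant $C_d$ enter.
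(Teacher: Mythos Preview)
Your high-level plan---typical bounded differences with a bad event---is exactly the paper's strategy, but you have overcomplicated the crucial structural step and missed the geometric input that actually makes it work.

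First, the setup. The paper does not reveal vertices in arrival order; it works directly with the $2n$ \emph{independent} random variables $(U_i)_{i\in[n-1]_0}\cup(X_i)_{i\in[n-1]_0}$ (positions and arrival times from Definition~\ref{eq def 2}) and applies Warnke's typical bounded differences inequality (Theorem~\ref{typical BDI}) off the shelf. This avoids the awkward filtration you describe, where one must first reveal all arrival times to even define the exposure order.

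Second, and more importantly: there is \emph{no cascade}. If $(\mathbf{u}_1,\mathbf{x}_1)$ and $(\mathbf{u}_2,\mathbf{x}_2)$ differ only in the $k$-th coordinate (the position or arrival time of a single vertex $v$), then every edge in $E(T_1)\Delta E(T_2)$ is incident to $v$. The parent of any other vertex $u$ depends only on the data of vertices with smaller arrival time than $u$; if that parent changes when $v$ is modified, it is because $v$ was or becomes the parent of $u$. There is no chain reaction, no ``$\phi(n)$-deep cascade'', and no need for a branching comparison. The typical Lipschitz bound therefore reduces to bounding $\deg_{T_1}(v)+\deg_{T_2}(v)$, i.e.\ the degree of the resampled vertex in \emph{both} trees.

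This is why the good event $\Gamma$ in the paper is slightly stronger than ``$\Delta(G_n)\le\phi(n)$'': it requires that for every $v$ and every point $x\in\mathbb T^d_n$, a vertex planted at $x$ arriving before all of $V(G_n)\setminus v$ would have degree at most $\phi(n)$---this handles the degree in the modified tree as well. The bound $\mathbb P(\overline\Gamma)\le \exp(-\phi(n)\log\phi(n)/5)$ (Observation~\ref{ob sec 4}) is purely geometric, via the cubic net of Section~\ref{subsec cubic nets}: any vertex has at most one child per outer cube of $N(x,n^{1/d}/2)$, giving $O_d(\log n)$ children outside the central cube, and with the claimed probability no axis-parallel cube of side $1/2$ contains $\phi(n)/2$ vertices. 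The condition $\phi(n)\ge C_d\log n$ and the $\log\phi(n)$ in the error term come from this Poisson-tail count, not from any branching estimate. So the part you flagged as ``the main obstacle'' dissolves once you observe that resampling one vertex's data changes only edges incident to that vertex.
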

 
Theorem~\ref{thm concentration} has interesting consequences.
For example, for every $i\ge 1$, denote the set of vertices of degree at least $i$ in $G_n$ by $V_{\ge i}$.
Then, we show that, for every $i\ge 1$ that is ``not too big'', $|V_{\ge i}|$ is tightly concentrated around its expected value. 
Apart from concentration, we also show that $|V_{\ge i}|$ decreases exponentially with $i$.

\begin{theorem}\label{thm main 1}
There are constants $C_1 = C_1(d) > 0$, $C_2 = C_2(d) > C_1$ and $c = c(d) > 0$ such that a.a.s., for every $i\le c\log n$, we have $n\exp(-C_2 i)\le |V_{\ge i}|\le n\exp(-C_1 i)$ and $|V_{\ge i}| = (1+o(1))\mathbb E |V_{\ge i}|$. 
\end{theorem}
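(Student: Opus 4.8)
The plan is to combine a first–moment estimate for $|V_{\ge i}|$ with the concentration inequality of Theorem~\ref{thm concentration}. I would work throughout with the Poisson model $\PoNN$ (transfer to $G_n$ being routine) under Equivalent Definition~\ref{eq def 2}, so that the vertices together with their arrival times form a unit–intensity Poisson process on $\mathbb T^d_n\times[0,1]$, locally indistinguishable from one on $\mathbb R^d\times[0,1]$. Writing $\deg^+(v)$ for the number of children of $v$ (so $\deg(v)=\deg^+(v)+1$ off the single globally first vertex), the Mecke equation gives
\[
\mathbb E\,|V_{\ge i}| \;=\; \sum_v \mathbb P[\deg(v)\ge i]\;=\;(1+o(1))\,n\int_0^1\mathbb P\big[\deg(v)\ge i\,\big|\,X_v=x\big]\,dx,
\]
and a second use of Mecke shows that a point $u$ at distance $r$ from $v$ arriving at time $y>x$ is a child of $v$ with probability at most $e^{-\kappa_d r^d y}$, where $\kappa_d=\mathrm{Vol}(B_d(1))$; integrating this yields $\mathbb E[\deg^+(v)\mid X_v=x]=\int_x^1\frac1y\,dy=\ln(1/x)$. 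I will use the identity $\int_0^1(\ln(1/x))^m\,dx=m!$ repeatedly.

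For the \emph{upper bound} on $\mathbb E\,|V_{\ge i}|$ the key observation is geometric: if $w'$ is a child of $v$ and $w$ is a child of $v$ that arrived earlier, then $v$ is the nearest earlier neighbour of $w'$ and $w$ is already present, so $|p_{w'}-p_v|<|p_{w'}-p_w|$. Fixing $s=\tfrac12$, $F=4$, and a partition of $S^{d-1}$ into $N_d=O(2^d)$ cells of relative measure $\le 2^{-d-1}$, the inequality above forces that inside a fixed cone over one cell and a fixed multiplicative window $(\rho/F,\rho]$ of distances to $v$ there are at most $(2F+1)^d=O_d(1)$ children (they lie in $B(p_v,\rho)$ and are pairwise more than $\rho/F$ apart). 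Hence $\deg^+(v)\ge i-1$ forces, in the fullest cone, the existence of $\ell^\ast=\Omega_d(i)$ children $w^{(1)},\dots,w^{(\ell^\ast)}$ whose distances $\rho_1,\dots,\rho_{\ell^\ast}$ to $v$ are pairwise separated by a factor $\ge F$, each witnessed by the emptiness of the space–time cylinder $B(p_{w^{(a)}},\rho_a)\times[0,y_a)$; for such a separated tuple the concentric balls $B(p_{w^{(a)}},s\rho_a)$ are pairwise disjoint (this is where $s<\tfrac{F-1}{F+1}$ is used). Feeding this into Mecke, bounding $e^{-|\bigcup_a B(p_{w^{(a)}},\rho_a)\times[0,y_a)|}\le\prod_a e^{-\kappa_d s^d\rho_a^d y_a}$, relaxing the separation to a mere ordering of the $\rho_a$'s and extracting $\tfrac1{\ell^\ast!}$, I would get
\[
\mathbb P\big[\deg^+(v)\ge i-1\,\big|\,X_v=x\big]\;\le\; N_d\,\frac{1}{\ell^\ast!}\Big(\int_{\mathrm{cone}}\int_x^1 e^{-\kappa_d s^d|p|^d y}\,dy\,dp\Big)^{\ell^\ast}\;=\;N_d\,\frac{\big(\beta^\ast s^{-d}\ln(1/x)\big)^{\ell^\ast}}{\ell^\ast!}
\]
with $\beta^\ast s^{-d}\le\tfrac12$; integrating over $x\in[0,1]$ and using $\int_0^1(\ln(1/x))^{\ell^\ast}dx=\ell^\ast!$ then gives $\mathbb E\,|V_{\ge i}|\le nN_d(\beta^\ast s^{-d})^{\ell^\ast}\le ne^{-C_1' i}$ for a suitable $C_1'=C_1'(d)>0$ and all $i$ in the relevant range.

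For the \emph{lower bound} on $\mathbb E\,|V_{\ge i}|$ I would run a Paley–Zygmund argument at a small arrival time. Using the crude bound $\mathbb P[u,w\text{ both children of }v]\le e^{-|A_u\cup A_w|}\le e^{-\frac12|A_u|-\frac12|A_w|}$, with $A_u=B(p_u,|p_u-p_v|)\times[0,X_u)$, one gets $\mathbb E[(\deg^+(v))_{(2)}\mid X_v=x]\le 4(\ln(1/x))^2$, hence $\mathbb E[(\deg^+(v))^2\mid X_v=x]\le 5(\ln(1/x))^2$ once $\ln(1/x)\ge1$, so Paley–Zygmund yields $\mathbb P[\deg^+(v)\ge\tfrac12\ln(1/x)\mid X_v=x]\ge\tfrac1{20}$. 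Taking $x\le e^{-2i}$ then forces $\deg(v)\ge i$ with probability $\ge\tfrac1{20}$, so $\mathbb E\,|V_{\ge i}|\ge(1-o(1))\tfrac{n}{20}e^{-2i}\ge ne^{-C_2' i}$ with, say, $C_2'=3$. Finally, $T\mapsto|V_{\ge i}(T)|$ is $2$-Lipschitz on $\mathcal{LT}^0_n$ with values in $[0,n]$, so Theorem~\ref{thm concentration} with $\phi(n)=C_d\log n$ and $t=\mathbb E\,|V_{\ge i}|/\log n$ gives $|V_{\ge i}|=(1+o(1))\mathbb E\,|V_{\ge i}|$ with failure probability $o(1/\log n)$ as soon as $\mathbb E\,|V_{\ge i}|\gg\sqrt n\log^3 n$, which holds for all $i\le c\log n$ once $c<\tfrac1{2C_2'}$; a union bound over these $O(\log n)$ values of $i$, combined with the two expectation estimates and the choices $C_1<C_1'$, $C_2>C_2'$ (taking $C_2$ large enough to absorb the constant $\tfrac1{20}$ for the finitely many small $i$, using that the number of non-leaves is $\Theta(n)$), yields the theorem. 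I expect the main obstacle to be the upper bound: one must choose $s,F,N_d$ so that the per-child cost $\beta^\ast s^{-d}$ is genuinely below $1$ after the exact cancellation with $\int_0^1(\ln(1/x))^m\,dx=m!$, prove the packing bound inside one cone and window, verify the disjointness of the shrunk balls with these parameters, and handle cleanly the passage between $\PoNN$, the torus, and the $\mathbb R^d$ picture.
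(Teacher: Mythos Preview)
Your proposal is essentially correct and takes a genuinely different route from the paper's proof, so a short comparison is warranted.

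\textbf{Upper bound.} The paper never computes $\mathbb P[\deg(v)\ge i\mid X_v=x]$ directly. Instead it introduces \emph{cubic nets} $N(v,\ell)$ (Subsection~\ref{subsec cubic nets}): a multiscale grid around $v$ such that each outer cube can contain at most one child of $v$, and if every boundary cube already hosts an earlier vertex then $v$ has no child outside $W(v,\ell)$. The work then goes into bounding, for $v$ in the dyadic label group $[2^{(i-1)d},2^{id})$, the smallest $\ell(v)$ for which this boundary condition holds (Observation~\ref{ob 2.5}, Corollary~\ref{cor technical}), and separately controlling children inside the unit cube around $v$ (Observation~\ref{ob close neighbors}). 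Your argument replaces all of this with a direct Palm/Mecke first-moment bound based on the packing observation that children of $v$ in a fixed cone and a fixed multiplicative distance window are $\Omega(\rho)$-separated, forcing $\ell^\ast=\Omega_d(i)$ of them to lie at pairwise $F$-separated radii; the key identity $\int_0^1(\ln(1/x))^m\,dx=m!$ then cancels the $1/\ell^\ast!$ exactly and leaves a geometric factor $(\beta^\ast s^{-d})^{\ell^\ast}$. Your approach is shorter and more analytic; the paper's cubic-net machinery is heavier but is reused verbatim for the maximum degree (Corollary~\ref{cor max degree}) and for local finiteness of $G_\infty$ (Theorem~\ref{thm G infinity}\eqref{item loc finite}), so it earns its keep elsewhere.

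\textbf{Lower bound.} The paper again argues groupwise: for each pair of dyadic groups it shows, via control of Voronoi cells (Lemmas~\ref{lem B_i,j holds}--\ref{lem accumulating neighbors}), that a fixed fraction of the earlier group receives at least one child from the later one, and then a pigeonhole over the $i$ later groups gives $\Theta_d(i)$ children for the richest $\Theta_d(n e^{-\Theta_d(i)})$ early vertices. Your Paley--Zygmund route, based on the clean identity $\mathbb E[\deg^+(v)\mid X_v=x]=\ln(1/x)$ and the crude second-moment bound, is more elementary and gives the same exponential shape with less geometry. (Do note that the second-moment bound $e^{-|A_u\cup A_w|}\le e^{-\frac12|A_u|-\frac12|A_w|}$ requires $u,w$ both children of $v$ \emph{in the process with $u,w,v$ added}; this still implies emptiness of the shrunk cylinders for the underlying PPP, so Mecke goes through as you wrote.)

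\textbf{Concentration.} Here your argument and the paper's (Corollary~\ref{cor leaves}) coincide: $|V_{\ge i}|$ is $2$-Lipschitz on $\mathcal{LT}^0_n$ and Theorem~\ref{thm concentration} applies. Your parameter choices match the paper's up to inessential powers of $\log n$; the restriction $c<1/(2C_2')$ you identify is exactly the mechanism that fixes $c=c(d)$.

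Two small points to tidy in a full write-up: (i) to extract pairwise $F$-separated radii from the occupied windows you need to skip every other window, halving $\ell^\ast$ (you implicitly absorb this into $\Omega_d(i)$, which is fine); (ii) the upper bound $|V_{\ge i}|\le n e^{-C_1 i}$ is only meaningful for $i$ above a dimension-dependent threshold (e.g.\ $i\ge 2$), and the paper is equally informal about this.
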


Clearly, Theorem~\ref{thm main 1} provides a lower bound on the maximum degree of $G_n$. In fact, the order of this lower bound is the correct one, as the next theorem shows.
\begin{theorem}\label{cor:maxdegree}
$\Delta(G_n)=\Theta(\log n)$ with probability $1-n^{-\omega(1)}$.
\end{theorem}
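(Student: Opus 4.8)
The plan is to prove the matching bounds $\Delta(G_n)=\Omega_d(\log n)$ and $\Delta(G_n)=O_d(\log n)$ separately, each with failure probability $n^{-\omega(1)}$, and then take a union bound over the two failure events.

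For the \emph{lower bound} I would combine Theorem~\ref{thm main 1} with the concentration inequality of Theorem~\ref{thm concentration}. Fix $i_0=c_0\log n$ for a small constant $c_0=c_0(d)>0$, chosen so that $c_0\le c$ and $n\exp(-C_2 i_0)\ge n^{2/3}$. Theorem~\ref{thm main 1} then gives that a.a.s.\ $|V_{\ge i_0}|\ge n^{2/3}$, whence $\mu:=\mathbb E|V_{\ge i_0}|\ge \tfrac12 n^{2/3}$ for $n$ large. The functional $g=|V_{\ge i_0}|$ on $\mathcal{LT}^0_n$ is $2$-Lipschitz (a single edge swap changes the degrees of at most four vertices, each by at most $1$) and takes values in $[0,n]$, so Theorem~\ref{thm concentration} applied with $\phi(n)=C_d\log n$ and $t=\mu/2$ yields $\mathbb P[|V_{\ge i_0}|=0]\le \mathbb P[|g-\mu|\ge \mu/2]\le 2\exp(-\Omega_d(n^{1/3}/\log^2 n))+\exp(-\Omega_d(\log n\log\log n))=n^{-\omega(1)}$. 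On the complementary event some vertex has degree at least $i_0$, so $\Delta(G_n)\ge c_0\log n$ with probability $1-n^{-\omega(1)}$.

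For the \emph{upper bound} I would argue geometrically in the Poisson model $\PoNN$ (then transfer to $G_n$ by the usual comparison). Fix a vertex $v$ at position $p_v$; the crux is the \emph{deterministic} claim that $v$ has at most $A_d\log n$ children at torus-distance in $[1,\diam(\mathbb T^d_n)]$ from $p_v$, for some constant $A_d=A_d(d)$. Put $\rho=\diam(\mathbb T^d_n)=\Theta_d(n^{1/d})$ and split $[1,\rho]$ into the $\Theta_d(\log n)$ dyadic scales $(\rho 2^{-k-1},\rho 2^{-k}]$ for $0\le k\le \lceil\log_2\rho\rceil$. Cover the annulus $A_k=\{q:\rho 2^{-k-1}<d(q,p_v)\le \rho 2^{-k}\}$ by $M_k$ balls of radius $\rho 2^{-k-2}$; one checks that $M_k=O_d(1)$ uniformly in $k$ and $n$, since one is always covering a set of volume at most $n$ by balls of volume $\Omega_d(\min((\rho 2^{-k})^d,n))$. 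If a vertex $w$ lands in $A_k$ inside a covering ball $B_m$, then every point of $B_m$ lies within $2\cdot\rho 2^{-k-2}=\rho 2^{-k-1}<d(p_w,p_v)$ of $p_w$, so $B_m$ lies strictly inside the ball $B(p_w,d(p_w,p_v))$; hence if $w$ is a child of $v$, then no vertex of $B_m$ arrives before $w$, i.e.\ $w$ is the earliest-arriving vertex of $B_m$. As each $B_m$ contains at most one earliest vertex, $v$ has at most $M_k$ children at scale $k$, and summing over the $\Theta_d(\log n)$ scales proves the claim. It remains to bound the children at torus-distance less than $1$ from $p_v$: their number is at most the number of vertices within distance $1$ of $p_v$, which is distributed as $\mathrm{Po}(\Vol(B_d(1)))$, so by a Poisson upper-tail estimate $\mathbb P[\text{this exceeds }\log n]\le n^{-\omega(1)}$. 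A union bound over the at most $2n$ vertices (on the likely event $N\le 2n$) then yields $\Delta(\PoNN)\le (A_d+2)\log n$ with probability $1-n^{-\omega(1)}$, and the same holds for $G_n$. Together with the lower bound this completes the proof.

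The step I expect to be the main obstacle is making the geometric upper-bound argument precise on the torus: choosing the dyadic decomposition and covers so that the covering numbers are genuinely $O_d(1)$ uniformly in the scale — in particular for the coarsest scales, where the balls involved wrap around $\mathbb T^d_n$ — and carefully justifying the ``at most one child per covering ball'' reduction; alongside this, one must ensure that every per-vertex failure probability is super-polynomially small (as the Poisson tail and the tail in Theorem~\ref{thm concentration} both are) so that it survives the union bound over $\Theta(n)$ vertices.
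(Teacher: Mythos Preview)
Your proof is correct and close in spirit to the paper's, with two minor differences worth noting.

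\textbf{Upper bound.} Your dyadic-annulus argument is essentially a repackaging of the paper's cubic-net argument (Corollary~\ref{cor max degree} and Observation~\ref{ob distance}): both are multi-scale decompositions of the space around $v$ into $O_d(\log n)$ regions, each of which can contain at most one child of $v$ by the triangle inequality. The paper's cubic net is a concrete nested-cube construction; your annuli-plus-ball-covers are the more standard packaging of the same idea. Your worry about the coarsest scales on the torus is easily dispatched: a torus ball of radius $r$ contains an axis-parallel cube of side $2r/\sqrt d$, so the whole torus (and a fortiori any annulus) is covered by $O_d(1)$ balls of radius $\rho/4$, uniformly in $n$. Also, the detour through $\PoNN$ is unnecessary: the annulus count is deterministic, and the number of vertices in $B_d(p_v,1)$ is $\mathrm{Bin}(n-1,|B_d(1)|/n)$ in $G_n$, whose upper tail past $\log n$ is already $n^{-\omega(1)}$ by~\eqref{chern}.

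\textbf{Lower bound.} Here you take a genuinely different (and more black-box) route than the paper. The paper does not use Theorem~\ref{thm concentration}; instead, it reads off the explicit failure probability $k\exp(-\Omega_d(2^{(k-i)d}))$ from the proof of the lower bound in Theorem~\ref{thm main 1} (via Lemmas~\ref{lem B_i,j holds} and~\ref{lem accumulating neighbors}), and then takes $i$ to be a small constant multiple of $k$ so that this is $\exp(-n^{\Omega_d(1)})$. Your approach---use Theorem~\ref{thm main 1} only to certify $\mathbb E|V_{\ge i_0}|\ge n^{2/3}/2$, then invoke Theorem~\ref{thm concentration} with $\phi(n)=C_d\log n$---works equally well and has the virtue of treating Theorem~\ref{thm main 1} as a black box; the paper's route gives a stronger failure probability ($\exp(-n^{\Omega(1)})$ rather than $\exp(-\Omega(\log n\log\log n))$) but requires opening up the proof.
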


The next result deals with the typical height of a vertex as well as the height and the diameter of $G_n$. In particular, it shows that the first order term of these quantities does not depend on the dimension.

\begin{theorem}\label{thm diam}
Fix $\varepsilon > 0$ and let $v$ be a uniformly chosen vertex of $G_n$. Then, a.a.s.\
\begin{enumerate}[(i)]
    \item\label{line a}$(1-\varepsilon)\log n\le h(v, G_n)\le (1+\varepsilon) \log n$,
    \item\label{line b}$(2e-\varepsilon)\log n\le \mathrm{diam}(G_n)\le 2 h(G_n)\le (2e+\varepsilon) \log n$.
\end{enumerate}
\end{theorem}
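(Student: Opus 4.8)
The plan is to reduce every part of the statement to the behaviour of one \emph{ancestral line} in the Poisson model $\PoNN$, and to transfer to $G_n$ at the end by a routine de-Poissonization (since $N\sim\mathrm{Po}(n)$ concentrates and $\log N=(1+o(1))\log n$ a.a.s.). The engine will be the following observation, proved by Palm calculus for the $\mathrm{PPP}$ on $\mathbb T^d_n\times[0,1]$ with product intensity: conditionally on a vertex $v$ at a fixed position with arrival time $x$, let $v=w_0,w_1,w_2,\dots$ be its ancestors ($w_{j+1}=\mathrm{parent}(w_j)$) with arrival times $x=X_0>X_1>X_2>\cdots$. Revealing $w_{j+1}$ amounts to finding the spatially nearest point to $p_{w_j}$ among points of arrival time $<X_j$; the region still available at that step is a purely spatial set (the torus minus the balls carved out at earlier steps) times the window $[0,X_j)$, hence has product structure, so conditionally on the \emph{location} of $w_{j+1}$ and on all earlier information, the arrival time of $w_{j+1}$ is uniform on $[0,X_j)$. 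Thus $X_j=x\prod_{i=1}^j U_i$ with $(U_i)$ i.i.d.\ $\Un$, i.e.\ $-\log X_j=-\log x+\Gamma_j$ with $\Gamma_j$ a sum of $j$ i.i.d.\ $\mathrm{Exp}(1)$ variables (one may extend the line past its true length by fresh uniforms). Since the path from any vertex ends at the root, the unique point of minimal arrival time, and $X_{\min}=n^{-1+o(1)}$ a.a.s., $h(v,G_n)$ is essentially the number of steps the random walk $-\log X_j$ needs to cross $\log n$.

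This already yields part~(\ref{line a}) and the upper bound in part~(\ref{line b}). A uniformly chosen $v$ has $x\sim\Un$, so a.a.s.\ $-\log x\le\varepsilon\log n$. For the lower bound, with $k=(1-\varepsilon)\log n$ the variable $\Gamma_{k-1}$ concentrates around $(1-\varepsilon)\log n$, so a.a.s.\ $X_{k-1}\ge n^{-1+\varepsilon/2}\gg X_{\min}$, the line has not yet reached the root, and $h(v,G_n)\ge k$. For the upper bounds, $h(v,G_n)\ge k$ forces the $(k-1)$-st ancestor not to be the root, hence some point has arrival time $<X_{k-1}$ — an event of conditional probability at most $\min(1,nX_{k-1})$; so $\mathbb P[h(v,G_n)\ge k]\le\mathbb E\bigl[\min\bigl(1,nx\prod_{i=1}^{k-1}U_i\bigr)\bigr]$, which by concentration of $\Gamma_{k-1}$ (a dyadic decomposition handles the range where the product is $<1/n$) is $n^{-\Omega_\varepsilon(1)}$ for $k=(1+\varepsilon)\log n$, and $n^{-I(e+\varepsilon)+o(1)}$ for $k=(e+\varepsilon)\log n$, where $I(\alpha)=1-\alpha+\alpha\log\alpha$ is the Poisson rate function. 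Averaging over $v$ gives part~(\ref{line a}); and since $I(e)=1$ with $I'(e)=1>0$, a first-moment bound $\mathbb E\,\#\{v:h(v,G_n)\ge(e+\varepsilon)\log n\}\le n^{\,1-I(e+\varepsilon)+o(1)}=n^{-\Omega_\varepsilon(1)}\to0$ shows $h(G_n)\le(e+\varepsilon)\log n$ a.a.s., after which $\diam(G_n)\le2h(G_n)$ is immediate since every vertex is within $h(G_n)$ of the root. (The constant $e$ appears precisely because it solves $I(\alpha)=1$.)

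The hard part is the lower bound $\diam(G_n)\ge(2e-\varepsilon)\log n$. First I would establish the matching height bound $h(G_n)\ge(e-\varepsilon)\log n$: the same first-moment computation run in reverse gives $\mathbb E\,\#\{v:h(v,G_n)\ge(e-\varepsilon)\log n\}\ge n^{\,1-I(e-\varepsilon)-o(1)}=n^{\Omega_\varepsilon(1)}\to\infty$ (now $I(e-\varepsilon)<1$), and I would promote this to an a.a.s.\ statement by a second-moment estimate on $Z:=\#\{v:h(v,G_n)\ge(e-\varepsilon)\log n\}$: for $u,v$ with $h(\mathrm{lca}(u,v))=\ell$ the two ancestral lines coincide up to the lca and then run over disjoint time windows, and — here the geometry enters — conditionally on the split they are almost independent because the balls carved along one line are typically disjoint from the other line's neighbourhood; summing over $\ell$ should give $\mathbb E[Z^2]=(1+o(1))(\mathbb E Z)^2$, whence $Z>0$ a.a.s.

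To gain the factor $2$ I would decompose $V(G_n)$ according to which of the first $K=K(n)$ vertices (with $K\to\infty$ and $\log K=o(\log n)$) appears first on the path to the root; this splits $V(G_n)$ into at most $K$ parts, each supported on roughly a Voronoi-type cell of those $K$ points, and a.a.s.\ at least two of the parts carry $\Omega(n/K)$ vertices spread over a region of area $\Omega(n/K)$ and have the property that the corresponding tree lowest common ancestor is the root. Running the height lower bound inside each such part then produces two vertices at depth $\ge(e-\varepsilon-o(1))\log n$ with lowest common ancestor the root, hence at graph distance $\ge(2e-\varepsilon)\log n$; with $\diam(G_n)\le2h(G_n)\le(2e+\varepsilon)\log n$ this completes part~(\ref{line b}). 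The two genuinely delicate points are (a) the correlation control in the second moment — showing two ancestral lines in the torus are essentially independent once they diverge, despite the carved-out ``holes'' — and (b) checking that a part of $G_n$ produced by this decomposition behaves, for the height lower bound, like a $d$-NN tree on $\Theta(n/K)$ vertices; I expect (a) to be the main obstacle.
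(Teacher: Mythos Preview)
Your treatment of part~(\ref{line a}) and the upper bound in part~(\ref{line b}) is correct and matches the paper's proof essentially line for line: the paper also reduces to the product of i.i.d.\ uniforms along the ancestral line (it phrases this via labels rather than arrival times, using that the parent of the vertex with label $i$ is uniform in $[i-1]_0$, but this is the same mechanism), and then invokes the Gamma/Erlang tail bounds to get the typical depth $(1\pm\varepsilon)\log n$ and the union bound for $h(G_n)\le(e+\varepsilon)\log n$.

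For the lower bound $\diam(G_n)\ge(2e-\varepsilon)\log n$, however, the paper takes a route that is structurally quite different from yours, and the difference is precisely designed to sidestep the obstacle you flag in~(a). You are right that the second moment is the crux: in the $d$-NN tree the \emph{joint} law of two ancestral label sequences is \emph{not} that of the random recursive tree (only the marginals coincide), because the parent choices of different vertices are correlated through the common geometry. Your heuristic that the carved balls along two lines are ``typically disjoint'' is exactly the kind of localization one would need---but making it quantitative already requires something like the paper's Theorem~\ref{thm 4.1}, and even then controlling the pair probabilities $\mathbb P(h(u)\ge k,\,h(v)\ge k)$ down to the $(1+o(1))$ factor needed for Chebyshev is not obviously feasible. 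Your step~(b) is also not yet a proof: the subtrees produced by the first-$K$-vertex decomposition are not $d$-NN trees on $\Theta(n/K)$ points (they live in Voronoi-like regions whose shapes are random and whose boundaries affect parent choices), so you cannot simply rerun the height lower bound inside them.

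The paper avoids both issues by a renormalization argument that manufactures independence geometrically rather than probabilistically. It first proves a \emph{localization} result (Theorem~\ref{thm 4.1} / Corollary~\ref{cor PoNN}): the Euclidean length of the path from any vertex to its first ancestor with arrival time $\le t$ is a.a.s.\ at most $(\log(nt))^{O(1)}/t^{1/d}$. It then partitions arrival times into $k$ windows (colours $c_1,\dots,c_{k+1}$) and space into a hierarchy of tessellations $\mathcal T_1\prec\cdots\prec\mathcal T_{k-2}$ with $|\mathcal T_i|$-cubes of volume roughly $n^{i/k}$. At scale $i$, inside each cube $q_i$ one starts from a vertex of colour $c_i$ and follows its ancestral line until it reaches colour $c_{i+1}$ or later; localization forces this segment to stay inside $q_i$, and the same Gamma-tail computation you use gives probability $\ge n^{-1/k+\delta}$ that this segment has length $\ge(e-\varepsilon)(\log n)/k$ (Lemma~\ref{lem stoppage}). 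Because the cubes are disjoint and the relevant PPPs are independent across them, a Chernoff bound over the $\gg n^{1/k-\delta}$ cubes in $\mathcal T_i$ shows many succeed; one then glues successful segments across the $k-3$ scales to obtain a single path of length $\ge(e-\varepsilon)(k-3)(\log n)/k$. The factor $2$ comes for free: run the construction in two disjoint cubes of the coarsest tessellation $\mathcal T_{k-1}$, giving two disjoint long paths whose concatenation (through the root) has length $\ge(2e-\varepsilon)\log n$. So where you would need a delicate second-moment correlation estimate and a reduction of the decomposed parts to genuine $d$-NN trees, the paper substitutes spatial disjointness of cubes (giving exact independence of the PPPs inside them) together with the localization lemma (guaranteeing the paths respect that disjointness).
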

\noindent
We remark that, as a byproduct of the proof of Theorem~\ref{thm diam}, we show that, for two uniformly chosen vertices of $G_n$, their \emph{stretch factor} (that is, the sum of the Euclidean distances of all edges on the path between them, divided by the Euclidean distance between them) is a.a.s.\ $O(1)$. 

Finally, we consider the infinite setup mentioned in the previous section. First, we recall the classical notion of local convergence: consider a sequence of rooted graphs $(H_n,o_n)_{n\ge 1}$ where, for every $n\ge 1$, $o_n$ is a uniformly chosen vertex of $H_n$.
Also, for a positive integer $r\ge 0$, a graph $G$ and a vertex $v$ in $G$, denote by $B^r_G[v]$ the graph, induced by $N^r_G[v]$ in $G$ (also known as \emph{the $r$-ball around $v$ in $G$}).
Given a probability measure $\mu$ on the set of finite rooted graphs, we say that $(H_n,o_n)_{n\ge 1}$ converges locally to $\mu$ if for every integer $r\ge 0$ and every finite rooted graph $(H,o_H)$, we have that 
\begin{equation*}
\mathbb P(B^r_{H_n}[o_n] \cong (H,o_H))\xrightarrow[n\to \infty]{} \mu(\{(G, o_G): B^r_G[o_G] \cong (H, o_H)\}),
\end{equation*}
that is, the distribution of the $r$-ball around $o_n$ in $H_n$ converges to the distribution of the $r$-ball around the root of a graph sampled according to $\mu$.
Often the limiting measure $\mu$ is identified with a (possibly random) graph. For a complete account and applications on the notion of local convergence, see for example~\cite{AldousSteele, BenjaminiSchramm, Sal11}. 
We also recall that a graph is \emph{locally finite} if it has no vertices of infinite degree.

\begin{theorem}\label{thm G infinity}
\begin{enumerate}[(i)]
    \item\label{item local limit} $G_{\infty}$ is the local limit of $(G_n)_{n\ge 1}$.
    \item\label{item loc finite} A.s.\ $G_{\infty}$ is locally finite.
    \item\label{item recurrence} A.s.\ $G_{\infty}$ is recurrent for the simple random walk.
    \item\label{item connected} A.s.\ $G_{\infty}$ is connected.
\end{enumerate}
\end{theorem}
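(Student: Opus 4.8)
\textbf{Part \ref{item loc finite}.} I treat the four parts in the order \ref{item loc finite}, \ref{item local limit}, \ref{item connected}, \ref{item recurrence}, since the last two rest on the first two. Every vertex of $G_\infty$ has out-degree exactly $1$, so only in-degrees need to be controlled. By the Mecke equation, for any bounded Borel set $K\subseteq\mathbb R^d$,
\begin{equation*}
\mathbb E\big[\#\{v\in\mathcal V\cap K:\deg_{G_\infty}(v)=\infty\}\big]=\int_K\mathbb P\big[\text{the point added at }x\text{ has infinite in-degree}\big]\,dx .
\end{equation*}
Translating to the origin and using Mecke once more, the expected in-degree of a point $o$ placed at $0$ (with arrival time $X_o\sim\Un$ and an independent surrounding $\mathrm{PPP}(1)$) equals
\begin{equation*}
\int_{\mathbb R^d}\int_0^1\mathbb P[X_o<z]\;\mathbb P\big[\mathcal V\cap B_d(0,|u|)\text{ has no point with arrival time }<z\big]\,dz\,du=\int_{\mathbb R^d}\int_0^1 z\,e^{-z\,\omega_d|u|^d}\,dz\,du=1,
\end{equation*}
with $\omega_d=\Vol(B_d(1))$; the inner integrand in $u$ is $\Theta(1)$ for bounded $|u|$ and $\Theta(|u|^{-2d})$ as $|u|\to\infty$, so the integral converges. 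Hence $\deg_{G_\infty}(o)<\infty$ a.s.\ (and $\mathbb E\deg_{G_\infty}(o)=2$, as expected for a tree), the first display vanishes, and covering $\mathbb R^d$ by countably many bounded sets gives that a.s.\ every vertex of $G_\infty$ has finite degree. A slightly longer Mecke computation of the same type shows that all moments of $\deg_{G_\infty}(o)$ are finite.

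\textbf{Part \ref{item local limit}.} Since $\PoNN$ differs from $G_n$ only in having $\mathrm{Po}(n)$ instead of $n$ vertices, the two share a local limit, so I work with $\PoNN$. By translation invariance of $\mathrm{PPP}(1)$ on $\mathbb T^d_n$, a uniformly chosen vertex of $\PoNN$ is distributed as the origin under the Palm measure --- a point at $0$, a $\mathrm{PPP}(1)$ on a cube of volume $n$, and i.i.d.\ $\Un$ arrival times --- which on every fixed Euclidean window converges in distribution, as $n\to\infty$, to the Palm version of Definition~\ref{def inf PNNT}. To upgrade this to Benjamini--Schramm convergence I would show that for each $r$ and $\varepsilon>0$ there is $R=R(r,\varepsilon)$ such that, with probability at least $1-\varepsilon$ uniformly in large $n$, the graph ball $N^r_{G_\infty}[o]$ lies inside $B_d(0,R)$ and is a measurable function of the point configuration inside $B_d(0,2R)$; on that event the obvious coupling of the two configurations forces the $r$-balls to agree. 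The containment follows from tightness of $|N^r_{G_\infty}[o]|$ (a consequence of all moments of $\deg_{G_\infty}(o)$ being finite, together with $G_\infty$ being a forest) and from the tail $\Theta(\ell^{-d})$ of the Euclidean length of an edge; the measurability statement requires, in addition, discarding the rare event that some vertex of $B_d(0,R)$ has its nearest earlier neighbour outside $B_d(0,2R)$.

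\textbf{Part \ref{item connected}.} First, $G_\infty$ is a.s.\ a forest: in a cycle, every edge is the out-edge of exactly one of its endpoints, which would force the cycle to be coherently oriented along strictly decreasing arrival times, a contradiction. For connectedness I would use a multi-scale (renormalisation) argument. At scale $k$ one shows that, with probability at least $1-\varepsilon_k$ where $\sum_k\varepsilon_k<\infty$, every box of side $2^k$ in a fixed tiling of $\mathbb R^d$ contains a leader --- a vertex $w$ whose arrival time is so small that the ancestral ray of every vertex of a concentric larger box passes through $w$ before it exits that larger box --- and that every scale-$k$ leader is ancestrally attracted to a scale-$(k+1)$ leader. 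Borel--Cantelli over scales then yields that a.s.\ the ancestral rays of any two vertices coalesce, so $G_\infty$ has a single component; with the forest property, $G_\infty$ is a.s.\ an infinite tree. I expect this to be the main obstacle of the theorem: the leader thresholds and box sizes must be tuned so that the failure probabilities are summable while still guaranteeing \emph{inward} rather than outward attraction.

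\textbf{Part \ref{item recurrence}.} By Parts \ref{item local limit} and \ref{item connected}, $G_\infty$ is a.s.\ an infinite tree and $(G_\infty,o)$ is unimodular, being a Benjamini--Schramm limit of finite graphs. I would derive recurrence from one-endedness. The crucial point is that the descendant subtree $T_o$ of $o$ is a.s.\ finite; equivalently, since $G_\infty$ is locally finite, that $T_o$ has a.s.\ bounded height, and for each fixed $K$ the event $\{h(T_o)>K\}$ is a local event, the limit of the events $\{h(T_V)>K\}$ for the subtree $T_V$ rooted at a uniformly chosen vertex $V$ of $G_n$. A first-moment estimate in $G_n$, in the spirit of the random-recursive-tree identity $\mathbb E|T_i|=n/i$ for the subtree of the $i$-th arrival (here only up to a constant), gives $\mathbb P[h(T_V)>K]\le C(1+\log K)/K$ uniformly in $n$, whence $T_o$ is a.s.\ finite. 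By the mass-transport principle this passes to every vertex, so the connected tree $G_\infty$, having all descendant subtrees finite, has exactly one end; as one-ended --- hence finitely-ended --- trees have infinite effective resistance to infinity, $G_\infty$ is a.s.\ recurrent. Alternatively, sidestepping the finite-model estimate, one may invoke the Gurel-Gurevich--Nachmias recurrence theorem for unimodular planar graph limits, using that each $G_n$ is a tree, hence planar, and that Theorem~\ref{thm main 1} controls its degrees.
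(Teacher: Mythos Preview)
Your arguments for \ref{item loc finite} and \ref{item recurrence} are correct and take genuinely different routes from the paper. For \ref{item loc finite} the paper argues geometrically via cubic nets (eventually every boundary cube of a large enough net around $v$ contains an earlier vertex, capping the in-degree), whereas your Mecke computation is shorter and yields the exact value $\mathbb E\deg_{G_\infty}(o)=2$. For \ref{item recurrence} the paper proves finiteness of descendant trees by a percolation argument: if an infinite descending path had arrival times accumulating at some $r\in(0,1]$, then a subcritical site percolation on a coarse grid (marking cubes that contain a vertex with arrival time near $r$ or lack any vertex with arrival time $<r$) would have an infinite cluster. You instead exploit the key combinatorial fact (used elsewhere in the paper) that the parent of the $j$-th vertex in $G_n$ is uniform on $[j-1]_0$, so the labelled tree is a random recursive tree; then $\mathbb E|T_i|=n/(i+1)$ and Markov give $\mathbb P[|T_V|\ge K]\le C(1+\log K)/K$ uniformly in $n$, which passes to the limit. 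One remark: connectedness is not actually needed for \ref{item recurrence}, since each forest component with all descendant subtrees finite is one-ended or finite, hence recurrent.

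Your treatment of \ref{item local limit} is in the right direction but leaves the key locality step --- that the $r$-ball of $o$ is determined by the configuration in a fixed Euclidean window with probability $\ge 1-\varepsilon$, uniformly in $n$ --- unjustified. The danger is that some vertex of the $r$-ball (for instance the $r$-th ancestor of $o$) has very small arrival time and hence a long out-edge, so ``all moments of $\deg_{G_\infty}(o)$ finite plus forest'' does not by itself control the Euclidean span. The paper handles this concretely: conditioning on the a.a.s.\ event $X_{a^M(o)}\ge(\log n)^{-M-1}$, it invokes Theorem~\ref{thm 4.1} and Corollary~\ref{cor PoNN} to confine the entire descending tree of $a^M(o)$, and hence the graph $M$-ball of $o$, inside a Euclidean ball of polylogarithmic radius, well within a coupling window of side $(2d+1)n^{1/2d}$.

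The real gap is \ref{item connected}: your leader scheme does not explain how to force the ancestral ray of every vertex in a box through a single designated vertex before exit, and this is exactly the hard part. The paper's argument is quite different and avoids any global hierarchy. For $u_0,v_0$ at Euclidean distance $\kappa_0$, Lemma~\ref{infinitestretch} bounds the expected Euclidean length of the path from any vertex to its nearest ancestor with arrival time $\le t^{-d}$ by $C_dt/16$; taking $t=\kappa_0/3C_d$ and intersecting with the independent event that a surrounding annulus contains no vertex of arrival time $\le t^{-d}$ forces a common ancestor inside a cube $H_0$ of side $2\kappa_0$, with probability at least some fixed $c_d>0$ (Lemma~\ref{lem iterate}). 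Upon failure, one passes to the first ancestors of $u_0,v_0$ lying outside $H_0$ and having no further ancestors in $H_0$, and repeats in a larger cube; Corollary~\ref{cor infinitestretch} (the same length bound conditioned on avoiding $H_0$) shows each trial succeeds with probability $\ge c_d$ independently of the previous ones, so coalescence occurs a.s.
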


We note that Theorem~\ref{thm G infinity}\eqref{item local limit} has some interesting consequences. For example, it allows us to explicitly compute the number of copies of any fixed tree within $G_n$. 
While this computation may be cumbersome in general, we do it explicitly to count precisely (up to smaller order terms) the number of cherries (that is, copies of a path on 3 vertices rooted at its central vertex) in dimension 1.

\begin{corollary}\label{cor:cherries}
Suppose $d=1$. Then, a.a.s.\ $G_n$ contains $n+\tfrac{(3-2\log 2 + o(1))n}{12} \approx 1.134 n$ cherries.
\end{corollary}
In addition, the following theorem provides lower and upper bounds for the number of leaves that are independent of the dimension.
\begin{theorem}\label{thm leaves}
A.a.s.\ $((4e^4)^{-1}+o(1))n \le L(G_n) \le (1/2+o(1))n$.
\end{theorem}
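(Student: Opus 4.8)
The plan is to compute bounds on $\mathbb{E} L(G_n)$ and then invoke Theorem~\ref{thm concentration} to transfer them to the random variable $L(G_n)$ itself. Note that the leaf-count function $g(T) = L(T)$ is $2$-Lipschitz with respect to the edit metric on $\mathcal{LT}^0_n$ (changing one edge affects the degrees of at most two vertices, hence the leaf count by at most $2$) and takes values in $[0,n]$, so Theorem~\ref{thm concentration} applies with $L = 2$, $\alpha = 1$: taking $\phi(n) = C_d \log n$ and $t = n^{2/3}$, say, gives $L(G_n) = \mathbb{E} L(G_n) + o(n)$ a.a.s. Thus it suffices to prove $(4e^4)^{-1} n (1+o(1)) \le \mathbb{E} L(G_n) \le (1/2 + o(1)) n$.

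For this I would work in the Poisson model with the arrival-time formulation of Definition~\ref{eq def 2}, and write $\mathbb{E} L(G_n) = \sum_v \mathbb{P}[v \text{ is a leaf}]$. A vertex $v$ with arrival time $x \in [0,1]$ is a leaf of $G_n$ precisely when no later-arriving vertex chooses $v$ as its nearest predecessor. The \textbf{upper bound} $(1/2 + o(1))n$ should follow from a simple global argument: in any tree on $N$ vertices the number of leaves is at most... no --- rather, I would use that the \emph{last} vertex to arrive is always a leaf while the expected fraction is controlled by noting that each non-root vertex has exactly one out-edge, so the number of internal (non-leaf) vertices equals the number of vertices that receive at least one in-edge; pairing up, $L(G_n) \le N - (\text{number of vertices with in-degree} \ge 1)$, and a short computation --- or simply the observation that the vertex of the second-smallest arrival time among any pair of ``consecutive'' points is non-isolated --- shows at least a $(1/2 - o(1))$-fraction are non-leaves in expectation. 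Concretely: condition on the point set; order points by arrival time as $u_1, u_2, \dots$; then $u_1$ is never a leaf (it is the target of $u_2$'s edge, or of the nearest point's edge), and more generally one shows $\mathbb{P}[u_i \text{ is internal}] \ge 1/2 - o(1)$ by a symmetry/coupling argument between $u_i$ and the point that arrives just before the first point to select $u_i$.

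For the \textbf{lower bound} $(4e^4)^{-1} n$, the cleanest route is a direct event: vertex $v$ with arrival time $x$ is certainly a leaf if, within its Voronoi-type region, \emph{every} later-arriving point is actually closer to some earlier point than to $v$. A sufficient (dimension-free!) sub-event: let $w$ be the nearest neighbor of $v$ among \emph{all} points and suppose $w$ arrives before $v$ (so $v$'s edge goes to $w$); then $v$ is a leaf provided no point in the half-space bisector region ``behind'' $v$ relative to $w$ arrives after $v$. To get a dimension-independent constant one restricts attention to the ball $B(v, \rho)$ where $\rho = |p_v - p_w|$: any point $u$ arriving after $v$ that is closer to $v$ than to all earlier points must lie in $B(v,\rho) \cap B(w, |p_u - p_w|)$... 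I would instead use the slicker fact that $v$ is a leaf whenever the point $w$ closest to $v$ arrives before $v$ \emph{and} among the (at most countably many) points in the lens $B(v,\rho)$, all that arrive after $v$ also have $w$ (or some earlier point) closer --- and bound the probability by $\int_0^1 \mathbb{P}[\text{no point of } B(v,\rho) \setminus\{w\} \text{ arrives in } (x,1)] \, dx$ together with the independence of arrival times, yielding the clean constant $e^{-4}/4$ after optimizing over the geometry (the factor $4$ accounting for $w$ arriving first and the worst-case expected number of points that could ``steal'' $v$).

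\textbf{The main obstacle} I anticipate is making the lower-bound event genuinely dimension-free with the sharp constant $(4e^4)^{-1}$: the naive geometric events one writes down (points landing in balls or half-spaces) have volumes that depend on $d$ through the ratio of ball volumes, and one must find the right monotone sub-event whose probability does not degrade with $d$. The trick is presumably to condition on the nearest neighbor $w$ of $v$ and its distance $\rho$, and then observe that the \emph{only} points that can attach to $v$ lie in $B(v,\rho)$, whose expected cardinality, \emph{conditioned on $w$ being the nearest point}, is a dimension-independent constant (one extra point in expectation, by a Poisson/exchangeability computation); then requiring $w$ to arrive before $v$ and all those interfering points to arrive before $v$ gives $\mathbb{E}\,e^{-(\text{something})}/(\text{something})$ which Jensen or a direct Poisson computation turns into $e^{-4}/4$. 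Verifying that conditional expected count is exactly the dimension-free constant claimed, and handling the boundary/torus effects and the transfer between the Poisson and fixed-$n$ models, are the routine-but-delicate parts.
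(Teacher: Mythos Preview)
Your concentration step is correct and matches the paper exactly: $L(\cdot)$ is $2$-Lipschitz, and Theorem~\ref{thm concentration} with $\phi(n)=C_d\log n$ and $t=n^{2/3}$ does the job.

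Both bounds on $\mathbb E L(G_n)$, however, have genuine gaps.

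\textbf{Upper bound.} Your sketch (``pairing up'', ``symmetry/coupling between $u_i$ and the point that arrives just before the first point to select $u_i$'') does not isolate a mechanism that forces $\mathbb P(i\text{ is a leaf})\le i/(n-1)$. The paper's argument is not a generic symmetry: it shows that the events $A_j=\{j\text{ does not attach to }i\}$ are \emph{negatively correlated} in a precise sense, namely $\mathbb P(A_{j+1}\mid A_{i+1}\cap\dots\cap A_j)\le (j-1)/j$. This is proved via a monotone coupling (Lemma~\ref{lem inclusion} and Lemma~\ref{lem conditioning}) showing that the Voronoi cell of $i$, conditioned on $i$ having received no child so far, stochastically dominates the unconditional cell. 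That coupling is the whole content of the upper bound; without it (or a replacement) there is no reason the product $\prod_j \mathbb P(A_j\mid\cdots)$ should telescope to $i/(n-1)$.

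\textbf{Lower bound.} Your key geometric claim---``the only points that can attach to $v$ lie in $B(v,\rho)$'' where $\rho$ is the distance from $v$ to its nearest earlier neighbour $w$---is false. In dimension~$1$ take $w=0$, $v=1$ (so $\rho=1$) and $u=3$: then $u\notin B(v,\rho)$, yet if $w$ and $v$ are the only earlier points one has $|u-v|=2<3=|u-w|$, so $u$ attaches to $v$. The set of possible children of $v$ is its Voronoi cell, which is not contained in any ball of radius $\rho$ around $v$. Consequently the ``dimension-free constant'' you hope to extract from the cardinality of $B(v,\rho)$ does not control the leaf probability. The paper obtains $(4e^4)^{-1}$ by a completely different route (Lemma~\ref{lemma LB leaves}): restrict to vertices $i\ge k:=\lfloor n/2\rfloor$; by two applications of Markov's inequality, with probability at least $1/4$ the Voronoi cell of $i$ among $[k-1]_0\cup\{i\}$ has volume at most $8$; on that event the probability that none of the remaining $\le n/2$ vertices lands in it is at least $(1-8/n)^{n/2}\to e^{-4}$. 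The product $\tfrac14\cdot e^{-4}$ is where the constant comes from.
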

 
\noindent
Note that, while the bounds in Theorem~\ref{thm leaves} do not depend on $d$, the number of leaves divided by $n$ presumably converges to a constant depending on $d$ (though we are unable to prove it). 

\paragraph{Overview of proofs.}
The ingredients of the proofs are different and the difficulty varies, having the more difficult proofs concerning $G_n$ towards the end. 
Whereas Theorem~\ref{thm concentration} follows by a nice though relatively simple application of the typical bounded differences inequality from~\cite{War}, 
the proof of Theorem~\ref{thm main 1} concerning the degree sequence is far more involved. 
For the upper bound, we introduce the geometric notion of a \emph{cubic net} which roughly represents a family of disjoint cubes associated to a vertex $v$, each containing at most one child of $v$. 
The difficulty in our proof hides in choosing the size of the cubic net properly in terms of $v$ (note that, in contrast to random geometric graphs, no absolute bound on the Euclidean length of an edge can be imposed). 
To do this, we divide the vertices in $[n-1]_0$ into groups forming intervals $[a,b)$ with $a < b$ satisfying $b/a = O(1)$, and estimate the correct size of the cubic net of $v\in [a,b)$ conditionally on the positions of the already embedded vertices $[a-1]_0$. 
This is done independently for all vertices in $[a,b)$. The lower bound uses the same partition of $V(G_n)$: the main point is that a.a.s., for any two groups of vertices $[a_1,b_1)$ and $[a_2,b_2)$ satisfying $b_1 \le a_2$, a constant proportion of vertices in $[a_1,b_1)$ receive a child from $[a_2,b_2)$. Based on the proof of Theorem~\ref{thm main 1}, Theorem~\ref{cor:maxdegree} follows for free. 
 
The results provided by Theorem~\ref{thm diam} are proved using different techniques. 
The proof of Part\eqref{line a} is simpler and uses the fact that the parent of vertex $i$ is uniformly distributed among $[i-1]_0$. 
Part\eqref{line b} requires a more careful treatment. 
The upper bound on the height follows from a Chernoff-type inequality already used in Part\eqref{line a}. 
The proof of the lower bound is divided in two stages. 
We first show that, for some well-chosen $t$, the Euclidean distance between any vertex and its parent in $G_n$ is ``not too large'' a.a.s.\ (thus showing a kind of localization property for the paths towards the root). 
The second stage relies on an involved argument based on renormalization. 
More precisely, we group the vertices into a large (though constant) number of groups according to their arrival times. 
Starting with a relatively fine tessellation into cubes, in every cube we look for a long path constructed as follows: it starts with an arbitrary vertex $v$ belonging to the last group of vertices, and ends in the nearest ancestor of $v$ from a previous group.
Then, repeat this argument with a coarser tessellation and the next group of vertices, etc., until reaching the group of earliest arrived vertices. 
By suitably gluing some of the paths together, we get the desired lower bound on the height, and showing that there are two disjoint long paths, we get the desired lower bound on the diameter.

The results on the infinite graph make use of previous ideas together with some new ingredients. Proving the local convergence in Theorem~\ref{thm G infinity}\eqref{item local limit} uses a coupling and the fact that the descending tree of a typical vertex in both the finite and the infinite graph is contained a.a.s.\ in a sufficiently large ball.
The proof of the local finitude is simple and relies on cubic nets while the proof of the recurrence is a bit more involved: 
more precisely, by comparison with an auxiliary branching process, we show that there is no infinite path on which the arrival times of the vertices grow. 
This proves that (every connected component of) the graph $G_{\infty}$ can be constructed from the graph $(\mathbb N, \{i, i+1\}_{i\ge 1})$ via attaching finite trees to every integer, and the simple random walk on the latter graph is recurrent. 
Regarding connectivity, we rely on an infinite-volume version of the main result from~\cite{BBCS23}. 
We show that, for every $M > 0$ and $\varepsilon > 0$, all vertices in the ball $B(0,M)$ share a common ancestor with sufficiently small arrival time (depending on $M$ and $\varepsilon$) with probability at least $1-\varepsilon$.

Finally, in the proof of Theorem~\ref{thm leaves},  
the upper bound is the more difficult part. 
It is done by an intricate stochastic comparison between $G_n$ and the uniform attachment tree. 
This comparison is based on the intuitive fact that a vertex of high degree in $G_n$ shall have many vertices close to it, 
which should reduce the probability of the following vertices to attach to it. 
The lower bound is more direct. Roughly speaking, it comes from bounding from below the number of vertices in the second half of $[n-1]_0$ with parents in the first half of the set.

\paragraph{Discussion of the results.} Our results provide interesting insights for the structure of the online nearest neighbor tree: for example, local graph parameters (in the sense of Benjamini-Schramm) such as the number of vertices with given degree apparently depend on the dimension.
Our partial results on cherries corroborate the intuition that the constant in front of the leading term of such local counts should depend on $d$
and Monte-Carlo simulations for checking the number of leaves give evidence to support this conjecture as well. In fact, after the submission of our paper, this was confirmed by Casse~\cite{Cas23} precisely for the number of cherries.
In fact, based on our knowledge on the local limit, explicit integral expressions for counting subgraphs of constant size can be derived. In particular, being equipped with the suitable numerical integration tools, one might be able to analyze in more depth the question of determining the dimension $d$ of the $d$-NN tree asymptotically almost surely given only the combinatorial structure of $G_n$. 
In particular, we conjecture that the number of leaves, as well as the number of cherries, is strictly increasing in the dimension, and therefore both parameters may serve as consistent estimators of $d$. 
(Note that while~\cite{Cas23} provides an integral expression for the cherries, its monotonicity remains an open question.)
On the other hand, results on non-local parameters, such as the typical distance of a random vertex to the root or the diameter of a graph, do not show a first order dependency in terms of the dimension. This makes us wonder if there are non-local parameters that distinguish the dimension.

\begin{question}\label{conj:local}
Is there a graph property that cannot be derived from the local limit of $G_{\infty}$ of $G_n$ but depends in a non-trivial way from the dimension of the ambient space?
\end{question}

\subsection{Related work}\label{relwork}
The current subsection makes a fairly detailed account of related models and problems, and therefore may be consulted in part or skipped on a first read.

The $d$-dimensional $k$-nearest neighbor model belongs to two groups of models that have both received considerable attention: geometric models on the one hand, and models of randomly growing networks on the other hand. For $k = 1$, several authors were interested in the total length of the edges of the resulting random tree.
Its analysis dates back to Steele~\cite{Ste89} who gave a deterministic upper bound on the total length of the edges of the tree when the ambient space is $[0,1]^d$. 
Later, Penrose and Wade~\cite{PW08} showed concentration of the total length of the edges of the nearest neighbor tree,
and Wade~\cite{Wad09} provided a finer upper bound on the variance of this functional when the dimension of the ambient space is sufficiently high. 
Very recently, Trauthwein~\cite{Tra22} went even further by proving a central limit theorem.

Aldous~\cite{Aldous} analyzed the nearest neighbor tree from a different perspective. 
He considered a process which starts with $r\ge 2$ uniformly random points in different colors, and then connects each of the points arriving later (also embedded uniformly at random and independently) to the closest already embedded point, thereby inheriting its color. 
In fact, by giving every point in the square the color of its closest point in the process, this defines a sequence of random colorings of the square.
Aldous showed that the colored regions converge in some sense to a random partition of the plane, leaving topological properties of the regions (such as the question if these are connected) open. 
In the case of two colors, a recent work of Basdevant, Blanc, Curien and Singh~\cite{BBCS23} answered one of Aldous' questions by showing that the colored regions converge in the Hausdorff sense to closed sets with boundary of Hausdorff dimension between $d-1$ and $d$.

Geometric graphs on a vertex set chosen according to some finite density in a bounded subset of $\mathbb{R}^d$ have a long history. 
In the classical model of random geometric graph, the positions of $n$ points, identified with the vertex set, are uniformly chosen in $[0,1]^d$ (or a PPP($n$) is performed), 
and two vertices are connected by an edge if the Euclidean distance between them is at most a certain threshold radius $r=r(n)$. 
This model was introduced in the seminal paper of Gilbert in the early 60's~\cite{Gil61} as a model for telecommunication systems 
(the threshold radius $r$ is interpreted as the maximal distance for the interference of frequencies of antennae). 
By now, this model is very well understood from a mathematical point of view, and we refer the reader to the monograph of Penrose~\cite{Pen} for a detailed account. 
In order to make the model more flexible and to account for devices / antennae that are more powerful than others, the restriction of having a fixed connectivity threshold was then replaced by random radii, having an edge between two vertices if the distance is less than the sum of the associated radii (see e.g.\ the work of Meester and Roy~\cite{MR96}; for recent results on the infinite version of such a model, known as continuum percolation, also see the paper of Ahlberg, Tassion and Teixeira~\cite{ATT16}). 
For further generalizations to Poisson Point Processes distributed according to a random intensity measure, called Cox point processes, 
we refer to the recent paper of Jahnel and T\'{o}bi\'{a}s~\cite{Coxprocesses} and the references therein.

Now, we present the model of the $d$-dimensional $k$-nearest neighbor graph. 
The vertex set of the graph is given by $n$ uniformly distributed points in $[0,1]^d$ or a PPP(1) in $[0,n^{1/d}]^d$ (again, the second definition allows extension to $\mathbb R^d$) and each vertex has $k$ outgoing edges to its $k$ nearest neighbors. 
After ignoring orientations and identifying parallel edges, we obtain a graph with minimum degree $k$ but unbounded maximum degree (in terms of $k$). 
On the one hand, Xue und Kumar in~\cite{Kumar}, and also Balister, Bollob\'{a}s, Sarkar and Walters showed that the critical value of $k$ for dimension $d=2$ to obtain a connected graph is $k=\Theta(\log n)$ (see~\cite{BBSW1,  BBSW2}). 
By a result of H\"{a}ggstr\"{o}m and Meester~\cite{HM} it is known that, for any integer $d$, the 1-nearest neighbor graph has no giant component (that is, no component of linear size), whereas for $d$ sufficiently large, the 2-nearest neighbor graph does have a giant component. 
Moreover, for all $d$, there exists a giant component if $k$ is chosen sufficiently large (see the paper of Teng and Yao~\cite{TY07}). 
The exact threshold value in terms of $k$ for the existence of a giant component in dimension $d=2$ is perhaps the most studied problem due to its connection to the percolation threshold in the infinite setup. 
In the aforementioned paper, Teng and Yao~\cite{TY07} showed that the 213-nearest neighbor graph contains a giant component, 
and this was later improved by Bagchi and Bansal~\cite{Bagchi} to show the same result for the 183-nearest neighbor graph. More than ten years ago, Balister and Bollob\'{a}s~\cite{BB11} pointed out that the 11-nearest neighbor graph already contains a giant component.
They also made Monte-Carlo simulations indicating that the smallest value of $k$ for which there is a giant component in two dimensions is $k=3$. 
At the same time, the 1-nearest neighbor graph in fact has a linear number of components for all $d$ (see Eppstein, Paterson and Tao~\cite{Eppstein}). 
In the version where an edge between $u$ and $v$ is present if simultaneously $u$ is among the $k$ nearest neighbors of $v$ and $v$ is among the $k$ nearest neighbors of $u$, it is known that for $k=2$ and $d=2$, the resulting bidirectional 2-nearest neighbor graph does not have a component of linear size, see a paper of Jahnel and T\'{o}bi\'{a}s~\cite{JahnelTobias}.
 
The motivation for studying randomly growing networks is twofold. 
On the one hand, in numerous applications networks grow over time and one wants to infer information about the entities of the network in general, and about the source of the network in particular. 
Examples include rumor spreading in social networks (see for example~\cite{Fanti1, Fanti2, Fanti3, Shah2}), finding the source of a computer virus in a telecommunication network (see~\cite{Shah}) and the evolution of biological networks~\cite{Navlakha}, to name a few. 
On the other hand, the rigorous study of randomly growing networks has also its own mathematical interest. 
The arguably most famous growing model for networks is the preferential attachment model (the PA model for short) formalized by Barab\'{a}si and Albert~\cite{BarAlb} (introduced in fact already much earlier by Yule to explain the power-law distribution of certain plants~\cite{Yule}): 
starting with a small seed graph (usually consisting of a single vertex), new vertices are added to the graph one after the other, 
and the probability that the newly arrived vertex $v$ connects to a particular vertex $u$ is proportional to some function of the degree of $u$ at the moment of arrival of $v$. 
The definition may or may not allow loops and multiple edges; irrespectively, the principle is that vertices already having high degree are more likely to attach by edges to newly arrived ones, which illustrates the well-known \emph{rich-get-richer} principle. 
This results in a degree sequence that follows a power law whose exponent depends on the precise connection probability function (see e.g.~\cite{BRST, BBBCR03} and also Chapter 8 of Volume 1 of Van der Hofstad's book~\cite{Remco1} for more general results). 
The maximum degree of the PA graph with linear attachment rule is roughly $\Theta(\sqrt{n})$ (see the paper~\cite{Flax} by Flaxman, Frieze and Fenner); 
for results on the maximum degree in a more general context, see also the paper of M\'{o}ri~\cite{Mori}. 
If one allows loops and each new vertex connects to $m$ vertices at its arrival, it is known that the PA graph is not connected for $m=1$ (in fact there are $\Theta(\log n)$ connected components a.a.s.), whereas for $m=2$, the PA graph is connected a.a.s.\ (see the paper of Bollob\'{a}s and Riordan~\cite{BollobasRiordan}). 
The last paper also shows that for $m \ge 2$, the diameter of the original model is a.a.s.\ $\Theta(\log n/\log \log n)$. 
Let us remark that Frieze, Pra\l{}at, P\'{e}rez-Gim\'{e}nez and Reiniger~\cite{PawelHC} and Acan~\cite{Acan} also show that, for all sufficiently large $m$, the PA graph (on an even number of vertices) a.a.s.\ has a perfect matching, and for even larger $m$, it also contains a Hamilton cycle. 
Once again, in a more general setup, depending on the precise connection probability, diameters and typical distances in the PA model vary between $\Theta(\log n)$, $\Theta(\log n/\log \log n)$ and $\Theta(\log \log n)$, see Volume 2 of the monograph of Van der Hofstad~\cite{Remco2}. 
The case of the PA model with $m=1$ without loops and parallel edges yields a tree and was studied in even more detail.
For example, the depth of node $n$ and the height of such a tree are again a.a.s.\ of logarithmic order (see the paper of Mahmoud~\cite{Mahmoud1992} for the leading constant in front of the depth of node $n$ of such a tree, and the paper of Pittel for the leading constant of the height and the diameter of this tree~\cite{Pittel1994}). 
For further results on this model, we refer the reader to Chapters~15.3 and~15.4 of the book of Frieze and Karo\'{n}ski~\cite{FKRandom} and the references therein, and for the height of more general weighted tree models, see also the papers of S\'{e}nizergues, and of S\'{e}nizergues and Pain~\cite{Seni, SeniPain}.

Another similar model in this framework is the uniform attachment model (or the UA model for short): starting again with a small seed graph (usually consisting of a single vertex), 
vertices arrive one after the other, and each newly arrived vertex connects by an edge to a uniformly chosen set of $m$ of the previous vertices (again, loops and multiple edges might or might not be are allowed). 
Once again, for a large constant $m$, the UA graph on an even number of vertices a.a.s.\ has a perfect matching, and for even larger $m$, a.a.s.\ it has a Hamilton cycle~\cite{PawelHC, Acan}. 
The particular case of $m=1$ without allowing loops, known as the uniform attachment tree or also the random recursive tree (the second term assumes that one starts with exactly one vertex), received particular attention due to its connection to sorting algorithms in computer science. 
Devroye~\cite{Dev1988} and Mahmoud~\cite{Mah1991} showed that the depth of the last node is a.a.s.\ $(1+o(1))\log n$, whereas Devroye~\cite{Dev1987} and Pittel~\cite{Pittel1994} showed that the height of the tree is a.a.s.\ $(e+o(1))\log n$. 
Devroye, Fawzi and Fraiman~\cite{DFF} later generalized these results and showed that the height remains logarithmic for more general attachment rules, with the leading constant depending on the particular rule, 
and in the already mentioned paper~\cite{SeniPain} S\'{e}nizergues and Pain generalized this to weighted recursive trees. 
For various statistics of UA trees, we refer the reader to the book of Drmota~\cite{Drmota2009} and Chapter 15.2 in the book of Frieze and Karo\'{n}ski~\cite{FKRandom}. 
Let us point out that the UA tree can be intuitively seen as the online $d$-NN tree that we study here when $d=\infty$: 
roughly speaking, when $d=\infty$, the two models coincide since every newly arrived vertex has infinitely many new degrees of freedom, 
and since there are only finitely many pairs of vertices, they impose constraints that are described by finitely many dimensions at any moment. 
In particular, every new vertex has each of the previously arrived ones as its nearest neighbor with the same probability.

The infinite setup considered here has similarities with the radial spanning tree (RST) on a Poisson Point Process considered by Baccelli and Bordenave~\cite{Baccelli} defined as follows: 
Sample a PPP(1) $\mathcal N$ in $\mathbb{R}^2$. The RST has vertex set $\mathcal N\cup (0,0)$, and every vertex $v\in \mathcal N$ is connected by an edge to the nearest vertex $u\in (\mathcal N\cup (0,0))\setminus v$ satisfying $|u| < |v|$. 
It is clear that this construction yields a tree a.s.\ (every vertex different from the origin connects to a vertex closer to the origin), and Baccelli and Bordenave showed that this tree is a.s.\ locally finite (i.e.\ no vertex has infinite degree), 
that a.s.\ every semi-infinite path starting from the origin has an asymptotic direction (that is, all paths of vertices $(0,0), Y_1, Y_2, \dots$ with $Y_i \in \mathcal N$ are such that $(Y_n / |Y_n|)_{n\ge 1}$ a.s.\ has a limit in the unit sphere), 
that a.s.\ for every point $u$ on the unit sphere there exists at least one semi-infinite path with asymptotic direction $u$, 
that a.s.\ the set of points $u$ on the unit sphere having more than one semi-infinite path with asymptotic direction $u$ is dense on the sphere, 
that a.s.\ for all $k$ large enough, the set of points at graph distance at most $k$ from the root is contained in a ball of radius $\Theta(k)$ around the origin, 
and they also described the distribution of the length of an edge between a vertex $X \in \mathcal N$ and its parent and other functionals. 
For further results on the length of a path to the origin, see also the paper of Bordenave~\cite{Charles2}. 
Also, Baccelli, Coupier and Tran showed in~\cite{Coupier} that the expected number of edges intersecting the sphere of radius $r$ around the origin is $o(r)$ as $r\to \infty$ (see also the later paper of Coupier~\cite{Coupier2} for further results). 
There are a few major differences with our model: first, in our case, vertices arrive online, second, it is not obvious that our model yields a tree (we show this later on), and moreover, there is no distinguished direction towards the origin.

Although our paper is not directly related to reconstructing the root of stochastically growing networks, this question has been studied a lot in machine learning. 
Recently, R\'{a}cz and Sridhar~\cite{Racz} looked at the following general question: consider any model of randomly growing graphs, such as uniform attachment, preferential attachment or the model considered here. 
Given such a model, a pair of graphs $(G_1, G_2)$ is grown in two stages: until time $t_*$ they are grown together (i.e.\ $G_1 = G_2$), after which they grow independently according to the underlying growth model. 
They showed that, whenever the seed graph has an influence on the final graph, correlation can be detected even if $G_1$ and $G_2$ are coupled for only one single time step after the seed. In fact,~\cite{Racz} generalized some previous papers: Bubeck, Mossel and R\'{a}cz~\cite{Bubeck} studied the influence of a seed in PA models. 
They used statistics based on the maximum degree to show that, for any two seed trees $S$ and $T$ on at least three vertices with different degree profiles, the total variation distance between the random graphs, built from $S$ and from $T$ respectively, is positive. 
Moreover, Curien, Duquesne, Kortchemski and Manolescu~\cite{Nicolas} showed that the same holds for any two non-isomorphic trees $S$ and $T$ on at least three vertices, and an analog of the same result was shown by Bubeck, Eldan, Mossel and R\'{a}cz in~\cite{Bubeck2} for the model of UA trees using a certain centrality statistic.

A similar question was studied by Bubeck, Devroye and Lugosi~\cite{Adam} for both UA and PA models: using a centrality measure based on subtree sizes, for every $\varepsilon > 0$, the authors found a set of $K = K(\varepsilon)$ vertices containing the root with probability at least $1-\varepsilon$ (note that $K$ is independent of the size of the final tree). 
More precisely, they prove that, in the UA model the optimal $K$ is subpolynomial in $1/\varepsilon$, whereas in the PA model, it is polynomial in $1/\varepsilon$ (in fact, they give almost tight bounds on $K$ in both models). 
This centrality measure was then also used by Lugosi and
Pereira~\cite{Gabor2} and by Devroye and Reddad~\cite{Luc2} for the more general problem of obtaining confidence intervals for the seed graph of a UA tree; see also~\cite{Khim} for related results. 
Moreover, Jog and Loh~\cite{Jog1, Jog2} show that the centroid, as defined by~\cite{Adam}, changes only finitely many times in the process of constructing the tree. 
For root-finding algorithms in a more general class of random growing trees, see also the paper of Banerjee and Bhamidi~\cite{BanBhamidi}.

Related questions on the following broadcasting model were studied by Addario-Berry, Devroye, Lugosi and Velona~\cite{broadcasting}: 
Assign a random bit (in $\{0,1\}$) to the root and start growing either the UA tree or a linear PA tree. 
At each step, when attaching a vertex, retain the bit of its parent with probability $q$ and flip it with probability $1-q$ for some $q \in [0,1]$. 
The goal is to estimate the bit of the root by either observing the bits of all vertices (since the root's identity is not known, also the root bit is revealed) or, in a harder version, only by observing the bits of all leaves. In~\cite{broadcasting}, the authors bound the probability of error for the root bit in both versions by using majority votes as well as the idea of centrality as above.

\paragraph{Organization of the paper.}
In Section~\ref{section:prelim} we introduce tools and concepts that are used in later sections, 
and in Section~\ref{sec:concentration} we prove the general concentration result (Theorem~\ref{thm concentration}). 
Section~\ref{sec degrees} deals with the results on the degree sequence and the maximum degree (Theorems~\ref{thm main 1}~and~\ref{cor:maxdegree}), 
and in Section~\ref{sec trees}, we show the results on the number of leaves and simulations on them (Theorem~\ref{thm leaves}). 
In Section~\ref{sec distance} we then prove the results on the typical distance to the root, the height and the diameter of $G_n$ (Theorem~\ref{thm diam}). 
Next, in Section~\ref{sec infty}, we provide the proofs of the results concerning the infinite graph (Theorem~\ref{thm G infinity}) and derive Corollary~\ref{cor:cherries}. Finally, in Section~\ref{sec conclusion}, we conclude with several open questions.

\section{Preliminaries and key concepts}\label{section:prelim}
\subsection{Basic tools}
We begin this section with a variant of \emph{Chernoff's bound}, see e.g.\ Corollary 2.3 in~\cite{JLR} in the case of the Binomial distribution and~\cite{Can} in the case of the Poisson distribution. 
Let $X \sim \textrm{Bin}(n,p)$ (resp.\ $X \sim \textrm{Po}(\lambda)$) be a random variable distributed according to a Binomial distribution with parameters $n$ and $p$ (resp.\ Poisson distribution with parameter $\lambda$). 
Then, for every $t \ge 0$, we have 
\begin{eqnarray}
\max\{\mathbb P(X \ge \mathbb E[X] + t ), \mathbb P(X \le \mathbb E[X] - t )\} &\le& \exp \left( - \frac{t^2}{2 (\mathbb E[X] + t)} \right). \label{chern}
\end{eqnarray}

The next lemma is famous under the name \emph{bounded difference inequality} or also \emph{McDiarmid's inequality}.

\begin{lemma}[\cite{JLR}, Corollary 2.27]\label{bentkus}
Let $X_1,X_2,\dots,X_n$ be independent random variables with $X_k$ taking values in $\Lambda_k\subseteq \mathbb R$. Assume that a function $f:\Lambda_1\times \dots \times \Lambda_n \to \mathbb R$ satisfies the following Lipschitz condition for some numbers $(c_k)_{k=1}^n$:

\begin{align}\label{Lip cond}
&\text{For any } k\in [n] \text{ and any two vectors } \mathbf{x_1, x_2}\in \Lambda_1\times \dots \times \Lambda_n \nonumber\\ 
&\text{that differ only in the } k \text{-th coordinate, }|f(\mathbf{x_1})-f(\mathbf{x_2})|\le c_k.
\end{align}

\noindent
Then, for every $t \ge 0$, the random variable $Z = f(X_1, \dots, X_n)$ satisfies
\begin{align}
    & \mathbb P(Z\ge \mathbb E Z +t)\le \exp\left(-\dfrac{t^2}{2\sum_{k=1}^n c_k^2}\right),\label{eq:bdd1}\\
    & \mathbb P(Z\le \mathbb E Z -t)\le \exp\left(-\dfrac{t^2}{2\sum_{k=1}^n c_k^2}\right).\label{eq:bdd2}
\end{align}
\end{lemma}
In fact, we note that~\eqref{eq:bdd1} and~\eqref{eq:bdd2} can be strengthened (the factor 2 can be put in the numerator) but the weaker version we present is sufficient for our purposes.

The following generalization of the bounded difference inequality (Lemma~\ref{bentkus}) from~\cite{War} will be key in the proof of Theorem~\ref{thm concentration}. 
The difference with Lemma~\ref{bentkus} is that, with small probability, exceptionally large differences are allowed.

\begin{theorem}[\cite{War}, Theorem 2 (two-sided version)]\label{typical BDI}
Let $X_1, X_2, \dots, X_n$ be independent random variables with $X_k$ taking values in $\Lambda_k$ and $X = (X_1, \ldots, X_n)$. Let $\Gamma\subseteq \prod_{i\in [n]} \Lambda_i$ be an event and assume that the function $f:\prod_{i\in [n]} \Lambda_i \to \mathbb R$ satisfies the following typical Lipschitz condition:\\

$\mathrm{(TL):}$ There are numbers $(c_k)_{k\in [n]}$ and $(d_k)_{k\in [n]}$ with $c_k\le d_k$ such that, whenever $x, \tilde{x}\in \prod_{i\in [n]} \Lambda_i$ differ only in the $k$-th coordinate, we have
\[|f(x) - f(\tilde{x})|\le \left\{
\begin{array}{ll}
c_k, \text{ if } x\in \Gamma,\\
d_k, \text{ otherwise.}
\end{array}
\right.\]

Then, for any numbers $(\gamma_k)_{k\in [n]}$ with $\gamma_k\in (0,1]$, there is an event $\mathcal B = \mathcal B(\Gamma, (\gamma_k)_{k\in [n]})$ satisfying
\begin{equation*}
    \mathbb P(\mathcal B)\le \mathbb P(X\not\in \Gamma) \sum_{k\in [n]} \gamma^{-1}_k \text{ and } \overline{\mathcal B}\subseteq \Gamma,
\end{equation*}
and such that for $\mu = \mathbb E[f(X)], e_k = \gamma_k(d_k - c_k)$ and every $t\ge 0$, we have
\begin{equation*}
    \mathbb P(|f(X) - \mu|\ge t \text{ and } \overline{\mathcal B})\le 2\exp\left(-\dfrac{t^2}{2\sum_{k\in [n]} (c_k + e_k)^2}\right).
\end{equation*}
\end{theorem}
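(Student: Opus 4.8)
The plan is to run the Doob--martingale computation that underlies McDiarmid's inequality (Lemma~\ref{bentkus}), but to \emph{stop the martingale just before the first atypical coordinate is revealed}. Write $\mathcal F_k=\sigma(X_1,\dots,X_k)$, let $Z_k=\mathbb E[f(X)\mid\mathcal F_k]$ be the exposure martingale (so $Z_0=\mu$ and $Z_N=f(X)$), and set $D_k=Z_k-Z_{k-1}$. By independence $Z_k=h_k(X_1,\dots,X_k)$ with $h_k(x_{\le k})=\mathbb E\bigl[f(x_{\le k},X_{k+1},\dots,X_N)\bigr]$, and hence, writing $X_k'$ for an independent copy of $X_k$, $D_k=\mathbb E_{X_k'}\bigl[h_k(X_{<k},X_k)-h_k(X_{<k},X_k')\bigr]$. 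The first step I would establish is the conditional increment bound
\[
|D_k|\ \le\ c_k+(d_k-c_k)\,\mathbb P(X\notin\Gamma\mid\mathcal F_{k-1}).
\]
Indeed, for fixed $X_{<k}$ and two possible values $a,b$ of the $k$-th coordinate, averaging $(\mathrm{TL})$ over $X_{k+1},\dots,X_N$ — and using that the bound $c_k$ applies as soon as \emph{either} compared point lies in $\Gamma$ — gives $|h_k(X_{<k},a)-h_k(X_{<k},b)|\le c_k+(d_k-c_k)\,\mathbb P_{X_{>k}}\bigl((X_{<k},b,X_{>k})\notin\Gamma\bigr)$; specialising to $a=X_k$, $b=X_k'$ and taking the expectation over $X_k'$ yields the display, since $\mathbb E_{X_k'}\mathbb P_{X_{>k}}\bigl((X_{<k},X_k',X_{>k})\notin\Gamma\bigr)=\mathbb P(X\notin\Gamma\mid\mathcal F_{k-1})=:U_k$, which is $\mathcal F_{k-1}$-measurable.

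Next I would introduce the $\mathcal F_{k-1}$-measurable events $\mathcal B_k=\{U_k\ge\gamma_k\}$, the time $\tau=\min\{k\in[N]:X\in\mathcal B_k\}$ (set to $N+1$ if no such $k$ exists), and the stopping time $\sigma=\tau-1$ (indeed $\{\sigma\le k\}=\{\tau\le k+1\}\in\mathcal F_k$). Consider the stopped martingale $W_k=Z_{\sigma\wedge k}$: its increments are $W_k-W_{k-1}=D_k\,\mathds{1}[\sigma\ge k]=D_k\,\mathds{1}[\tau>k]$, and on $\{\tau>k\}$ we have $X\notin\mathcal B_k$, i.e.\ $U_k<\gamma_k$, so the previous display gives $|D_k|\le c_k+(d_k-c_k)\gamma_k=c_k+e_k$. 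Hence $W$ is a martingale with $W_0=\mu$ and a.s.\ bounded increments $|W_k-W_{k-1}|\le c_k+e_k$, and the Azuma--Hoeffding estimate (the martingale computation inside the proof of Lemma~\ref{bentkus}) yields
\[
\mathbb P\bigl(|W_N-\mu|\ge t\bigr)\ \le\ 2\exp\!\left(-\frac{t^2}{2\sum_{k\in[N]}(c_k+e_k)^2}\right).
\]

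Finally I would take $\mathcal B=\{X\notin\Gamma\}\cup\bigcup_{k\in[N]}\mathcal B_k$, which depends only on $\Gamma$ and $(\gamma_k)_{k\in[N]}$ and satisfies $\overline{\mathcal B}=\{X\in\Gamma\}\cap\{\tau=N+1\}\subseteq\Gamma$. On $\{\tau=N+1\}$ one has $\sigma=N$, so $W_N=Z_N=f(X)$; therefore $\{|f(X)-\mu|\ge t\}\cap\overline{\mathcal B}\subseteq\{|W_N-\mu|\ge t\}$, and the last display gives the required tail bound. For $\mathbb P(\mathcal B)$: if $\mathbb P(X\notin\Gamma)\ge\gamma_1$ then $\mathbb P(X\notin\Gamma)\sum_{k}\gamma_k^{-1}\ge\gamma_1^{-1}\mathbb P(X\notin\Gamma)\ge1$, so the claimed bound is trivial; otherwise $\mathcal B_1=\{\mathbb P(X\notin\Gamma)\ge\gamma_1\}=\emptyset$, and Markov's inequality gives $\mathbb P(\mathcal B_k)\le\gamma_k^{-1}\mathbb E[U_k]=\gamma_k^{-1}\mathbb P(X\notin\Gamma)$ for $k\ge2$, whence $\mathbb P(\mathcal B)\le\mathbb P(X\notin\Gamma)\bigl(1+\sum_{k\ge2}\gamma_k^{-1}\bigr)\le\mathbb P(X\notin\Gamma)\sum_{k\in[N]}\gamma_k^{-1}$ using $\gamma_1^{-1}\ge1$.

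\medskip\noindent\textbf{Main obstacle.} The delicate point is to make a single event $\mathcal B$ do three jobs simultaneously: keep the stopped-martingale increments $\le c_k+e_k$, guarantee $\overline{\mathcal B}\subseteq\Gamma$, and keep $\mathbb P(\mathcal B)\le\mathbb P(X\notin\Gamma)\sum_k\gamma_k^{-1}$ with no spurious additive loss. This is exactly what forces the choices above — conditioning on $\mathcal F_{k-1}$ rather than $\mathcal F_k$ when defining $\mathcal B_k$, so that ``step $k$ is atypical'' is decided before $X_k$ is revealed; stopping at $\sigma=\tau-1$, so the last contributing increment is still controlled; adjoining $\{X\notin\Gamma\}$ to obtain $\overline{\mathcal B}\subseteq\Gamma$; and absorbing the resulting extra $\mathbb P(X\notin\Gamma)$ through $\gamma_1^{-1}\ge1$. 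A secondary (harmless) issue is the measurability of the conditional probabilities $U_k$ and the integrability of $f(X)$ — both fine as soon as the $\Lambda_k$ are, say, Polish and $f(X)\in L^1$, which holds in all applications (here $f$ is bounded).
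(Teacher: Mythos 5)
Your proof is correct. The paper does not actually prove this statement — Theorem~\ref{typical BDI} is quoted verbatim from Warnke's paper~\cite{War} (Theorem 2 there, two-sided form) and used as a black box in the proof of Theorem~\ref{thm concentration}. Your argument is essentially Warnke's own: control the Doob-martingale increments via the $\mathcal F_{k-1}$-measurable conditional probability $U_k=\mathbb P(X\notin\Gamma\mid\mathcal F_{k-1})$, derive the increment estimate $|D_k|\le c_k+(d_k-c_k)U_k$ by applying (TL) with the second comparison point replaced by an independent copy of $X_k$ (so the resulting bound is $\mathcal F_{k-1}$-measurable), stop the martingale just before the first index $k$ with $U_k\ge\gamma_k$, apply Azuma--Hoeffding to the stopped process, and absorb the residual $\mathbb P(X\notin\Gamma)$ in the bound for $\mathbb P(\mathcal B)$ by splitting on whether $\mathcal B_1$ is empty. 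Two points you handled correctly that are easy to miss: choosing the independent copy $X_k'$ (rather than $X_k$) as the distinguished point in (TL), so that $U_k$ is adapted to $\mathcal F_{k-1}$ and the event $\mathcal B_k$ is decided \emph{before} $X_k$ is revealed; and exploiting $\gamma_1\le 1$ (hence $\gamma_1^{-1}\ge1$) together with the determinism of $U_1$ so that the extra $\{X\notin\Gamma\}$ piece of $\mathcal B$ does not cost an additional summand in the probability bound.
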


We also need the following bounds on the tails of sums of i.i.d.\ exponential random variables:
\begin{lemma}[\cite{levy2021density}, Section 5]\label{lem erlang}
Let $(X_i)_{i\ge 1}$ be a family of i.i.d.\ exponential random variables of mean $\lambda$. Then, for every $k\ge 1$ and $\xi > 0$, we have that
\begin{equation*}
\mathbb P(X_1+\dots+X_k\ge \xi) = \sum_{s=0}^{k-1}\dfrac{1}{s!} \exp\left(-\dfrac{\xi}{\lambda}\right) \left(\dfrac{\xi}{\lambda}\right)^s.
\end{equation*}

\noindent
In particular: 
\begin{enumerate}[(i)]
    \item if $k\ge 2\xi/\lambda$, then 
\begin{equation*}
\mathbb P(X_1+\dots+X_k\le \xi)\le \dfrac{2}{k!} \exp\left(-\dfrac{\xi}{\lambda}\right) \left(\dfrac{\xi}{\lambda}\right)^k.
\end{equation*}
    \item if $k\le \xi/2\lambda$, then
\begin{equation*}
\mathbb P(X_1+\dots+X_k\ge \xi) \le \dfrac{1}{k!} \exp\left(-\dfrac{\xi}{\lambda}\right) \left(\dfrac{\xi}{\lambda}\right)^k.
\end{equation*}
\end{enumerate}
\end{lemma}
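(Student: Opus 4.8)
The plan is to identify the partial sum $S_k := X_1+\dots+X_k$ with the $k$-th arrival time of a Poisson process of rate $1/\lambda$, which converts the exact identity into the standard Poisson--Erlang duality, and then to derive parts (i) and (ii) by elementary geometric-series comparisons of Poisson point masses. Throughout, write $m := \xi/\lambda$ and $a_s := m^s/s!$, so that the Poisson$(m)$ point mass at $s$ equals $e^{-m}a_s$ and the consecutive ratio is $a_s/a_{s-1} = m/s$.

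\textbf{The exact identity.} View $X_1, X_2, \dots$ as the consecutive inter-arrival times of a Poisson process $(N_t)_{t\ge 0}$ of rate $1/\lambda$ on $[0,\infty)$, so that $S_k$ is the time of the $k$-th arrival and $N_t = \#\{j : S_j\le t\}$. Up to the null event $\{S_k = \xi\}$, the events $\{S_k\ge \xi\}$ and $\{N_\xi\le k-1\}$ coincide (by time $\xi$ strictly fewer than $k$ arrivals have occurred). Since $N_\xi\sim\mathrm{Po}(m)$, this gives
\[
\mathbb P(S_k\ge \xi) = \mathbb P(N_\xi\le k-1) = \sum_{s=0}^{k-1}\frac{1}{s!}\,e^{-m}m^s,
\]
which is the claimed formula. (Alternatively, one integrates the Gamma$(k,1/\lambda)$ density by parts $k-1$ times.)

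\textbf{Parts (i) and (ii).} For (i), take complements: $\mathbb P(S_k\le \xi) = e^{-m}\sum_{s\ge k}a_s$. When $k\ge 2m$, every term satisfies $a_{s+1}/a_s = m/(s+1)\le m/(k+1)\le 1/2$ for $s\ge k$, so $\sum_{s\ge k}a_s\le a_k\sum_{j\ge 0}2^{-j} = 2a_k = 2m^k/k!$; multiplying by $e^{-m}$ yields the bound. For (ii), $\mathbb P(S_k\ge \xi) = e^{-m}\sum_{s=0}^{k-1}a_s$ with $k\le m/2$. Now for $1\le s\le k-1$ we have $a_{s-1}/a_s = s/m\le (k-1)/m < 1/2$, so summing the geometric-type series backward from $s=k-1$ gives $\sum_{s=0}^{k-1}a_s\le a_{k-1}\sum_{j\ge 0}2^{-j} = 2a_{k-1} = 2m^{k-1}/(k-1)!$. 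Finally $2m^{k-1}/(k-1)!\le m^k/k!$ exactly because $2k\le m$, and multiplying by $e^{-m}$ completes the proof.

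\textbf{Main obstacle.} There is essentially no serious obstacle here. The only point demanding attention is bookkeeping: one must apply the ratio bound $a_{s+1}/a_s\le 1/2$ on the correct index range for (i) and the reversed bound $a_{s-1}/a_s\le 1/2$ for (ii), and then check that the leftover factor from geometric summation matches the stated hypotheses $k\ge 2\xi/\lambda$ and $k\le \xi/(2\lambda)$ — this is precisely why the constant $2$ survives in (i) while no constant exceeding $1$ is needed in (ii).
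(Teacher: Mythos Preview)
Your proof is correct. Note, however, that the paper does not actually prove this lemma: it is stated with a citation to \cite{levy2021density}, Section~5, and no proof is given in the paper itself. So there is no ``paper's own proof'' to compare against here.

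That said, your argument is the standard one and is entirely sound: the Poisson--Erlang duality gives the exact identity, and the geometric comparison of consecutive Poisson point masses handles both tail bounds cleanly. The bookkeeping is right in both directions, including the final step in (ii) where the factor $2$ from the geometric sum is absorbed using $2k\le m$.
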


\subsection{Palm theory}\label{sec:Palm}
Palm theory is one of the main tools for studying Poisson Point Processes. 
The \emph{Palm distribution} of a stationary point process $\mathcal N$ can be interpreted as the conditional distribution given that $0\in \mathcal N$. 
It is well-known that a stationary point process is Poisson if and only if its Palm distribution is the distribution of the original process with an extra point added at the origin (see for example Chapter 9 of~\cite{Last}). 
While the above fact has some deep consequences, we only use one simple corollary:

\begin{theorem}[\cite{Pen}, Theorem 1.6]\label{Palm:Penrose}
Fix $\lambda > 0$ and a Poisson Point Process $P_{\lambda}$ on a finite measurable set $\Omega\subseteq \mathbb{R}^d$. 
Also, fix $j \in \mathbb{N}$ and a bounded measurable function $h(Y,X)$ defined on all pairs of the form $(Y,X)$ with $X$ a finite subset of $\Omega$ and $Y$ a subset of $X$. 
Suppose that $h(Y,X)=0$ except when $Y$ has $j$ elements. 
Then,
$$
\mathbb E \left[\sum_{Y \subseteq P_{\lambda}} h(Y, P_{\lambda})\right] = \frac{\lambda^j}{j!}\mathbb E\, h(\chi_j, \chi_j \cup P_{\lambda}),
$$
where the sum on the left-hand side is over all subsets $Y$ of the random point set $P_{\lambda}$ and, on the right-hand side, the set $\chi_j$ corresponds to a set of $j$ points distributed independently, uniformly in $\Omega$, and also independently of $P_{\lambda}$. (Note that the expectation on the left is with respect to the probability measure associated to $P_{\lambda}$, while the expectation on the right is with respect to the joint measure associated to $(\chi_j, P_{\lambda})$.)
\end{theorem}

\subsection{Cubic nets}\label{subsec cubic nets}
We now introduce the key concept of a \emph{cubic net}. Roughly speaking, it represents a tessellation of $\mathbb T^d_n$, centered at a given point, into axis-parallel cubes of varying sizes. 
We use this tool to bound from above the degree of a vertex embedded at the center of the cubic net.

More formally, for a point $x\in \mathbb T^d_n$ and a real number $\ell > 0$, $\ell\le \ell_{max} = n^{1/d}/2$, 
define $W(x, \ell) = \prod_{i\in [d]} [x_i-\ell, x_i+\ell]$, which we call \emph{the $\ell$-window around $x$}. In particular, the $\ell_{max}$-window around $x$ covers the entire torus $\mathbb T^d_n$ for every point $x\in \mathbb T^d_n$.
Fix $K = K(d) = 2\lceil \sqrt{d}\rceil + 3$. 
Now, tessellate $W(x, \ell)$ into subcubes of side length $2\ell/K$. All subcubes incident to the boundary of $W(x,\ell)$ are called \emph{boundary cubes}. 
Now, remove the annulus of boundary cubes of $W(x, \ell)$ and consider the following operation with the remaining \emph{central box} of side length $2 \ell - \frac{4\ell}{K}=\frac{2(K-2)\ell}{K}$:
first, merge all smaller cubes that the central box contains. 
If the side length of the central box is at most $1/2$, then stop. 
Otherwise, tessellate the central box into smaller subcubes of side length $(2 \ell - 4\ell/K)/ K$ (in other words, at each step, we tessellate the resulting central box into $K^d$ subcubes). 
All smaller subcubes incident to the boundary of the central box are put aside, and the remaining subcubes are merged again to form a new central box, for which the operation then is applied again. 
By definition, this operation is applied $\alpha_{\ell} = \alpha(\ell, d) := \lceil \log(2\ell)/\log(K/(K-2))\rceil$ many times. 
All cubes obtained throughout the procedure together form the \emph{cubic net} $N(x,\ell)$. In particular, $N(x,\ell)$ contains:
\begin{itemize}
    \item a central cube containing $x$ of side length at most $1/2$,
    \item boundary cubes, which share a common boundary with $W(x, \ell)$, and
    \item outer cubes, which are all cubes except the central one. In particular, boundary cubes are also outer cubes.
\end{itemize}

\begin{figure}
\centering
\begin{tikzpicture}[line cap=round,line join=round,x=1cm,y=1cm]
\clip(-10.70579928738711,-3.8283301786510413) rectangle (5.774852934823306,4.448477588704503);
\draw [line width=0.8pt] (-6,4)-- (-6,-3);
\draw [line width=0.8pt] (-6,-3)-- (1,-3);
\draw [line width=0.8pt] (1,-3)-- (1,4);
\draw [line width=0.8pt] (1,4)-- (-6,4);
\draw [line width=0.8pt] (-4,4)-- (-4,-3);
\draw [line width=0.8pt] (-3,4)-- (-3,-3);
\draw [line width=0.8pt] (-2,4)-- (-2,-3);
\draw [line width=0.8pt] (-1,-3)-- (-1,4);
\draw [line width=0.8pt] (1,3)-- (-6,3);
\draw [line width=0.8pt] (-6,1)-- (1,1);
\draw [line width=0.8pt] (1,0)-- (-6,0);
\draw [line width=0.8pt] (-6,-2)-- (1,-2);
\draw [line width=0.8pt] (-5,4)-- (-5,-3);
\draw [line width=0.8pt] (0,4)-- (0,-3);
\draw [line width=0.8pt] (-6,2)-- (1,2);
\draw [line width=0.8pt] (-6,-1)-- (1,-1);
\draw [line width=0.8pt] (-2.5,3)-- (-2.5,0.5);
\draw [line width=0.8pt] (-3,4)-- (-2,3);

\draw [decorate,decoration={brace,amplitude=6pt},xshift=0pt,yshift=0pt]
(1,-3) -- (-6,-3) node {};
\begin{scriptsize}
\draw [fill=black] (-2.5,0.5) circle (0.8pt);
\draw [fill=black] (-2.5,3) circle (0.8pt);

\draw [fill=black] (-2.7,0.5) node {\large{$x$}};
\draw [fill=black] (-2.5,-3.5) node {\large{$2\ell$}};
\end{scriptsize}
\end{tikzpicture}
\caption{The case $d = 2$. Since $K(2) = 7$, the division operation divides the square $W(x,\ell)$ into a $7\times 7$ grid and then merges the $25$ non-boundary squares of side length $2\ell/7$. Our choice of $K$ ensures that the diagonal of a square in the grid is shorter than the distance from $x$ to the boundary squares of $N(x,\ell)$.}
\label{fig 2}
\end{figure}
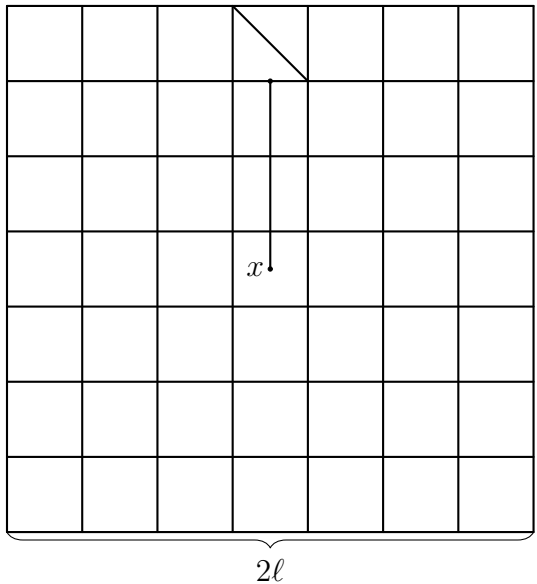

Note that by definition $K\ge 5$ for every dimension. 
Hence, one may readily check that, for every $x\in \mathbb T^d_n$ and $\ell\ge 1$, the number of outer cubes in $N(x, \ell)$ is 
\begin{equation}\label{eq degree}
   \Psi \alpha_{\ell} := (K^d - (K-2)^d) \alpha_{\ell} = O(\log \ell).
\end{equation}

The following observation bounds from above the number of neighbors of a vertex using cubic nets.

\begin{observation}\label{ob distance}
Given a vertex $v\in V(G_n)$ and $\ell\in [1,\ell_{max}]$, $v$ connects by an edge of $G_n$ to at most one vertex in every outer cube in $N(v, \ell)$. Moreover, if there is one vertex embedded prior to $v$ in every boundary cube of $N(v, \ell)$, then $v$ has at most one neighbor outside $W(v, \ell)$.
\end{observation}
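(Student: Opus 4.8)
The plan is to reduce Observation~\ref{ob distance} to a single metric inequality about the cubic net, and then run a short ``this edge should not exist'' argument for each of the two assertions. Throughout, write $\|x-y\|$ for the torus distance on $\mathbb T^d_n$; since $\ell\le\ell_{max}=n^{1/d}/2$, inside the window $W(v,\ell)$ this distance agrees with the Euclidean distance in a lift, so all triangle‑inequality and ``split a geodesic'' manipulations below are legitimate. For a cube $Q$ set $\mathrm{dist}(p_v,Q)=\min_{y\in Q}\|p_v-y\|$.

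\smallskip\noindent\textbf{Geometric core.} The claim is: for every \emph{outer} cube $Q$ of $N(v,\ell)$ we have $\mathrm{dist}(p_v,Q)>\diam(Q)$, and moreover for every \emph{boundary} cube $Q$ of $N(v,\ell)$ we have $\ell>\diam(Q)$. Both follow from the numerical inequality $K-2>2\sqrt d$, which holds because $K=2\lceil\sqrt d\rceil+3$, so $K-2=2\lceil\sqrt d\rceil+1>2\sqrt d$. Indeed, by construction every outer cube of $N(v,\ell)$ is a cube of the ``boundary ring'' at some stage $j\ge 0$, a stage at which the current concentric (with $p_v$) box has half‑side $\ell_j:=\ell\big(\tfrac{K-2}{K}\big)^j$ and is cut into cubes of side $2\ell_j/K$. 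Such a boundary cube has, in some coordinate, a face at $\|\cdot\|_\infty$‑distance $\ell_j-2\ell_j/K=\ell_j\tfrac{K-2}{K}$ from $p_v$, hence $\mathrm{dist}(p_v,Q)\ge \ell_j\tfrac{K-2}{K}$, while $\diam(Q)=\sqrt d\cdot \tfrac{2\ell_j}{K}$; the ratio is $\tfrac{K-2}{2\sqrt d}>1$. For $j=0$ this also gives $\diam(Q)=\tfrac{2\sqrt d\,\ell}{K}<\ell$, which is the second statement since the boundary cubes of $N(v,\ell)$ are exactly the stage‑$0$ ones.

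\smallskip\noindent\textbf{First assertion.} Suppose two distinct neighbors $w,w'$ of $v$ in $G_n$ lie in the same outer cube $Q$. On the path from $v$ to the root the parent of $v$ carries the smallest label, so the vertex of $\{w,w'\}$ with the larger label — call it $w$ — must be a \emph{child} of $v$. At the time $w$ is embedded, both $v$ and $w'$ are already present (each has a smaller label than $w$), so the nearest‑neighbor rule forces $\|p_v-p_w\|\le\|p_{w'}-p_w\|\le\diam(Q)$. But $\|p_v-p_w\|\ge \mathrm{dist}(p_v,Q)>\diam(Q)$ by the geometric core, a contradiction. Hence $v$ has at most one neighbor in each outer cube.

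\smallskip\noindent\textbf{Second assertion.} Assume every boundary cube of $N(v,\ell)$ contains a vertex embedded before $v$, and suppose, for contradiction, that $v$ has a neighbor $u$ with $p_u\notin W(v,\ell)$ that is a \emph{child} of $v$. Pick a torus‑geodesic $\gamma$ from $p_v$ to $p_u$; since $\gamma$ starts at the centre of $W(v,\ell)$ and ends outside it, it meets $\partial W(v,\ell)$ at some point $y$, which lies in (the closure of) some boundary cube $Q$; by hypothesis $Q$ contains a vertex $z$ with a smaller label than $v$, hence than $u$. Splitting $\gamma$ at $y$ and using $\|p_v-y\|\ge\ell$ together with $\|y-p_z\|\le\diam(Q)<\ell$ yields
\[
\|p_u-p_z\|\ \le\ \|p_u-y\|+\diam(Q)\ <\ \|p_u-y\|+\ell\ \le\ \|p_u-y\|+\|y-p_v\|\ =\ \|p_u-p_v\|,
\]
so $z$ is strictly closer to $u$ than $v$ is, and $u$ could not have attached to $v$ — contradiction. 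Therefore every neighbor of $v$ lying outside $W(v,\ell)$ must be the parent of $v$, and $v$ has exactly one parent.

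\smallskip
The only delicate points are bookkeeping: correctly identifying which of two co‑located neighbors is a child (via the label ordering and the fact that the parent has the least label on its root path), and noting that $\ell\le\ell_{max}$ makes the torus metric coincide with the Euclidean one inside $W(v,\ell)$, so the geodesic‑splitting estimates are valid. The geometric inequality $\mathrm{dist}(p_v,Q)>\diam(Q)$ is precisely what the choice $K=2\lceil\sqrt d\rceil+3$ is engineered to provide, so it poses no real obstacle.
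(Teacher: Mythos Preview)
Your proof is correct and follows essentially the same route as the paper's: both hinge on the inequality $\mathrm{dist}(p_v,Q)>\diam(Q)$ for every outer cube $Q$ (equivalently $K-2>2\sqrt d$, which the choice $K=2\lceil\sqrt d\rceil+3$ guarantees), and both finish the second assertion by splitting a geodesic at a point in a boundary cube and applying the triangle inequality. The only cosmetic differences are that you spell out the ``two neighbors in one cube'' contradiction explicitly (the paper leaves it to the reader), and for the second part you intersect the geodesic with $\partial W(v,\ell)$ and use $\ell>\diam(Q)$, whereas the paper intersects it with the interior of a boundary cube and uses the stronger $\mathrm{dist}(p_v,Q)>\diam(Q)$ directly; both choices work. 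One small wording issue: ``on the path from $v$ to the root the parent of $v$ carries the smallest label'' is not literally true (the root does), but your intended and correct point is that the parent is the unique neighbor of $v$ with label smaller than $v$'s, so the larger-labeled of $w,w'$ is necessarily a child.
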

\begin{proof}
For the first statement, it is sufficient to prove that the distance from $v$ to any point in a given outer cube is larger than the diameter of the outer cube itself. Since every outer cube in $N(v, \ell)$ is a boundary cube in $N\left(v, \left(\frac{K-2}{K}\right)^i \ell\right)\subseteq N(v,\ell)$ for some $i\in [\alpha_{\ell}-1]_0$, we show this claim only for boundary cubes. In this case, the distance from $v$ to the boundary of $W(v, (K-2)\ell/K)$ is $(K-2)\ell/K$, and the diameter of any boundary cube of $N(v, \ell)$ is $\sqrt{d} (2\ell/K) < (K-2)\ell/K$ by definition of $K$.

For the second statement, we show that every child of $v$ in $G_n$ (seen as a tree rooted at $0$) is in $W(v,\ell)$. Indeed, if a vertex $u$ is outside $W(v, \ell)$, then choose any point $x$ on the segment $uv$ inside some boundary cube $q$ of $N(v, \ell)$. Let $w$ be a vertex of $G_n$ in $q$ embedded prior to $v$. Then, by the triangle inequality for $uwx$, the fact that $\diam_{E}(q) < |vx|$ and the collinearity of $v, x$ and $u$,
\begin{equation}\label{eq triangle ineq}
    |wu|\le |wx|+|xu|\le \diam_{E}(q)+|xu| < |vx|+|xu| = |vu|,
\end{equation}
thereby proving the observation.
\end{proof}

\section{Proof of Theorem~\ref{thm concentration}}\label{sec:concentration}
In this section, we use Theorem~\ref{typical BDI} for the family of independent random variables $(U_i)_{i\in [n-1]_0}\cup (X_i)_{i\in [n-1]_0}$ where $U_i\sim \mathrm{Unif}(\mathbb T^d_n)$ and $X_i\sim \mathrm{Unif}[0,1]$.
Fix an integer function $\phi = \phi(n)\in [C_0 \log n, n/3]$ for some suitably large $C_0 > 0$ (which is well-defined for all sufficiently large $n$), and denote by $\Gamma$ the event that, 
for every vertex $v\in V(G_n)$ and every point $x\in \mathbb T^d_n$, there are at most $\phi$ vertices $u\in V(G_n)\setminus v$ for which $d_{\mathbb T^d_n}(u,x)\le d_{\mathbb T^d_n}(u, \{w\in V(G_n)\setminus v: X_w < X_u\})$. 
Otherwise said, $\Gamma$ is equivalent to the non-existence of a pair $(v,x)\in V(G_n)\times \mathbb T^d_n$ such that a vertex $v_x$ at position $x$ that arrives before all vertices in $V(G_n)\setminus v$ would have degree more than $\phi$ in the $d$-NN tree generated by $v_x\cup (V(G_n)\setminus v)$ (where the order of arrival of the vertices in $V(G_n)\setminus v$ is the same as in $G_n$). 
Note that, in particular, the event $\Gamma$ includes the event that the maximum degree of $G_n$ is at most $\phi+1$ 
(since we allow the position of $x$ to coincide with the position of any of the vertices of $G_n$). 

\begin{observation}\label{ob sec 4}
For every large enough $n$, $\mathbb P(\Gamma) \ge 1-\exp\left(-\frac{\phi \log \phi}{5}\right)$.
\end{observation}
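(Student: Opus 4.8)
The plan is to reformulate $\Gamma$ in terms of a single ``virtual'' vertex and then bound its degree by a cubic net that covers the whole torus. By the reformulation recorded just before the statement, $\overline{\Gamma}$ occurs precisely when there is a pair $(v,x)\in V(G_n)\times\mathbb T^d_n$ for which the virtual vertex $v_x$, placed at $x$ and arriving before every vertex of $V(G_n)\setminus v$, has more than $\phi(n)$ children in the $d$-NN tree on $\{v_x\}\cup(V(G_n)\setminus v)$; so the task is to show that, with the claimed probability, no such pair exists.

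First I would obtain a deterministic geometric bound on $\deg(v_x)$ via the cubic net $N(x,\ell_{max})$ with $\ell_{max}=n^{1/d}/2$. This net partitions $\mathbb T^d_n$ into $C_d\alpha_{\ell_{max}}=O_d(\log n)$ outer cubes (by~\eqref{eq degree}) together with a central cube which, since every central box produced by the construction is concentric with $W(x,\ell_{max})$, equals $W(x,s)$ for some $s\le 1/4$ and is therefore contained in $B_2(x,\sqrt d/4)$. Repeating the argument in the first part of the proof of Observation~\ref{ob distance} — whose only geometric input is that the choice $K=2\lceil\sqrt d\rceil+3$ forces the distance from $x$ to any outer cube to exceed that cube's diameter — shows that $v_x$ has at most one child in each outer cube irrespective of the arrival times: if two vertices of a common outer cube $q$ were both children of $v_x$, then the later-arriving one would be closer to the earlier one (a predecessor of it) than to $x$, whereas $\mathrm{diam}(q)<d(v_x,q)$. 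Hence every child of $v_x$ lies either in an outer cube or in the central cube, and therefore
\[
\deg(v_x)\ \le\ C_d\alpha_{\ell_{max}}+\bigl|V(G_n)\cap B_2(x,\sqrt d/4)\bigr|.
\]
Since $\phi(n)\gg\log n$, for large $n$ we have $C_d\alpha_{\ell_{max}}\le\phi(n)/2$, so on $\overline{\Gamma}$ there is some $x$ with $\bigl|V(G_n)\cap B_2(x,\sqrt d/4)\bigr|>\phi(n)/2$.

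Next I would control the largest number of vertices of $G_n$ in a ball of bounded radius by discretization and a union bound. Fix a maximal $1$-separated set $\mathcal Y\subseteq\mathbb T^d_n$, so $|\mathcal Y|\le 2^d n/\mathrm{Vol}(B_d(1))=:C_d'n$ and every $x$ lies within distance $1$ of some $y\in\mathcal Y$, whence $B_2(x,\sqrt d/4)\subseteq B_2(y,R)$ with $R:=\sqrt d/4+1$. For fixed $y$ and large $n$ (so that the ball does not wrap around), $\bigl|V(G_n)\cap B_2(y,R)\bigr|\sim\mathrm{Bin}(n,\lambda_d/n)$ with $\lambda_d:=\mathrm{Vol}(B_d(1))R^d=O_d(1)$, and the crude bound $\mathbb P(\mathrm{Bin}(n,p)\ge m)\le\binom{n}{m}p^m\le(enp/m)^m$ with $m=\phi(n)/2$ gives $\mathbb P\le(2e\lambda_d/\phi(n))^{\phi(n)/2}\le\phi(n)^{-\phi(n)/4}$ once $\phi(n)$ is large. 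A union bound over $\mathcal Y$ then yields
\[
\mathbb P(\overline{\Gamma})\ \le\ C_d'\,n\,\exp\!\left(-\tfrac14\,\phi(n)\log\phi(n)\right),
\]
and since $\phi(n)\log\phi(n)\gg\log n\gg\log(C_d'n)$ the prefactor is absorbed, giving $\mathbb P(\overline{\Gamma})\le\exp(-\phi(n)\log\phi(n)/5)$ for all large $n$.

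The binomial tail estimate and the covering step are routine; the point requiring genuine care — and the likely main obstacle — is making the cubic-net bound on $\deg(v_x)$ rigorous on the torus at the extremal scale $\ell=\ell_{max}$: one must verify that $N(x,\ell_{max})$ genuinely partitions $\mathbb T^d_n$ and that ``the distance from $x$ to an outer cube exceeds that cube's diameter'' persists across the wraparound for the outermost shell, which is exactly what the value of $K$ is designed to guarantee. Once that is in place the ``at most one child per outer cube'' bound is immediate and the rest is bookkeeping.
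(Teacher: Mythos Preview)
Your proposal is correct and follows essentially the same approach as the paper: bound the degree of the virtual vertex $v_x$ via the cubic net $N(x,\ell_{max})$, giving $O_d(\log n)$ children in outer cubes plus the vertices in the central cube of side $\le 1/2$, and then use a discretization plus union bound plus binomial/Poisson tail to rule out any small region containing $\gtrsim\phi(n)/2$ vertices. The only cosmetic differences are that the paper covers the central cube by axis-parallel unit cubes from shifted tessellations rather than balls around a $1$-separated net, and phrases the tail bound slightly differently; your concern about the torus wraparound at $\ell=\ell_{max}$ is legitimate to flag, but the inequality $\sqrt d\cdot 2\ell_{max}/K<(K-2)\ell_{max}/K$ that underlies Observation~\ref{ob distance} holds on $\mathbb T^d_n$ as well, so no new idea is needed there.
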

\begin{proof}
Recall the definition of a cubic net introduced in Section~\ref{section:prelim}. 
By construction, the vertex $v_x$ in position $x$ may have at most one neighbor in each of the $\Psi\alpha_{n^{1/d}/2} = O(\log n)$ outer cubes of the net $N(x,n^{1/d}/2)$. 

We prove that, by choosing $C_0$ sufficiently large, a.a.s.\ there is no axis-parallel cube of side length $1/2$ containing $t = \lfloor \phi/2\rfloor$ vertices of $G_n$. 
Set $n_1 = \lfloor n^{1/d}\rfloor$ and fix a tessellation $\mathcal F$ in $\mathbb T^d_n$ into cubes of side length $s = n^{1/d}/n_1$. Note that every axis-parallel cube of side length $1/2$ is contained in a cube of side length $s$ in one of the tessellations, obtained by translation of $\mathcal F$ by some vector among $(s/2)\cdot \{0,1\}^d$. Since in total there are $2^d\cdot (n_1)^d = (2^d+o(1))n$ such cubes, a union bound gives
\begin{align*}
\mathbb P(\overline{\Gamma})
&\le\hspace{0.3em} (2^d + o(1))n \mathbb P(|[0,s]^d\cap V(G_n)|\ge t)\\ 
&=\hspace{0.3em} (2^d+o(1))n \exp(-(1+o(1))) \dfrac{(1+o(1))^{t}}{t!}\\
&\le\hspace{0.3em} \exp\left(-\frac{t \log t}{2}\right)\le \exp\left(-\frac{\phi \log \phi}{5}\right),
\end{align*}
and the proof of the lemma is finished.
\end{proof}

\noindent
Note that Observation~\ref{ob sec 4} already shows the upper bound on the maximum degree in Theorem~\ref{cor:maxdegree}. Next, we use it to derive Theorem~\ref{thm concentration}.

\begin{proof}[Proof of Theorem~\ref{thm concentration}:]
We apply Theorem~\ref{typical BDI} for the family $(U_i)_{i\in [n-1]_0}\cup (X_i)_{i\in [n-1]_0}$ and the event $\Gamma$ defined above. 
Moreover, since the tree $G_n$ is a deterministic function of the above random variables (ties being broken in an arbitrary way), define
\begin{equation*}
    f:((u_i)_{0\le i\le n-1},(x_i)_{0\le i\le n-1})\in (\mathbb T^d_n)^n\times [0,1]^n\mapsto g\left(G_n \text{ generated by } \{U_i = u_i, X_i = x_i\}\text{ for } i\in [n-1]_0\right).
\end{equation*}

Then, for every vector $(\mathbf{u}_1, \mathbf{x}_1)\in \Gamma$ and every $(\mathbf{u}_2, \mathbf{x}_2)\in (\mathbb T^d_n)^n\times [0,1]^n$ which differs from $(\mathbf{u}_1, \mathbf{x}_1)$ in exactly one coordinate, one has by definition of $\Gamma$ that 
$$|f(\mathbf{u}_1, \mathbf{x}_1) - f(\mathbf{u}_2, \mathbf{x}_2)|\le c := L(2\phi+2)\le Ln.$$ 
Indeed, if an edge $e$ is in the symmetric difference of the trees generated by $(\mathbf{u}_1, \mathbf{x}_1)$ and $(\mathbf{u}_2, \mathbf{x}_2)$, 
then it is incident to the vertex whose arrival time or position has been modified, and in both trees this vertex has degree at most $\phi+1$. 
If $(\mathbf{u}_1, \mathbf{x}_1)$ does not necessarily belong to $\Gamma$, then
\begin{equation*}
    |f(\mathbf{u}_1, \mathbf{x}_1)-f(\mathbf{u}_2, \mathbf{x}_2)|\le Ln.
\end{equation*}

Now, set $\gamma_k = \gamma := (Ln)^{-1}$. 
We conclude that $e_k = e := \gamma(Ln-c)\le 1$ and thus $(c+e)^2\le (2L\phi +2L+1)^2 \le 5 L^2 \phi^2$. We deduce by Theorem~\ref{typical BDI} that there is an event $\mathcal B$ with probability at most $2n\mathbb P(\overline{\Gamma}) \gamma^{-1} \le \exp(-\tfrac{1}{10}\phi\log \phi)$ such that $\overline{\mathcal B}\subseteq \Gamma$ and, for every $t\ge 0$,
\begin{equation*}
    \mathbb P(|f((U_i)\cup (X_i)) - \mathbb E[f((U_i)\cup (X_i))]|\ge t \text{ and }\overline{\mathcal B})\le 2\exp\left(-\dfrac{t^2}{4n\cdot 5 L^2 \phi^2}\right).
\end{equation*}

Since $\mathbb P(\mathcal B)\le \exp(-\tfrac{1}{10}\phi\log \phi)$, we conclude that
\begin{align*}
\mathbb P[|g(G_n) - \mu|\ge t]
&=\hspace{0.3em} \mathbb P(|f((U_i)\cup (X_i)) - \mathbb E[f((U_i)\cup (X_i))]|\ge t \text{ and }\overline{\mathcal B}) + \mathbb P(\mathcal B)\\
&\le\hspace{0.3em} 2\exp\left(- \dfrac{t^2}{20nL^2\phi^2}\right) + \exp\left(-\frac{\phi\log \phi}{10}\right),
\end{align*}
and the proof is completed.
\end{proof}

\section{The degree sequence and the maximum degree}\label{sec degrees}

In this section, we abuse notation and, unless ambiguity arises, identify a vertex in $V(G_n) = [n-1]_0$ with its position in $\mathbb T^d_n$ (recall that, in general, we also identify a vertex with its arrival time). 
The section is dedicated to proving Theorem~\ref{thm main 1} and the lower bound in Theorem~\ref{cor:maxdegree}.

\subsection{Upper bound on the number of vertices of given degree}
We first give a high level overview of the proof. 
It goes roughly as follows: We embed the vertices of $G_n$ one by one in $\mathbb T^d_n$. 
At every step, we choose a positive real number $\ell = \ell(v)$ appropriately so that no child of $v$ in $G_n$ is situated outside $W(v,\ell)$. 
Note that minimizing the value of $\ell(v)$ is the main difficulty in the proof, and it requires several technical twists and turns. 
After choosing $\ell$, we divide the children of $v$ into two groups: 
those positioned in the central cube of $N(v,\ell)$ and others positioned in outer cubes. 
The number of children of the first type is bounded from above by an easy concentration argument, while for children of the second type, we rely on some geometric properties of cubic nets.

Let us proceed to the details.
We fix $k\ge 0$ and assume that $n = 2^{dk}$ (the idea behind this assumption is to ensure that a subsequent divisibility condition is satisfied and avoid the excessive use of upper and lower integer parts). 
Fix a family of tessellations $(\mathcal F_i)_{i=0}^k$ of side lengths $(2^i)_{i=0}^k$, respectively. Note that, on the event that a unit cube $q$ in $\mathbb T^d_n$ contains $s$ vertices of $G_n$, there exists a cube of $\mathcal F_0$ that contains at least $s/2^d$ of these vertices and intersects $q$. 
Define $s_{max} := \lfloor \log n/3\log\log n \rfloor$. The following easy observation uses standard concentration results to show that ``most unit cubes do not contain too many vertices'' (with the idea to bound the number of neighbors of a vertex in its central cube from above):

\begin{observation}\label{ob close neighbors}
A.a.s.\ for every $s\in [s_{max}]$, the number of vertices $v$ of $G_n$ for which $|W(v, 1/2)\cap V(G_n)|\ge 2^d s$ is at most $3^{d}\cdot en/(s-1)!$.
\end{observation}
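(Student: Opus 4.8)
The plan is to fix $s \in [s_{max}]$ and bound the number of "heavy" unit windows via a first-moment/concentration argument, then take a union bound over the (few) values of $s$. First I would set up the reduction to a fixed tessellation: as observed just before the statement, if $|W(v,1/2) \cap V(G_n)| \ge 2^d s$ for some vertex $v$, then some cube $q$ of $\mathcal F_0$ intersecting $W(v,1/2)$ contains at least $s$ vertices of $G_n$. Moreover a given cube of $\mathcal F_0$ (side length $1$, hence area $1$) intersects at most $3^d$ windows of the form $W(v,1/2)$ around points $v$ lying in cubes of $\mathcal F_0$ adjacent to it — more precisely, if a cube $q' \in \mathcal F_0$ contains a vertex $v$ with a heavy window, then one of the $3^d$ cubes in the closed neighborhood of $q'$ (including $q'$ itself) is $s$-heavy. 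So it suffices to show that a.a.s., for every $s \in [s_{max}]$, the number of $s$-heavy cubes of $\mathcal F_0$ (cubes containing $\ge s$ vertices of $G_n$) is at most $\tfrac{2 \cdot 3^{d-1}}{3^d}\cdot\frac{n}{(s-1)!}$ or some comparable bound, which after multiplying by $3^d$ (each heavy window is "charged" to one heavy cube among $3^d$, and each heavy cube is charged by at most — actually one wants: number of bad vertices $\le$ number of bad cubes times maximal occupancy, but here we count cubes) gives the stated $2\cdot 3^{d-1}n/(s-1)!$. I would be a little careful here to track the combinatorial constant; the factor $3^{d-1}$ versus $3^d$ suggests the intended counting charges each bad vertex to the $\mathcal F_0$-cube containing it and then groups cubes, so I would phrase the argument as: the number of vertices $v$ with a heavy window is at most $3^d$ times the number of $s$-heavy cubes, hence it is enough to bound the number of $s$-heavy cubes by $\tfrac23 n/(s-1)!$.

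The core estimate is: for a fixed cube $q \in \mathcal F_0$, $\mathbb P(|q \cap V(G_n)| \ge s) \le \binom{n}{s} n^{-s} \le \frac{1}{s!}$, since each of the $n$ vertices lands in $q$ independently with probability $1/n$ (the cube has area $1$ in $\mathbb T^d_n$ of total area $n$). Hence the expected number $Z_s$ of $s$-heavy cubes of $\mathcal F_0$ satisfies $\mathbb E Z_s \le n / s!$. Now I need concentration of $Z_s$ around its mean, uniformly over $s \in [s_{max}]$. The cleanest route: $Z_s$ is a function of the $n$ i.i.d.\ positions $(U_i)$, and changing one position changes $Z_s$ by at most $2$ (it can move at most one cube from heavy to light and one from light to heavy), so by McDiarmid's inequality (Lemma~\ref{bentkus}) $\mathbb P(Z_s \ge \mathbb E Z_s + t) \le \exp(-t^2/(8n))$. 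Choosing $t$ of order, say, $\sqrt{n}\log n$ makes this superpolynomially small and still negligible compared to the target bound $\tfrac23 n/(s-1)! = \tfrac{2s}{3}\cdot n/s!$: for $s \le s_{max} = \lfloor \log n / 3\log\log n\rfloor$ one has $s! \le (\log n)^{s_{max}} \le n^{1/3}$, so $n/s! \ge n^{2/3} \gg \sqrt{n}\log n$. Therefore a.a.s.\ $Z_s \le \mathbb E Z_s + t \le n/s! + o(n/s!) \le \tfrac23 \cdot n/(s-1)!$ for every such $s$ once $n$ is large (here I use $n/s! = \frac{s}{1}\cdot\frac{n}{s!}\cdot\frac1s$, wait — more simply $n/(s-1)! = s \cdot n/s! \ge 2n/s!$ for $s \ge 2$, and the $s=1$ case is trivial since every nonempty cube trivially has $\le n$ vertices and $n/(0)! = n$). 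I'd handle $s=1$ separately (the bound $2\cdot 3^{d-1} n$ is vacuous there) and do the union bound over $s \in \{2,\dots,s_{max}\}$, costing only a factor $s_{max} = O(\log n)$, which is absorbed.

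The main obstacle, such as it is, is bookkeeping rather than mathematical depth: getting the geometric charging constant exactly right (ensuring the final count is $2\cdot 3^{d-1}n/(s-1)!$ and not, say, $3^d n/(s-1)!$) requires being precise about how many windows a single $\mathcal F_0$-cube can be responsible for and in which direction the inequality goes. The other point needing a little care is making the concentration uniform over all $s \le s_{max}$ simultaneously while keeping the deviation term $o(n/s!)$ even for the largest $s$, where $n/s!$ is smallest (of order $n^{2/3}$); this is exactly why the threshold $s_{max} = \lfloor \log n/3\log\log n\rfloor$ is chosen — it keeps $s! \le n^{1/3}$, leaving room for a $\sqrt n$-order fluctuation. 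With those two points pinned down, the rest is the routine Chernoff/McDiarmid computation sketched above.
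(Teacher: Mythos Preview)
Your charging step has a genuine gap. You claim that ``the number of vertices $v$ with a heavy window is at most $3^d$ times the number of $s$-heavy cubes'', but this confuses cubes with vertices. If $v$ has a heavy window then the cube $q_v\in\mathcal F_0$ containing $v$ is adjacent to some heavy cube, so $q_v$ lies in the $3^d$-neighbourhood of $\mathcal S_s$; that neighbourhood contains at most $3^d Z_s$ \emph{cubes}, but each such cube may hold many vertices. Hence the number of bad vertices is bounded by $\sum_{q'\sim q,\,q\in\mathcal S_s}|q'\cap V(G_n)|$, not by $3^d Z_s$. Your own parenthetical (``number of bad cubes times maximal occupancy'') is the honest bound, and the maximal occupancy is not $O(1)$.

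The paper repairs exactly this point by tracking vertices rather than cubes: it sets $g_q(S)=\mathds 1_{|q\cap S|\ge s}\,|q\cap S|$ and $f=\sum_{q\in\mathcal F_0}g_q$, so that $f$ counts the total number of vertices lying in heavy cubes. This $f$ is $s$-Lipschitz in each coordinate (moving one point can change the occupancy of two cubes, each by $1$, and can flip the indicator, costing at most $s$), and $\mathbb E f\ge n/(3(s-1)!)$, so McDiarmid gives $f\le 2n/(3(s-1)!)$ with probability $1-\exp(-n^{\Omega(1)})$. The passage from $f$ to the count of bad vertices then uses the inequality
\[
\sum_{\vec x\in\{-1,0,1\}^d}\sum_{q\in\mathcal S_s}|(q+\vec x)\cap V(G_n)|\;\le\;3^d\sum_{q\in\mathcal S_s}|q\cap V(G_n)|=3^d f,
\]
which is not automatic: it relies on the fact that if $q+\vec x\notin\mathcal S_s$ then $|(q+\vec x)\cap V(G_n)|<s\le |q\cap V(G_n)|$, so each non-heavy neighbour's contribution can be charged to the heavy cube $q$ itself, while heavy neighbours are already counted in $f$. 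This is the missing idea in your argument. Your concentration step for $Z_s$ is fine, but $Z_s$ alone cannot close the estimate without also controlling the occupancies of neighbouring cubes, which is precisely what $f$ does.
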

\begin{proof}
First, we condition on the a.s.\ event that no vertex lies on the boundary of any of the cubes in $\mathcal F_0$. 
Denote $\mathcal S_s = \{q\in \mathcal F_0: |q\cap V(G_n)|\ge s\}$. Note that every unit cube containing at least $2^d s$ vertices of $G_n$ intersects a cube of $\mathcal S_s$. 
Our goal is to provide an a.a.s.\ upper bound for the number of vertices, embedded at distance at most 1 to $\mathcal S_s$.

Our main tool is the bounded differences inequality (Lemma~\ref{bentkus}). For every cube $q\in \mathcal F_0$, define $g_q: S \in (\mathbb T^d_n)^n \mapsto \mathds{1}_{|q\cap S|\ge s} |q\cap S|$, and let $f: S\in (\mathbb T^d_n)^n \mapsto \sum_{q\in \mathcal F_0} g_q(S)$. 
It is easy to notice that $f$ satisfies the Lipschitz condition~\eqref{Lip cond} with $c_1 = \dots = c_n = s$. Moreover, for every large enough $n$,
\begin{equation*}
    \mathbb E[g_q(V(G_n))] = \sum_{i\ge s} i\binom{n}{i}\dfrac{1}{n^i}\left(1-\dfrac{1}{n}\right)^{n-i} \in \left[\dfrac{1}{3(s-1)!}, \frac{e}{2(s-1)!}\right],
\end{equation*}
where we used that
\[\sum_{i\ge s} i\binom{n}{i}\dfrac{1}{n^i}\left(1-\dfrac{1}{n}\right)^{n-i} \le \frac{1+o(1)}{e}\sum_{i\ge s} \frac{1}{(i-1)!}\le \frac{1}{2(s-1)!}\sum_{j\ge 1} \frac{1}{j!}\le \frac{e}{2(s-1)!}.\]
Consequently,
\begin{equation*}
    \mathbb E[f(V(G_n))] = |\mathcal F_0| \mathbb E[ g_q(V(G_n))] \ge \dfrac{n}{3(s-1)!}.
\end{equation*}

Moreover, by Lemma~\ref{bentkus} applied to $V(G_n)$ and $f$, 
\begin{equation*}
    \mathbb P(f(V(G_n))\ge 2\mathbb E[f(V(G_n)))]\le \exp\left(-\dfrac{n}{18s!^2}\right) = \exp(-n^{\Omega(1)}).
\end{equation*}
Thus, a.a.s.\ the cubes in $\mathcal S_s$ contain at most $en/(s-1)!$ 
vertices. Hence, the number of vertices $v\in V(G_n)$ such that $W(v,1/2)\cap (\cup_{q\in \mathcal S_s} q) \neq \emptyset$ is at most $3^d\cdot (en/(s-1)!)$ with probability $1 - \exp(-n^{\Omega(1)})$: indeed, denoting by $q+\vec{x}$ the translation of the cube $q$ by a vector $\vec{x}$, we get
\begin{equation*}
    |\{v\in V(G_n): W(v,1/2)\cap \left(\cup_{q\in \mathcal S_s} q\right) \neq \emptyset\}|\le \sum_{\vec{x}\in \{-1,0,1\}^d} \sum_{q\in \mathcal S_s} |(q+\vec{x})\cap V(G_n)|\le 3^d \sum_{q\in \mathcal S_s} |q\cap V(G_n)|.
\end{equation*}
A union bound over the complementary events for all values $s\in [s_{\max}]$ finishes the proof.
\end{proof}

Recall that the cubic net $N(v, \ell)$ was designed so that, conditionally on having at least one vertex $w < v$ in every boundary cube of the net, the children of the central vertex may only be situated in $W(v, \ell)$ (with the additional restriction of having at most one child per outer cube). 
With Observation~\ref{ob close neighbors} in hand, it remains to take care of the neighbors of $v$ in the outer cubes of $N(v, \ell)$, and in particular to find the correct choice of $\ell$ to satisfy the above requirement regarding the boundary cubes.

Define $\Gamma(v,\ell)$ to be the event that, at the moment of embedding the vertex $v$ in $\mathbb T^d_n$, there is at least one vertex in every boundary cube of $N(v,\ell)$. 
The difficulty of finding the correct value of $\ell$, for which $\Gamma(v, \ell)$ holds, stems from the fact that the events $(\Gamma(v, \ell))_{v, \ell}$ are not independent for different vertices. 
To circumvent this, divide the vertices into groups of $[2^{(i-1)d}, 2^{id})_{i\in [k]}$ (we discard the root for convenience since the degree of a single vertex does not matter in the sequel). 
We define new events, $\Gamma'(v,\ell)$, as follows: if $v\in [2^{(i-1)d}, 2^{id})$, then $\Gamma'(v, \ell)$ is the event of having at least one vertex from $[2^{(i-1)d}-1]_0$ in every boundary cube of the cubic net $N(v, \ell)$. 
By definition, $\Gamma'(v, \ell)\subseteq \Gamma(v, \ell)$, but this time the events $(\Gamma'(v, \ell))_{v\in [2^{(i-1)d}, 2^{id})}$ are independent conditionally on the positions of the vertices in $[2^{(i-1)d}-1]_0$. 
Another assumption that we impose is to choose $\ell$ of the form $K\cdot 2^{i+1}$ for $i\ge 0$ (recall that $K = 2\lceil\sqrt{d}\rceil + 3$). In this way, we ensure that the side of a boundary cube of $N(v, \ell)$ is $2^{i+1}$.

The main idea is to associate to every point $x\in \mathbb T^d_n$ and every boundary cube $q$ of $N(x,K\cdot 2^{i+1})$ a smaller cube of side length $2^i$ belonging to $\mathcal F_i$ which is completely contained in $q$ (arbitrarily in case there are more choices). 
We show that, in any case, no cube of $\mathcal F_i$ is  associated to ``too many points'' $x\in \mathbb T^d_n$. 
We proceed to the details: for every $i\le \lfloor\log_2(\ell_{max}/K) - 1\rfloor$ and every point $x$ in $\mathbb T^d_n$, 
we associate a family $\mathcal C(x,i)$ of $\Psi = K^d - (K-2)^d$ cubes of $\mathcal F_i$ with the property that every boundary cube of $N(x, K\cdot 2^{i+1})$ contains exactly one cube of $\mathcal C(x,i)$. 
Note that for one fixed $x$, all such cubes are disjoint by construction, and for $x, x' \in \mathbb T_n^d$ close to each other, it is possible that $\mathcal C(x, i) \cap \mathcal C(x',i) \neq \emptyset$, see Figure~\ref{fig 5}. 
For every cube $q\in \mathcal F_i$, set $\mathcal R_i(q)$ to be the region $\{x\in \mathbb T^d_n:\hspace{0.2em} q\in \mathcal C(x,i)\}$. It is easy to see that any choice of $(\mathcal C(x,i))_{x,i}$ implies that $(\mathcal R_i(q))_{i, q\in \mathcal F_{i}}$ are all measurable subsets of $\mathbb T^d_n$. We now show the intuitively clear fact that $|\mathcal R_i(q)|$ is not too large.

\begin{figure}[ht]
\centering
\begin{tikzpicture}[line cap=round,scale=0.7,line join=round,x=1cm,y=1cm]
\clip(-10.9,-6.2) rectangle (12.1,8.2);
\fill[line width=0.8pt,color=black,fill=black,fill opacity=0.10000000149011612] (-6,7.5) -- (-6,6.5) -- (-5,6.5) -- (-5,7.5) -- cycle;
\fill[line width=0.8pt,color=black,fill=black,fill opacity=0.10000000149011612] (-4,6.5) -- (-3,6.5) -- (-3,7.5) -- (-4,7.5) -- cycle;
\fill[line width=0.8pt,color=black,fill=black,fill opacity=0.10000000149011612] (-2,6.5) -- (-1,6.5) -- (-1,7.5) -- (-2,7.5) -- cycle;
\fill[line width=0.8pt,color=black,fill=black,fill opacity=0.10000000149011612] (0,6.5) -- (1,6.5) -- (1,7.5) -- (0,7.5) -- cycle;
\fill[line width=0.8pt,color=black,fill=black,fill opacity=0.10000000149011612] (2,6.5) -- (3,6.5) -- (3,7.5) -- (2,7.5) -- cycle;
\fill[line width=0.8pt,color=black,fill=black,fill opacity=0.10000000149011612] (4,6.5) -- (5,6.5) -- (5,7.5) -- (4,7.5) -- cycle;
\fill[line width=0.8pt,color=black,fill=black,fill opacity=0.10000000149011612] (6,6.5) -- (7,6.5) -- (7,7.5) -- (6,7.5) -- cycle;
\fill[line width=0.8pt,color=black,fill=black,fill opacity=0.10000000149011612] (6,4.5) -- (7,4.5) -- (7,5.5) -- (6,5.5) -- cycle;
\fill[line width=0.8pt,color=black,fill=black,fill opacity=0.10000000149011612] (6,2.5) -- (7,2.5) -- (7,3.5) -- (6,3.5) -- cycle;
\fill[line width=0.8pt,color=black,fill=black,fill opacity=0.10000000149011612] (6,0.5) -- (7,0.5) -- (7,1.5) -- (6,1.5) -- cycle;
\fill[line width=0.8pt,color=black,fill=black,fill opacity=0.10000000149011612] (6,-1.5) -- (7,-1.5) -- (7,-0.5) -- (6,-0.5) -- cycle;
\fill[line width=0.8pt,color=black,fill=black,fill opacity=0.10000000149011612] (6,-3.5) -- (7,-3.5) -- (7,-2.5) -- (6,-2.5) -- cycle;
\fill[line width=0.8pt,color=black,fill=black,fill opacity=0.10000000149011612] (6,-5.5) -- (7,-5.5) -- (7,-4.5) -- (6,-4.5) -- cycle;
\fill[line width=0.8pt,color=black,fill=black,fill opacity=0.10000000149011612] (4,-5.5) -- (5,-5.5) -- (5,-4.5) -- (4,-4.5) -- cycle;
\fill[line width=0.8pt,color=black,fill=black,fill opacity=0.10000000149011612] (2,-5.5) -- (3,-5.5) -- (3,-4.5) -- (2,-4.5) -- cycle;
\fill[line width=0.8pt,color=black,fill=black,fill opacity=0.10000000149011612] (0,-5.5) -- (1,-5.5) -- (1,-4.5) -- (0,-4.5) -- cycle;
\fill[line width=0.8pt,color=black,fill=black,fill opacity=0.10000000149011612] (-2,-5.5) -- (-1,-5.5) -- (-1,-4.5) -- (-2,-4.5) -- cycle;
\fill[line width=0.8pt,color=black,fill=black,fill opacity=0.10000000149011612] (-4,-5.5) -- (-3,-5.5) -- (-3,-4.5) -- (-4,-4.5) -- cycle;
\fill[line width=0.8pt,color=black,fill=black,fill opacity=0.10000000149011612] (-6,-5.5) -- (-5,-5.5) -- (-5,-4.5) -- (-6,-4.5) -- cycle;
\fill[line width=0.8pt,color=black,fill=black,fill opacity=0.10000000149011612] (-6,-3.5) -- (-5,-3.5) -- (-5,-2.5) -- (-6,-2.5) -- cycle;
\fill[line width=0.8pt,color=black,fill=black,fill opacity=0.10000000149011612] (-6,-1.5) -- (-5,-1.5) -- (-5,-0.5) -- (-6,-0.5) -- cycle;
\fill[line width=0.8pt,color=black,fill=black,fill opacity=0.10000000149011612] (-6,0.5) -- (-5,0.5) -- (-5,1.5) -- (-6,1.5) -- cycle;
\fill[line width=0.8pt,color=black,fill=black,fill opacity=0.10000000149011612] (-6,2.5) -- (-5,2.5) -- (-5,3.5) -- (-6,3.5) -- cycle;
\fill[line width=0.8pt,color=black,fill=black,fill opacity=0.10000000149011612] (-6,4.5) -- (-5,4.5) -- (-5,5.5) -- (-6,5.5) -- cycle;
\draw [line width=0.8pt] (-6.5,8)-- (7.5,8);
\draw [line width=0.8pt] (7.5,8)-- (7.5,-6);
\draw [line width=0.8pt] (7.5,-6)-- (-6.5,-6);
\draw [line width=0.8pt] (-6.5,-6)-- (-6.5,8);
\draw [line width=0.8pt] (-4.5,6)-- (5.5,6);
\draw [line width=0.8pt] (5.5,6)-- (5.5,-4);
\draw [line width=0.8pt] (5.5,-4)-- (-4.5,-4);
\draw [line width=0.8pt] (-4.5,-4)-- (-4.5,6);
\draw [line width=0.8pt] (-6.5,6)-- (-4.5,6);
\draw [line width=0.8pt] (-4.5,8)-- (-4.5,6);
\draw [line width=0.8pt] (-6.5,4)-- (-4.5,4);
\draw [line width=0.8pt] (-6.5,2)-- (-4.5,2);
\draw [line width=0.8pt] (-6.5,0)-- (-4.5,0);
\draw [line width=0.8pt] (-6.5,-2)-- (-4.5,-2);
\draw [line width=0.8pt] (-6.5,-4)-- (-4.5,-4);
\draw [line width=0.8pt] (-4.5,-4)-- (-4.5,-6);
\draw [line width=0.8pt] (-2.5,-4)-- (-2.5,-6);
\draw [line width=0.8pt] (-0.5,-4)-- (-0.5,-6);
\draw [line width=0.8pt] (1.5,-4)-- (1.5,-6);
\draw [line width=0.8pt] (3.5,-4)-- (3.5,-6);
\draw [line width=0.8pt] (5.5,-4)-- (5.5,-6);
\draw [line width=0.8pt] (5.5,-4)-- (7.5,-4);
\draw [line width=0.8pt] (5.5,-2)-- (7.5,-2);
\draw [line width=0.8pt] (5.5,0)-- (7.5,0);
\draw [line width=0.8pt] (5.5,2)-- (7.5,2);
\draw [line width=0.8pt] (5.5,4)-- (7.5,4);
\draw [line width=0.8pt] (5.5,6)-- (7.5,6);
\draw [line width=0.8pt] (5.5,6)-- (5.5,8);
\draw [line width=0.8pt] (3.5,6)-- (3.5,8);
\draw [line width=0.8pt] (1.5,6)-- (1.5,8);
\draw [line width=0.8pt] (-0.5,6)-- (-0.5,8);
\draw [line width=0.8pt] (-2.5,6)-- (-2.5,8);
\draw [line width=0.8pt,color=black] (-6,7.5)-- (-6,6.5);
\draw [line width=0.8pt,color=black] (-6,6.5)-- (-5,6.5);
\draw [line width=0.8pt,color=black] (-5,6.5)-- (-5,7.5);
\draw [line width=0.8pt,color=black] (-5,7.5)-- (-6,7.5);
\draw [line width=0.8pt,color=black] (-4,6.5)-- (-3,6.5);
\draw [line width=0.8pt,color=black] (-3,6.5)-- (-3,7.5);
\draw [line width=0.8pt,color=black] (-3,7.5)-- (-4,7.5);
\draw [line width=0.8pt,color=black] (-4,7.5)-- (-4,6.5);
\draw [line width=0.8pt,color=black] (-2,6.5)-- (-1,6.5);
\draw [line width=0.8pt,color=black] (-1,6.5)-- (-1,7.5);
\draw [line width=0.8pt,color=black] (-1,7.5)-- (-2,7.5);
\draw [line width=0.8pt,color=black] (-2,7.5)-- (-2,6.5);
\draw [line width=0.8pt,color=black] (0,6.5)-- (1,6.5);
\draw [line width=0.8pt,color=black] (1,6.5)-- (1,7.5);
\draw [line width=0.8pt,color=black] (1,7.5)-- (0,7.5);
\draw [line width=0.8pt,color=black] (0,7.5)-- (0,6.5);
\draw [line width=0.8pt,color=black] (2,6.5)-- (3,6.5);
\draw [line width=0.8pt,color=black] (3,6.5)-- (3,7.5);
\draw [line width=0.8pt,color=black] (3,7.5)-- (2,7.5);
\draw [line width=0.8pt,color=black] (2,7.5)-- (2,6.5);
\draw [line width=0.8pt,color=black] (4,6.5)-- (5,6.5);
\draw [line width=0.8pt,color=black] (5,6.5)-- (5,7.5);
\draw [line width=0.8pt,color=black] (5,7.5)-- (4,7.5);
\draw [line width=0.8pt,color=black] (4,7.5)-- (4,6.5);
\draw [line width=0.8pt,color=black] (6,6.5)-- (7,6.5);
\draw [line width=0.8pt,color=black] (7,6.5)-- (7,7.5);
\draw [line width=0.8pt,color=black] (7,7.5)-- (6,7.5);
\draw [line width=0.8pt,color=black] (6,7.5)-- (6,6.5);
\draw [line width=0.8pt,color=black] (6,4.5)-- (7,4.5);
\draw [line width=0.8pt,color=black] (7,4.5)-- (7,5.5);
\draw [line width=0.8pt,color=black] (7,5.5)-- (6,5.5);
\draw [line width=0.8pt,color=black] (6,5.5)-- (6,4.5);
\draw [line width=0.8pt,color=black] (6,2.5)-- (7,2.5);
\draw [line width=0.8pt,color=black] (7,2.5)-- (7,3.5);
\draw [line width=0.8pt,color=black] (7,3.5)-- (6,3.5);
\draw [line width=0.8pt,color=black] (6,3.5)-- (6,2.5);
\draw [line width=0.8pt,color=black] (6,0.5)-- (7,0.5);
\draw [line width=0.8pt,color=black] (7,0.5)-- (7,1.5);
\draw [line width=0.8pt,color=black] (7,1.5)-- (6,1.5);
\draw [line width=0.8pt,color=black] (6,1.5)-- (6,0.5);
\draw [line width=0.8pt,color=black] (6,-1.5)-- (7,-1.5);
\draw [line width=0.8pt,color=black] (7,-1.5)-- (7,-0.5);
\draw [line width=0.8pt,color=black] (7,-0.5)-- (6,-0.5);
\draw [line width=0.8pt,color=black] (6,-0.5)-- (6,-1.5);
\draw [line width=0.8pt,color=black] (6,-3.5)-- (7,-3.5);
\draw [line width=0.8pt,color=black] (7,-3.5)-- (7,-2.5);
\draw [line width=0.8pt,color=black] (7,-2.5)-- (6,-2.5);
\draw [line width=0.8pt,color=black] (6,-2.5)-- (6,-3.5);
\draw [line width=0.8pt,color=black] (6,-5.5)-- (7,-5.5);
\draw [line width=0.8pt,color=black] (7,-5.5)-- (7,-4.5);
\draw [line width=0.8pt,color=black] (7,-4.5)-- (6,-4.5);
\draw [line width=0.8pt,color=black] (6,-4.5)-- (6,-5.5);
\draw [line width=0.8pt,color=black] (4,-5.5)-- (5,-5.5);
\draw [line width=0.8pt,color=black] (5,-5.5)-- (5,-4.5);
\draw [line width=0.8pt,color=black] (5,-4.5)-- (4,-4.5);
\draw [line width=0.8pt,color=black] (4,-4.5)-- (4,-5.5);
\draw [line width=0.8pt,color=black] (2,-5.5)-- (3,-5.5);
\draw [line width=0.8pt,color=black] (3,-5.5)-- (3,-4.5);
\draw [line width=0.8pt,color=black] (3,-4.5)-- (2,-4.5);
\draw [line width=0.8pt,color=black] (2,-4.5)-- (2,-5.5);
\draw [line width=0.8pt,color=black] (0,-5.5)-- (1,-5.5);
\draw [line width=0.8pt,color=black] (1,-5.5)-- (1,-4.5);
\draw [line width=0.8pt,color=black] (1,-4.5)-- (0,-4.5);
\draw [line width=0.8pt,color=black] (0,-4.5)-- (0,-5.5);
\draw [line width=0.8pt,color=black] (-2,-5.5)-- (-1,-5.5);
\draw [line width=0.8pt,color=black] (-1,-5.5)-- (-1,-4.5);
\draw [line width=0.8pt,color=black] (-1,-4.5)-- (-2,-4.5);
\draw [line width=0.8pt,color=black] (-2,-4.5)-- (-2,-5.5);
\draw [line width=0.8pt,color=black] (-4,-5.5)-- (-3,-5.5);
\draw [line width=0.8pt,color=black] (-3,-5.5)-- (-3,-4.5);
\draw [line width=0.8pt,color=black] (-3,-4.5)-- (-4,-4.5);
\draw [line width=0.8pt,color=black] (-4,-4.5)-- (-4,-5.5);
\draw [line width=0.8pt,color=black] (-6,-5.5)-- (-5,-5.5);
\draw [line width=0.8pt,color=black] (-5,-5.5)-- (-5,-4.5);
\draw [line width=0.8pt,color=black] (-5,-4.5)-- (-6,-4.5);
\draw [line width=0.8pt,color=black] (-6,-4.5)-- (-6,-5.5);
\draw [line width=0.8pt,color=black] (-6,-3.5)-- (-5,-3.5);
\draw [line width=0.8pt,color=black] (-5,-3.5)-- (-5,-2.5);
\draw [line width=0.8pt,color=black] (-5,-2.5)-- (-6,-2.5);
\draw [line width=0.8pt,color=black] (-6,-2.5)-- (-6,-3.5);
\draw [line width=0.8pt,color=black] (-6,-1.5)-- (-5,-1.5);
\draw [line width=0.8pt,color=black] (-5,-1.5)-- (-5,-0.5);
\draw [line width=0.8pt,color=black] (-5,-0.5)-- (-6,-0.5);
\draw [line width=0.8pt,color=black] (-6,-0.5)-- (-6,-1.5);
\draw [line width=0.8pt,color=black] (-6,0.5)-- (-5,0.5);
\draw [line width=0.8pt,color=black] (-5,0.5)-- (-5,1.5);
\draw [line width=0.8pt,color=black] (-5,1.5)-- (-6,1.5);
\draw [line width=0.8pt,color=black] (-6,1.5)-- (-6,0.5);
\draw [line width=0.8pt,color=black] (-6,2.5)-- (-5,2.5);
\draw [line width=0.8pt,color=black] (-5,2.5)-- (-5,3.5);
\draw [line width=0.8pt,color=black] (-5,3.5)-- (-6,3.5);
\draw [line width=0.8pt,color=black] (-6,3.5)-- (-6,2.5);
\draw [line width=0.8pt,color=black] (-6,4.5)-- (-5,4.5);
\draw [line width=0.8pt,color=black] (-5,4.5)-- (-5,5.5);
\draw [line width=0.8pt,color=black] (-5,5.5)-- (-6,5.5);
\draw [line width=0.8pt,color=black] (-6,5.5)-- (-6,4.5);
\draw [decorate,decoration={brace,amplitude=6pt},xshift=0pt,yshift=0pt]
(1.5,6) -- (-0.5,6) node {};
\begin{scriptsize}
\draw [fill=black] (0.5,1) circle (1.5pt);
\draw[color=black] (0.75,1.25) node {\large{$x$}};
\draw[color=black] (0.5,5.4) node {\large{$2^{i+1}$}};
\end{scriptsize}
\end{tikzpicture}
\caption{The case $d = 2$ (hence $K = 7$): $\mathcal C(x,i)$ is the set of all grey squares (they all belong to $\mathcal F_i$).}
\label{fig 5}
\end{figure}
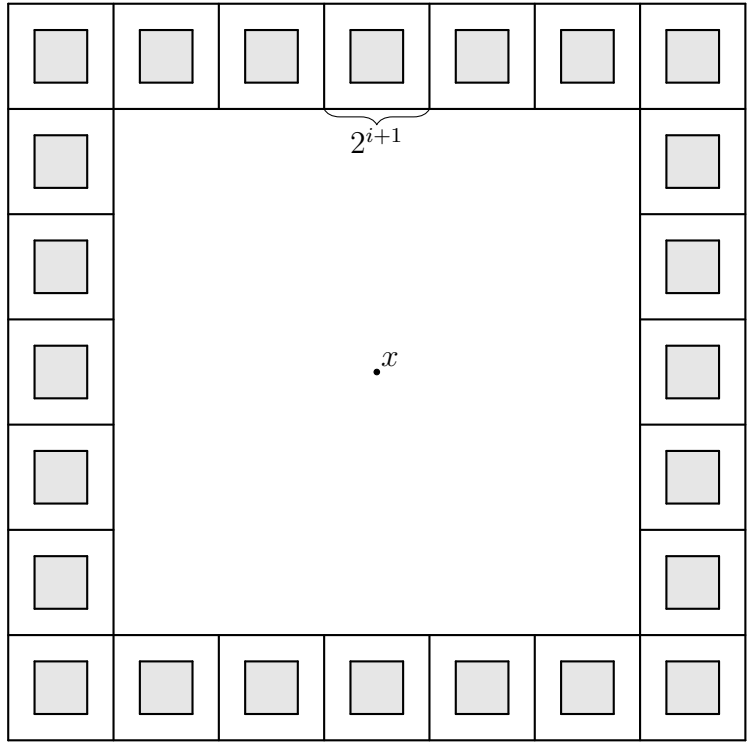

\begin{observation}\label{ob basin}
For every cube $q\in \mathcal F_i$, $|\mathcal R_i(q)|\le \Psi 2^{(i+1)d}$.
\end{observation}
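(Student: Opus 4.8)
The plan is to combine two observations. First, a fixed cube $q \in \mathcal F_i$ can lie in $\mathcal C(x,i)$ only when it is contained in one of the $C_d$ boundary cubes of the cubic net $N\big(x, K\cdot 2^{i+1}\big)$; this is immediate from the defining property of $\mathcal C(\cdot,i)$, whose $C_d$ members are by construction cubes of $\mathcal F_i$ each sitting inside the corresponding boundary cube. Second --- and this is the point that makes the estimate work --- these $C_d$ boundary cubes are ``rigid'' in $x$: as $x$ ranges over $\mathbb T^d_n$, the boundary cubes of $N(x,K\cdot 2^{i+1})$ are precisely the translates $x + Q$, where $Q$ ranges over a \emph{fixed} family $\mathcal Q$ of $C_d$ axis-parallel cubes of side length $2^{i+1}$ (recorded above), with $\mathcal Q$ independent of $x$. (Indeed, each coordinate interval of a boundary cube is one of the congruent subintervals into which $[x_j-\ell,x_j+\ell]$ is partitioned, hence a translate by $x_j$ of an interval depending only on $\ell$ and the chosen index.)

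Granting this, I would conclude in two short steps. The first observation yields the containment
$$\mathcal R_i(q) \ \subseteq\ \bigcup_{Q \in \mathcal Q} \big\{\, x \in \mathbb T^d_n \ :\ q \subseteq x + Q \,\big\}.$$
For the second step I bound each term: fixing $Q \in \mathcal Q$ and writing $q = \vec p + [0,2^i]^d$ and $Q = \vec c + [0, 2^{i+1}]^d$, the inclusion $q \subseteq x + Q$ is equivalent to the coordinatewise inequalities $\vec p - \vec c - 2^i \vec 1 \le x \le \vec p - \vec c$, so the admissible $x$ form an axis-parallel cube of side length $2^i$, hence of volume $2^{id} \le 2^{(i+1)d}$. (On the torus nothing changes: in the relevant range of $i$ one has $K \cdot 2^{i+1} \le \ell_{max}$, so these windows and cubes do not wrap around.) Summing over the $C_d$ cubes $Q \in \mathcal Q$ gives $|\mathcal R_i(q)| \le C_d\, 2^{(i+1)d}$, as claimed.

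I do not foresee a genuine obstacle: what needs mild care is only the bookkeeping --- checking that it suffices to range over the $C_d$ boundary positions rather than all $K^d$ first-level subcubes, and that containment of $q$ in some boundary cube of $N(x,K\cdot 2^{i+1})$ is genuinely forced by $q \in \mathcal C(x,i)$, both of which follow straight from how $\mathcal C(x,i)$ was defined. (The same argument in fact delivers the sharper bound $C_d\, 2^{id}$; the stated estimate is all that is used later.)
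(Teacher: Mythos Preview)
Your proof is correct. Both arguments rest on the same observation --- that $q\in\mathcal C(x,i)$ forces $q$ to sit inside one of the $C_d$ boundary cubes of $N(x,K\cdot 2^{i+1})$ --- but the computations diverge from there. The paper takes the center $y$ of $q$, uses the $L^\infty$-symmetry $y\in W(x,r)\setminus W(x,r')\iff x\in W(y,r)\setminus W(y,r')$, and bounds $|\mathcal R_i(q)|$ by the volume of a single annulus around $y$. You instead parametrize the boundary cubes as $x+Q$ for a fixed family $\mathcal Q$ of $C_d$ reference cubes, reduce $q\subseteq x+Q$ to coordinatewise inequalities, and sum the resulting $C_d$ cubes of admissible $x$. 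Your route is slightly more explicit and, as you note, actually yields the sharper bound $C_d\,2^{id}$; the paper's symmetry argument is a one-liner but gives only the stated $C_d\,2^{(i+1)d}$. Either suffices for the downstream applications.
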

\begin{proof}
Let $y$ be the center of the cube $q$. Then, every point $x\in \mathbb T^d_n$, for which $q$ is contained in some boundary cube of $N(x, K\cdot 2^{i+1})$, is itself contained in $W(y, K\cdot 2^{i+1})\setminus W(y, (K-2)\cdot 2^{i+1})$, whose volume is exactly $\Psi 2^{(i+1)d}$.
\end{proof}

For every $m\in [k-1]_0$ and every vertex $v\in [2^{md}, 2^{(m+1)d})$, define
\begin{equation}\label{eq def l(v)}
    \ell(v) = \min\{K\cdot 2^{s+1}:\hspace{0.2em} \forall q\in \mathcal C(v,s), q\cap [2^{md}-1]_0 \neq \emptyset \text{ and  }s\ge m\}.
\end{equation}
Intuitively, we are looking for the smallest value of $\ell=\ell(v)$ (depending on the label of $v$) that ensures that all boundary cubes in $N(v,\ell)$ contain at least one vertex. 
In particular, the number of children of $v$ outside $W(v, 1/2)$ is at most $\Psi \alpha_{\ell(v)}$. The next observation studies the proportion of cubes of $\mathcal F_i$ that do not contain a vertex among $[2^{(k-j)d}-1]_0$ for $j\le i$ (recall that $n=2^{kd}$). Define
\begin{equation*}
    \mu = \mu(i,j) := 2^{(k-i)d} \left(1 - 2^{-(k-i)d}\right)^{2^{(k-j)d}}.
\end{equation*}
Note that when $k-i\gg 1$, we have that $\mu =  (1+o(1)) 2^{(k-i)d}\exp(-2^{(i-j)d})$. The meaning of $\mu$ will become clear in the next observation. 
\begin{observation}\label{ob 2.5}
Fix $i = i(k)$ such that $k-i\gg 1$ and $j\in [i]_0$.
The expected number of cubes in $\mathcal F_i$ not containing a vertex among $[2^{(k-j)d}-1]_0$ is $\mu$.
Moreover, the number of such cubes is at most:
\begin{itemize}
    \item $k^3 \mu$ with probability at least $1 - k^{-3}$,
    \item $4\mu$ with probability at least $1 - 4 (3/4)^{4\mu}$.
\end{itemize}
\end{observation}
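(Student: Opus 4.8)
The plan is to reduce everything to an elementary balls-in-bins computation. By Definition~\ref{eq def 1} the positions of the vertices with labels in $[2^{(k-j)d}-1]_0$ are $m_j:=2^{(k-j)d}$ independent uniform points in $\mathbb T^d_n$, while $\mathcal F_i$ consists of $N_i:=2^{(k-i)d}$ congruent cubes, each of relative volume $p:=2^{-(k-i)d}=1/N_i$. Writing $X$ for the number of cubes of $\mathcal F_i$ missed by all $m_j$ points, each fixed cube is missed with probability $(1-p)^{m_j}$, so linearity of expectation gives at once $\mathbb E X = N_i(1-p)^{m_j}=\mu$, which is the first claim. For the asymptotic form I would use the two-sided estimate $e^{-x-x^2}\le 1-x\le e^{-x}$, valid once $x\le 1/2$; the hypothesis $i\le k-20\log_2 k$ forces $p\le k^{-20d}\to 0$, so this applies, and raising to the power $m_j$ and recalling $m_j p = 2^{(i-j)d}$ gives $\mu = (1+o(1))N_i e^{-m_j p} = (1+o(1))2^{(k-i)d}\exp(-2^{(i-j)d})$, the error factor being $e^{-m_j p^2}=1-o(1)$.

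For the first tail bound, Markov's inequality alone suffices: $\mathbb P(X\ge k^3\mu)\le \mathbb E X/(k^3\mu) = k^{-3}$. For the second bound the key point is that, although the events ``a given cube of $\mathcal F_i$ is empty'' are not independent, a first-moment bound over $s$-subsets already yields Chernoff-strength concentration, so no concentration machinery is needed. Concretely, set $s=\lceil 4\mu\rceil$ (we may assume $s\le N_i$, else $X<4\mu$ deterministically). If $X\ge 4\mu$ then some family of $s$ cubes of $\mathcal F_i$ is entirely empty; a fixed such family is empty exactly when all $m_j$ points avoid a set of relative volume $s/N_i$, and by Bernoulli's inequality $1-s/N_i\le(1-1/N_i)^s$ this has probability $(1-s/N_i)^{m_j}\le\big((1-p)^{m_j}\big)^s=(\mu/N_i)^s$. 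A union bound over the $\binom{N_i}{s}\le (eN_i/s)^s$ such families then gives
\[
\mathbb P(X\ge 4\mu)\le \left(\frac{eN_i(1-p)^{m_j}}{s}\right)^{s}=\left(\frac{e\mu}{s}\right)^{s}\le\left(\frac{e}{4}\right)^{s}\le\left(\frac{e}{4}\right)^{4\mu}\le\left(\frac34\right)^{4\mu}\le 4\left(\frac34\right)^{4\mu},
\]
using $s\ge 4\mu$ and $e/4<3/4<1$ in the last steps.

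I do not expect a genuine obstacle here; the statement is routine. The only places needing a little care are (i) the expansion of $\mu$, where the hypothesis $i\le k-20\log_2 k$ is exactly what makes $p=2^{-(k-i)d}\to 0$ so that $1-p=e^{-p(1+o(1))}$, and (ii) chasing the numerical constant in the second bound, which is cleanly handled by the first-moment/Stirling estimate above rather than by invoking, say, negative association of the occupancy indicators together with the standard Chernoff bound (which would give the same conclusion with room to spare, but less self-contained).
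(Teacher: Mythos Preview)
Your proof is correct and follows the paper's approach: linearity of expectation for $\mathbb E X=\mu$, Markov's inequality for the $k^3\mu$ bound, and a union bound over $s$-subsets of cubes for the $4\mu$ bound. Your handling of the last part is in fact slightly tighter---the paper bounds $\mathbb P(X\ge s)\le(3/4)^s$ for each $s\ge\lceil 4\mu\rceil$ and then sums the geometric series (whence the factor $4$ in $4(3/4)^{4\mu}$), whereas you obtain $(e/4)^{4\mu}<(3/4)^{4\mu}$ directly from the single choice $s=\lceil 4\mu\rceil$.
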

\begin{proof}
Since the number of cubes in $\mathcal F_i$ is $2^{(k-i)d}$, the statement about the expected number of cubes is clear. The first bullet then follows from Markov's inequality. 

We concentrate on the second bullet. For every large enough $k$, the probability that $s\ge 4\mu$ cubes do not contain a vertex is bounded from above by 
\begin{align*}
\binom{2^{(k-i)d}}{s} \left(1-\dfrac{s}{2^{(k-i)d}}\right)^{2^{(k-j)d}}
\le \left(\dfrac{2^{(k-i)d}e}{s} \exp(-2^{(i-j)d})\right)^{s}\le \left(\dfrac{3}{4}\right)^{s}.
\end{align*}

A union bound over all $s\in [\lceil 4\mu\rceil, 2^{(k-i)d}]$ proves the claim since 
\begin{equation*}
    \sum_{s\ge \lceil 4\mu\rceil} \left(\dfrac{3}{4}\right)^s = 4\left(\dfrac{3}{4}\right)^{\lceil 4\mu\rceil}\le 4\left(\dfrac{3}{4}\right)^{4\mu},
\end{equation*}
and the lemma is proved.
\end{proof}

For every $j\le i$, define $\mathcal A_{i,j}$ to be the event that at most $M_{i,j} := 4\mu \mathds{1}_{\mu\ge k^3}+k^3 \mu \mathds{1}_{\mu<k^3}$ cubes of $\mathcal F_i$ do not contain a vertex among $[2^{(k-j)d}-1]_0$. Also, fix an integer $t = t(k)$ satisfying $k-t\gg 1$ and set 
$$\mathcal A = \bigcap_{i\le t}\hspace{0.5em}\bigcap_{j\le i} \mathcal A_{i,j}.$$

\begin{corollary}\label{cor technical}
For every $j\le t - 1$ and $i\in [j,t]$, conditionally on $\mathcal A_{i, j}$, the probability that a vertex $v\in [2^{(k-j)d}, 2^{(k-j+1)d})$ has $\ell(v)\le K\cdot 2^{i+1}$ is at least 
$$1 - \Psi\left( 5 \exp(-2^{(i-j)d})\mathds{1}_{\mu\ge k^3} + \dfrac{k^6\mathds{1}_{1/k^3 \le \mu < k^3}}{2^{(k-i)d}}\right).$$
\end{corollary}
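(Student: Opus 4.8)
The plan is to show that, conditionally on $\mathcal A_{i,j}$, already the scale $s=i$ succeeds in the minimisation~\eqref{eq def l(v)} defining $\ell(v)$, with the stated probability. Write $m=k-j$, so $v$ has label in $[2^{md},2^{(m+1)d})$ and the vertex set relevant to $\ell(v)$ is $[2^{md}-1]_0=[2^{(k-j)d}-1]_0$. Let $\mathcal E\subseteq\mathcal F_i$ be the (random) family of cubes of $\mathcal F_i$ that contain no vertex of $[2^{(k-j)d}-1]_0$; both $\mathcal E$ and the event $\mathcal A_{i,j}=\{|\mathcal E|\le M_{i,j}\}$ are measurable with respect to the positions of $[2^{(k-j)d}-1]_0$. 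Since $v$ has label at least $2^{(k-j)d}$, its position $p_v$ is uniform on $\mathbb T^d_n$ and independent of that configuration, hence of $\mathcal E$ and of $\mathcal A_{i,j}$. Now, if $\mathcal C(v,i)\cap\mathcal E=\emptyset$ --- i.e.\ every cube of $\mathcal C(v,i)$ contains a vertex of $[2^{(k-j)d}-1]_0$ --- then $K\cdot 2^{i+1}$ is one of the values over which the minimum in~\eqref{eq def l(v)} is taken (here $s=i$ is an admissible scale, which is exactly what the hypothesis $i\in[j,k-20\log_2 k]$ guarantees), so $\ell(v)\le K\cdot 2^{i+1}$. Hence
\[
\mathbb P\big(\ell(v)>K\cdot 2^{i+1}\ \big|\ \mathcal A_{i,j}\big)\le \mathbb P\big(\mathcal C(v,i)\cap\mathcal E\neq\emptyset\ \big|\ \mathcal A_{i,j}\big).
\]

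For the right-hand side I would condition further on the configuration of $[2^{(k-j)d}-1]_0$: this determines $\mathcal E$ and decides whether $\mathcal A_{i,j}$ holds, while $\mathcal C(v,i)$ is still determined by a uniform $p_v$. A union bound over $\mathcal E$ together with Observation~\ref{ob basin} gives
\[
\mathbb P\big(\mathcal C(v,i)\cap\mathcal E\neq\emptyset\ \big|\ \mathcal E\big)\le \sum_{q\in\mathcal E}\mathbb P\big(p_v\in\mathcal R_i(q)\big)=\sum_{q\in\mathcal E}\frac{|\mathcal R_i(q)|}{n}\le \frac{C_d\,2^{(i+1)d}}{n}\,|\mathcal E|,
\]
and since $|\mathcal E|\le M_{i,j}$ on $\mathcal A_{i,j}$, taking expectations over $\mathcal E$ conditioned on $\mathcal A_{i,j}$ yields
\[
\mathbb P\big(\ell(v)>K\cdot 2^{i+1}\ \big|\ \mathcal A_{i,j}\big)\le \frac{C_d\,2^{(i+1)d}}{n}\,M_{i,j}=\frac{C_d\,2^{d}}{2^{(k-i)d}}\,M_{i,j},
\]
using $n=2^{kd}$.

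It then remains to insert the definition of $M_{i,j}$ together with the bound $\mu\le 2^{(k-i)d}\exp(-2^{(i-j)d})$, which follows from $(1-x)^t\le e^{-xt}$ applied to the formula for $\mu$ stated just before Observation~\ref{ob 2.5} (valid since $i\ge j$). In the regime $\mu\ge k^3$ we have $M_{i,j}=4\mu$, so the bound is $O_d\big(\exp(-2^{(i-j)d})\big)$, i.e.\ the first term of the claimed expression for a suitable constant. In the regime $1/k^3\le\mu<k^3$ we have $M_{i,j}=k^3\mu<k^6$, so the bound is $O_d\big(k^6/2^{(k-i)d}\big)$, the second term. Finally, if $\mu<1/k^3$ then $M_{i,j}=k^3\mu<1$, so on $\mathcal A_{i,j}$ necessarily $\mathcal E=\emptyset$ and $\ell(v)\le K\cdot 2^{i+1}$ holds surely, matching the fact that in this case both indicators in the statement vanish and the asserted lower bound is $1$. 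Adding up the three regimes gives the claim. The proof is short given the groundwork already laid (Observations~\ref{ob basin} and~\ref{ob 2.5}); the only points to watch are keeping the conditioning honest --- it is the independence of $p_v$ from the first $2^{(k-j)d}$ vertices that lets ``$q\in\mathcal C(v,i)$'' keep probability $|\mathcal R_i(q)|/n$ after conditioning on $\mathcal A_{i,j}$ --- and the (routine) check that $s=i$ may be used in the minimum, which is exactly the range hypothesis on $i$.
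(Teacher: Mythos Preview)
Your argument is correct and mirrors the paper's own proof: reduce $\{\ell(v)>K\cdot 2^{i+1}\}$ to $\{\mathcal C(v,i)\cap\mathcal E\neq\emptyset\}$, exploit that $p_v$ is uniform and independent of the positions of $[2^{(k-j)d}-1]_0$, and bound the probability via $|\mathcal E|\le M_{i,j}$ on $\mathcal A_{i,j}$ together with a union bound. The only cosmetic difference is that using the upper bound $|\mathcal R_i(q)|\le C_d\,2^{(i+1)d}$ from Observation~\ref{ob basin} rather than the exact average $|\mathcal R_i(q)|=C_d\,2^{id}$ (which follows from translation invariance on the torus, since each cube of $\mathcal C(v,i)$ is then uniform in $\mathcal F_i$) costs an extra factor $2^d$ in the constant; this is immaterial for all subsequent applications.
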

\begin{proof}
For every $i\in [j,t]$ and a vertex $v\in [2^{(k-j)d}, 2^{(k-j+1)d})$, if $\mathcal C(v, i)$ contains $\Psi$ squares of $\mathcal F_i$ where each square contains at least one vertex among $[2^{(k-j)d}-1]_0$, then $\ell(v)\le K\cdot 2^{i+1}$. 
Conditionally on $\mathcal A_{i,j}$, by Observation~\ref{ob basin} and Observation~\ref{ob 2.5} we have that any given cube of $\mathcal F_i$ does not contain a vertex among $[2^{(k-j)d}-1]_0$ with probability at most 
\begin{equation*}
\begin{cases}
    & \dfrac{4\mu}{2^{(k-i)d}} \le 5 \exp(-2^{(i-j)d}), \text{ if } \mu\ge k^3 \text{ and } n \text{ is sufficiently large},\\
    & \dfrac{k^3 \mu}{2^{(k-i)d}}, \text{ if } \mu\in [1/k^3, k^3],\\
    & 0, \text{ otherwise.}
\end{cases}
\end{equation*}

\noindent
Thus, conditionally on $\mathcal A_{i,j}$, for every vertex $v\in [2^{(k-j)d}, 2^{(k-j+1)d})$, $\ell(v) > K\cdot 2^{i+1}$ with probability at most 
\begin{equation*}
    \dfrac{\Psi( 4\mu\mathds{1}_{\mu\ge k^3} + k^3 \mu \mathds{1}_{1/k^3 \le \mu < k^3})}{2^{(k-i)d}} \le \Psi\left(5\exp(-2^{(i-j)d})\mathds{1}_{\mu\ge k^3} + \dfrac{k^6\mathds{1}_{1/k^3 \le \mu < k^3}}{2^{(k-i)d}}\right),
\end{equation*}
and the proof of the corollary is finished.
\end{proof}

To conclude the proof of the upper bound of Theorem~\ref{thm main 1}, it only remains to show that a.a.s.\ the number of vertices in $[2^{(k-j)d}, 2^{(k-j+1)d})$ satisfying $\ell(v) = K\cdot 2^{i+1}$, where $i\in [j, t]$, is concentrated around its expected value. 

We start with a claim that follows directly from Chernoff's inequality~\eqref{chern}.

\begin{claim}\label{claim 2.9}
Fix a sequence $(p_i)_{i\ge 0}$ of non-negative real numbers of summing up to $1$. 
Given a random variable $X$ taking values in $\mathbb{Z}^+$ a.s. such that  $\mathbb P(X=i) = p_i$ for all integer $i \ge 0$, 
and $m := 2^{(k-j+1)d} - 2^{(k-j)d}$ independent copies $X_1, \dots, X_m$ of $X$, let $Y_r$ be the number of $(X_i)_{i\in [m]}$ that are at least $r$. Then, for every $r\ge 1$ and $\delta \ge 0$,
\begin{equation*}
    \mathbb P(|Y_r - \mathbb E Y_r|\ge \delta \mathbb E Y_r)\le 2\exp\left(-\frac{\delta^2 \mathbb E Y_r}{2(1+\delta)}\right) = 2\exp\left(-\frac{\delta^2 m \sum_{i\ge r} p_i}{2(1+\delta)}\right).
\end{equation*}
\end{claim}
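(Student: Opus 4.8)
The plan is to reduce the claim immediately to the Chernoff bound~\eqref{chern} recorded in the preliminaries; there is essentially nothing else to do. The key (and only) observation is that $Y_j$ is a Binomial random variable. Writing $Y_j=\sum_{i=1}^m \mathbf{1}_{\{X_i\ge j\}}$, the summands are independent Bernoulli variables, each equal to $1$ with probability $q_j:=\mathbb P(X\ge j)=\sum_{i\ge j}p_i$. Hence $Y_j\sim \mathrm{Bin}(m,q_j)$ and $\mathbb E Y_j=mq_j=m\sum_{i\ge j}p_i$. (The hypothesis that $(p_i)$ is exponentially decreasing plays no role in the inequality itself; it is only needed to ensure the tail sums are finite and, more substantively, in the later applications of the claim.)

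Next I would apply~\eqref{chern} to the Binomial variable $Y_j$ with $t=\delta\mathbb E Y_j$, separately for the upper and the lower tail. For the upper tail,
\[
\mathbb P\bigl(Y_j\ge \mathbb E Y_j+\delta\mathbb E Y_j\bigr)\le \exp\!\left(-\frac{(\delta\mathbb E Y_j)^2}{2(\mathbb E Y_j+\delta\mathbb E Y_j)}\right)=\exp\!\left(-\frac{\delta^2\mathbb E Y_j}{2(1+\delta)}\right),
\]
using $\mathbb E Y_j+\delta\mathbb E Y_j=(1+\delta)\mathbb E Y_j$ in the denominator of the exponent. The lower tail $\mathbb P(Y_j\le \mathbb E Y_j-\delta\mathbb E Y_j)$ is bounded by the same quantity by the other half of~\eqref{chern} (the standard one-sided bound $\exp(-\delta^2\mathbb E Y_j/2)$ is in fact stronger, but~\eqref{chern} already suffices). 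A union bound over the two events produces the factor $2$ and the first displayed inequality, and substituting $\mathbb E Y_j=m\sum_{i\ge j}p_i$ gives the second form.

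I do not expect any genuine obstacle here: the proof is a one-line reduction plus bookkeeping. The only points to be careful about are recognizing $Y_j$ as $\mathrm{Bin}(m,q_j)$ and tracking the algebra in the Chernoff exponent; optionally one notes the degenerate case $q_j=0$, where $Y_j=0$ almost surely and the bound holds trivially.
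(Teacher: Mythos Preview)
Your proposal is correct and matches the paper's approach: the paper does not give a separate proof but simply states that the claim ``follows directly from Chernoff's inequality~\eqref{chern}'', which is precisely the reduction you spell out by recognizing $Y_j\sim\mathrm{Bin}(m,q_j)$ with $q_j=\sum_{i\ge j}p_i$.
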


\begin{proof}[Proof of the upper bound of Theorem~\ref{thm main 1}]
We are now ready for the proof of the upper bound of Theorem~\ref{thm main 1}. 
First, fix $t = k - 20\log_2 k$ and $j\le i\le t$ for which $\mu\ge k^3$. 
Then, every vertex $v\in [2^{(k-j)d}, 2^{(k-j+1)d})$ has the same probability to satisfy $\ell(v) > K\cdot 2^{i+1}$. 
Moreover, conditionally on $\mathcal A_{i,j}$, Observation~\ref{ob basin} implies that this probability is bounded from above by $4\mu\cdot \Psi 2^{(i+1)d}\cdot 2^{-kd}$. 
Then, by Claim~\ref{claim 2.9}, applied conditionally on $\mathcal A_{i,j}$ for $(\mathds{1}_{\ell(v) > K\cdot 2^{i+1}})_{v\in [2^{(k-j)d}, 2^{(k-j+1)d})}$ and $\delta=1.5$, 
the number of vertices $v\in [2^{(k-j)d}, 2^{(k-j+1)d})$ that satisfy $\ell(v) > K\cdot 2^{i+1}$ is at most 
\begin{equation*}
1.5 \cdot (2^{(k-j+1)d} - 2^{(k-j)d})\cdot 4\mu\cdot \Psi 2^{(i+1)d}\cdot 2^{-kd} < 2^{2d+3} \Psi\cdot 2^{(k-j)d} \exp(-2^{(i-j)d})    
\end{equation*}
with probability at least $1-\exp(-\Omega_d(\mu 2^{(i-j)d})) = 1 - o(1/k^3)$, where the latter inequality follows from our assumptions that $i\ge j$ and $\mu \ge k^3$. Note that we are allowed to apply Claim~\ref{claim 2.9} since conditionally on the positions of $[2^{(k-j)d}-1]_0$ the events $\{\ell(v) > K\cdot 2^{i+1}\}_{v\in [2^{(k-j)d}, 2^{(k-j+1)d})}$ are independent.

Now, suppose that $j \le i \le t$ and $\mu < k^3$. Then, conditionally on $\mathcal A_{i,j}$, at most $k^7$ vertices $v\in [2^{(k-j)d}, 2^{(k-j+1)d})$ have $\ell(v) > K\cdot 2^{i+1}$ with probability at least $1 - O(1/k^4) = 1 - o(1/k^3)$ (this last bound follows by Markov's inequality). 

Denote by $\mathcal D_{i,j}$ the event that the number of vertices $v\in [2^{(k-j)d}, 2^{(k-j+1)d})$ with $\ell(v) > K\cdot 2^{i+1}$ is at most
\begin{align*}
    2^{2d+3} \Psi\cdot 2^{(k-j)d} \exp(-2^{(i-j)d}) + k^7,
\end{align*}
and let 
$$\mathcal D = \bigcap_{i\le t}\hspace{0.5em}\bigcap_{j\le i} \mathcal D_{i,j}.$$ 
Then, we get that for every pair $(i,j)$, $\mathbb P(\overline{\mathcal D_{i,j}}\mid \mathcal A_{i,j}) = o(1/k^3)$. Thus, 
\begin{align*}\label{eq:UB}
    \mathbb P(\overline{\mathcal D})\le\hspace{0.3em} 
    &\sum_{i\le t}\hspace{0.5em}\sum_{j\le i}  \mathbb P(\overline{\mathcal D_{i,j}})
    \le \sum_{i\le t}\hspace{0.5em}\sum_{j\le i} \big(\mathbb P(\overline{\mathcal D_{i,j}}\cap \mathcal A_{i,j}) + \mathbb P(\overline{\mathcal A_{i,j}})\big)\\
    \le\hspace{0.3em}
    &\sum_{i\le t}\hspace{0.5em}\sum_{j\le i} \left(\mathbb P(\overline{\mathcal D_{i,j}}\mid \mathcal A_{i,j}) + o(1/k^3)\right) = o(1/k) = o(1).
\end{align*}

Thus, $\mathcal D$ is an a.a.s.\ event. 
Finally, note that, for every $i\le t$, under the event $\cap_{j\le i} \mathcal D_{i,j}$, the number of vertices with $\ell(v) > K\cdot 2^{i+1}$ is at most 
\begin{align*}
    2^{(k-i)d} + \sum_{j\le i} 2^{2d+3} \Psi\cdot  2^{(k-j)d} \exp(-2^{(i-j)d}) + k^7 =\hspace{0.3em} O(n 2^{-id} + (\log n)^{10}).
\end{align*}
Recalling that for $\ell(v)=K \cdot 2^{i+1}$, the number of children in outer cubes is at most $\Psi \alpha(K \cdot2^{i+1},d)$, we deduce that there exists $C_1=C_1(d) > 0$ sufficiently small so that, 
for all $i\le t = \tfrac{1}{d}\log_2 n - 20\log_2(\tfrac{1}{d}\log_2 n)$, the number of vertices $v\in V(G_n)$ with strictly more than $\Psi \alpha(K\cdot 2^{i+1}, d) +1= \Theta(i)$ neighbors outside $W(v,1/2)$ is at most $n \exp(- 2C_1 i)$. 
Also, recall that for all $i \le s_{max}$, by Observation~\ref{ob close neighbors}, the number of vertices $v\in V(G_n)$ with at least $2^d i$ vertices of $G_n$ contained in $W(v,1/2)$ is
\begin{equation*}
O\left(\frac{n}{(i-1)!}\right)=O\left(n\exp(-(1+o_{i\to \infty}(1))i\log i)\right).    
\end{equation*}
Moreover, for all $i > s_{\max}$, the number of vertices $v\in V(G_n)$ with at least $2^d i$ vertices of $G_n$ contained in $W(v,1/2)$ must also be $O(n/(s_{max}-1)!) = O(n^{2/3+o(1)})$. 
Thus, by choosing $c=c(d) > 0$ sufficiently small, 
we deduce that, for all $i \le c \log n$, the number of vertices $v\in V(G_n)$ with at least $2^d i$ vertices of $G_n$ contained in $W(v,1/2)$ is at most $n \exp(-2C_1 i)$. 
Finally, summing over neighbors inside $W(v,1/2)$ and those outside $W(v,1/2)$ proves the upper bound of Theorem~\ref{thm main 1}.
\end{proof}

\subsection{\texorpdfstring{Lower bound on the number of vertices of degree at least $i$ and concentration around its expectation}{}}

As above, we assume for convenience that $n = 2^{kd}$. Recall that, by abuse of notation, in this section we identify the vertices in $V(G) = [n-1]_0$ with their geometric positions. 

For integers $i,j\in [k]_0$, $j\le i$, define $\mathcal B_{i,j}$ as the event that, after embedding the vertex $2^{(k-j)d}-1$, at least $2^{(k-i)d}/3^{d+3}$ of the Voronoi cells of vertices $v$ in $[2^{(k-i)d}-1]_0$ contain the ball $B(v, 2^j/6)$ in their interiors. Recall that $t = t(k)$ is an integer satisfying $k-t\gg 1$.

\begin{lemma}\label{lem B_i,j holds}
For every $i\le t$ and $j\in [i]_0$, the event $\mathcal B_{i,j}$ holds with probability $1 - \exp(-\Omega(2^{(k-i)d}))$.
\end{lemma}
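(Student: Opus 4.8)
My plan is to reduce the statement to a concrete occupancy estimate and then establish the required concentration through a carefully staged reveal of the point configuration.

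\emph{Geometric reduction.} First I would record the elementary fact that, for a vertex $v$ in a finite point set, $B_d(v,2^j/6)\subseteq\mathrm{int}(\mathrm{Vor}(v))$ holds if and only if no other point of the set lies in $B_d(v,2^j/3)$: if some $u\neq v$ has $d_{\mathbb T^d_n}(u,v)\le 2^j/3$ then the midpoint of the geodesic from $v$ to $u$ lies in $B_d(v,2^j/6)$ and on $\partial\,\mathrm{Vor}(v)$, while if all other points are at distance $>2^j/3$ from $v$ then every $x\in B_d(v,2^j/6)$ is strictly closer to $v$ than to any other point. Writing $m:=2^{(k-i)d}$ and $N:=2^{(k-j)d}$, the task becomes: bound from below the number of vertices among $[m-1]_0$ whose ball of radius $2^j/3$ contains no other vertex of $[N-1]_0$. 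Note that $m\le N$, that $m\to\infty$ since $i\le k-\log k$, and that $2^j/3$ is far below the injectivity radius $2^{k-1}$ of $\mathbb T^d_n$, so these balls do not wrap around. I would then tessellate $\mathbb T^d_n$ into $N$ congruent cubes of side $2^j$, and for each cube $C$ let $C'$ be its concentric subcube of side $2^j/3$, so that a point lying in $C'$ has its ball of radius $2^j/3$ contained in $C$. Call $C$ \emph{good} if it contains exactly one vertex of $[N-1]_0$, that vertex has label $<m$, and it lies in $C'$. Distinct good cubes yield distinct vertices witnessing the event $\mathcal B_{i,j}$, so it suffices to produce at least $3^{-d-3}m$ good cubes off an event of probability $\exp(-\Omega_d(2^{(k-i)d}))$.

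\emph{Staged reveal.} Since the cubes are congruent, the configuration may be generated by assigning each vertex to a cube uniformly and independently and then placing it uniformly inside its cube. I would first reveal the cube-assignments of the $m$ vertices of $[m-1]_0$, and set $A_1$ to be the number of them alone in their cube among $[m-1]_0$. Then $\mathbb E A_1=m(1-1/N)^{m-1}\ge m(1-1/N)^{N-1}\ge m/e$, and changing one assignment moves $A_1$ by at most $3$, so McDiarmid's inequality (Lemma~\ref{bentkus}) gives $A_1\ge(1/e-\varepsilon)m$ off an event of probability $\exp(-\Omega_d(m))$, for a fixed small $\varepsilon>0$. Next I would reveal the assignments of the remaining $N-m$ vertices and let $A$ count, among those $A_1$ cubes, the ones receiving no further vertex; these are distinct cubes, their occupancy counts are negatively associated, and $\mathbb E[A\mid\mathrm{stage}\ 1]=A_1(1-1/N)^{N-m}\ge A_1/(2e)$, so a Chernoff bound for sums of negatively associated indicators yields $A\ge(1-\delta)A_1/(2e)$ off a conditional event of probability $\exp(-\Omega(A_1))=\exp(-\Omega_d(m))$, for a fixed small $\delta>0$. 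Finally I would reveal the within-cube positions: conditionally on the first two stages, the number of good cubes is $\mathrm{Bin}(A,3^{-d})$-distributed, so one further Chernoff bound produces at least $(1-\delta')3^{-d}A$ good cubes off an event of probability $\exp(-\Omega_d(A))=\exp(-\Omega_d(m))$.

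\emph{Conclusion and main difficulty.} Intersecting the three events and chaining the bounds produces at least $(1-\delta')(1-\delta)(1/e-\varepsilon)(2e)^{-1}3^{-d}m$ good cubes; since $(2e^2)^{-1}>3^{-3}$, one may fix $\varepsilon,\delta,\delta'$ small enough that this is at least $3^{-d-3}m=3^{-d-3}2^{(k-i)d}$, while the union of the three exceptional events has probability $\exp(-\Omega_d(2^{(k-i)d}))$; this is the assertion. The one delicate point is the concentration, and the reason for splitting the reveal into a ``special'' stage (the vertices $[m-1]_0$) and a ``background'' stage (the remaining $N-m$ vertices), rather than applying a bounded-differences inequality to $\sum_{v\in[m-1]_0}\mathds{1}[v\text{ is isolated}]$ as a function of all $N$ positions at once, is that a single vertex may be close to many vertices of $[m-1]_0$; hence coordinates do not have $O(1)$ influence and a direct argument would only give an exponent of order $m^2/N$, which is useless when $m\ll N$. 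Passing the background vertices through negative association localizes the dependence and restores the exponent $\Omega_d(m)$.
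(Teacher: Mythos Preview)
Your argument is correct. You and the paper reach the same geometric reduction---count cubes of side $2^j$ holding exactly one point of $[N-1]_0$, with that point belonging to $[m-1]_0$ and lying in the central subcube of side $2^j/3$---but the concentration steps differ. The paper Poissonises: it replaces the $N$ uniform points by the first $N_j\sim\mathrm{Po}(2N)$ points of the process, so that on the high-probability event $\{N\le N_j<3N\}$ the per-cube configurations become independent $\mathrm{Po}(2)$ processes and a single Chernoff bound handles the full three-condition indicator at once. You instead stay in the binomial model and stage the reveal: McDiarmid on the $m$ special cube-assignments, then negative association of the occupancy counts for the $N-m$ background assignments, then a binomial over within-cube positions. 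Your diagnosis of why a one-shot bounded-differences argument over all $N$ positions fails---a single background point may destroy many isolated vertices, so the exponent degrades to $m^2/N$ rather than $m$---is precisely the obstruction both approaches are built to circumvent. Poissonisation is slicker once the coupling is in place; your route is a little longer but avoids introducing an auxiliary process and makes transparent which piece of the randomness drives each concentration step.
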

\begin{proof}
We start with an easy geometric observation. 
Fix a Voronoi tessellation of $\mathbb T^d_n$, generated by the vertices $[2^{(k-j)d}-1]_0$. 
For a vertex $v\in [2^{(k-j)d}-1]_0$, if the ball $B(v,r)$ does not contain any vertex in $[2^{(k-j)d}-1]_0\setminus \{v\}$, then the Voronoi cell of the vertex $v$ in the presence of the vertices $[2^{(k-j)d}-1]_0$ contains the ball $B(v, r/2)$ in its interior. 
We now show that the number $X_i$ of vertices $v$ in $[2^{(k-i)d}-1]_0$ such that $B(v, 2^j/3)$ contains no vertex in $[2^{(k-j)d}-1]_0\setminus v$ is at least $3^{-d-3} 2^{(k-i)d}$ with probability $1 - \exp(-\Omega(2^{(k-i)d}))$.

Recall that $\mathcal F_j$ is a tessellation of $\mathbb T^d_n$ into cubes of side length $2^j$. Let $N_j\sim \mathrm{Po}(2^{(k-j)d+1})$. Then, by~\eqref{chern}, $2^{(k-j)d}\le N_j < 3\cdot 2^{(k-j)d}$ with probability $1 - \exp(-\Omega(\mathbb E N_j)) = 1 - \exp(-\Omega(2^{(k-j)d}))$. Thus, by embedding the first $N_j$ vertices of $G_n$ in $\mathbb T^d_n$, we obtain a Poisson Point Process with intensity $2^{-jd+1}$ that dominates the binomial process on $[2^{(k-j)d}-1]_0$ with probability $1 - \exp(-\Omega(2^{(k-j)d}))$. Denote by $\tilde{X}_i$ the number of squares $q\in \mathcal F_j$ satisfying the following three conditions:
\begin{itemize}
    \item $q$ contains a single vertex among $[N_j-1]_0$ (which holds with probability $2\exp(-2)$),
    \item this vertex is embedded at distance at least $2^j/3$ from the boundary of $q$ (which holds with probability $3^{-d}$ conditionally on the previous event),
    \item this vertex has label at most $N_j 2^{-(i-j)d}/3$ (which holds with probability $2^{-(i-j)d}/3$ conditionally on the first event; the second condition is independent of this event).
\end{itemize}
Note that under the event $2^{(k-j)d}\le N_j < 3\cdot 2^{(k-j)d}$ one has that $\tilde{X}_i\le X_i$: indeed, if a cube $q$ contains a unique vertex $v\le N_j 2^{-(i-j)d}/3 < 2^{(k-i)d}$ at distance at least $2^j/3$ from its boundary, the ball around $v$ with radius $2^j/3$ contains no other vertex in $[2^{(k-j)}-1]_0\subseteq [N_j-1]_0$. On the other hand, by Chernoff's inequality~\eqref{chern},
\begin{equation*}
    \mathbb P(\tilde{X}_i\le \mathbb E \tilde{X}_i/2)\le \exp(-\Omega(\mathbb E \tilde{X}_i)) = \exp(-\Omega(2^{(k-j)d}\cdot 2\exp(-2)\cdot 2^{-(i-j)d}/3^{d+1})) = \exp(-\Omega(2^{(k-i)d})).
\end{equation*}
Consequently, at least $\mathbb E\tilde{X}_i/2 = 2^{(k-j)d}\cdot \exp(-2)\cdot 2^{-(i-j)d}/3^{d+1}  > 2^{(k-i)d}/3^{d+3}$ vertices $v\in [2^{(k-i)d}-1]_0$ satisfy $W(v, 2^j/3)\cap [2^{(k-j)d}-1]_0 = \{v\}$ (so also $B_d(v, 2^j/3)\cap [2^{(k-i)d}-1]_0 = \{v\}$) with probability $1-\exp(-\Omega(2^{(k-i)d}))$, which completes the proof.
\end{proof}

\begin{lemma}\label{lem accumulating neighbors}
Fix $i\le t$ and $j\in [i]_0$. Then, there is a constant $\gamma > 0$ such that the number of vertices in $[2^{(k-i)d}-1]_0$ that have at least one neighbor in $[2^{(k-j)d}, 2^{(k-j+1)d})$ is at least $\gamma 2^{(k-i)d}/3^{d+3}$ with probability $1 - \exp(-\Omega(2^{(k-i)d}))$. 
\end{lemma}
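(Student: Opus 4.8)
The plan is to condition on the event $\mathcal B_{i,j}$ of Lemma~\ref{lem B_i,j holds} (which fails with probability only $\exp(-\Omega_d(2^{(k-i)d}))$) together with the positions of the vertices in $[2^{(k-j)d}-1]_0$, then fix a set $S$ of exactly $s^* := \lceil 3^{-d-3}2^{(k-i)d}\rceil$ ``good'' vertices $v\in [2^{(k-i)d}-1]_0$ whose Voronoi cell with respect to $[2^{(k-j)d}-1]_0$ contains $B_v := B_d(v, 2^j/6)$ in its interior, and finally argue that a positive proportion of the vertices of $S$ receive a child from the group $G_j := [2^{(k-j)d}, 2^{(k-j+1)d})$. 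Observe that the balls $(B_v)_{v\in S}$ are pairwise disjoint, being contained in interiors of distinct Voronoi cells, and that, since $j\le i$, every vertex of $G_j$ carries a larger label than every vertex of $[2^{(k-i)d}-1]_0$; hence ``$v$ receives a child from $G_j$'' and ``$v$ has a neighbor in $G_j$'' are the same event.

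The combinatorial core will be a deterministic claim involving a second concentric ball $B_v' := B_d(v, 2^j/12)$: \emph{if $u$ denotes the vertex of $G_j$ of smallest label among those embedded in $B_v$, and if moreover $u\in B_v'$, then $u$ connects to $v$ in $G_n$.} Indeed, the parent of $u$ is its nearest neighbor among the vertices of smaller label; among $[2^{(k-j)d}-1]_0$ this is $v$, since $u$ lies in the interior of the Voronoi cell of $v$, at distance $|uv|\le 2^j/12$, whereas every vertex of $G_j$ of label smaller than that of $u$ lies outside $B_v$ by the choice of $u$, hence at distance more than $2^j/6-2^j/12 = 2^j/12\ge |uv|$ from $u$ by the triangle inequality. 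Letting $Z_v$ be the indicator of the displayed event, $Z_v=1$ forces $v$ to have a neighbor in $G_j$, and distinct vertices $v$ with $Z_v=1$ yield distinct such vertices, so it suffices to show that $\sum_{v\in S}Z_v\ge c_d\, 3^{-d-3}2^{(k-i)d}$ with the required probability.

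The key manoeuvre for the concentration will be to Poissonize the group $G_j$. Introduce $N_j'\sim \mathrm{Po}(m_j/2)$, where $m_j := (2^d-1)2^{(k-j)d} = |G_j|$, independent of all positions; on the event $\{N_j'\le m_j\}$, which holds with probability $1-\exp(-\Omega_d(2^{(k-j)d})) = 1-\exp(-\Omega_d(2^{(k-i)d}))$ since $j\le i$, the positions of the first $N_j'$ vertices of $G_j$, together with their labels, form a (marked) Poisson Point Process of intensity $m_j/(2n)$ on $\mathbb T^d_n$, and any edge of $G_n$ between a vertex of $[2^{(k-i)d}-1]_0$ and one of these first $N_j'$ vertices survives in the full tree, because later arrivals do not change the parent of a vertex. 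For $v\in S$ the number $A_v$ of these PPP points in $B_v$ is Poisson with mean $\theta_d := \frac{m_j}{2n}\cdot\kappa_d(2^j/6)^d = \frac{(2^d-1)\kappa_d}{2\cdot 6^d}$ (with $\kappa_d := \Vol(B_d(1))$), a positive constant depending only on $d$, and the $(A_v)_{v\in S}$ are independent because the $B_v$ are disjoint. Conditionally on $A_v\ge 1$, the vertex of smallest label in $B_v$ is uniformly distributed in $B_v$ — the ordering of labels within $B_v$ can be realized through i.i.d.\ uniform marks attached to the PPP points, hence independently of the positions — so $\mathbb P(Z_v=1\mid A_v\ge 1) = |B_v'|/|B_v| = 2^{-d}$; and since $Z_v$ is a function of the marked PPP restricted to $B_v$, the variables $(Z_v)_{v\in S}$ are independent Bernoulli with parameter $q_d := 2^{-d}(1-e^{-\theta_d}) > 0$. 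Chernoff's bound~\eqref{chern} then yields $\mathbb P(\sum_{v\in S}Z_v\le q_d s^*/2)\le \exp(-\Omega_d(2^{(k-i)d}))$, and a union bound over the few bad events finishes the argument with $c_d := q_d/2$.

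I expect the concentration step to be the only genuine obstacle: the group $G_j$ typically contains many more than $2^{(k-i)d}$ vertices, so a direct bounded-differences inequality over the embeddings of $G_j$ would be hopelessly lossy, and it is precisely the Poissonization of $G_j$ — which renders the occupancies of the disjoint protected balls $(B_v)_{v\in S}$ genuinely independent — that allows Chernoff to be applied at the correct scale $2^{(k-i)d}$. The two-ball certificate for $u\to v$ is then routine, the buffer of width $2^j/12$ absorbing the interference from earlier vertices of $G_j$.
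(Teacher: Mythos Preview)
Your proposal is correct and follows essentially the same route as the paper: condition on $\mathcal B_{i,j}$, use the disjoint protected balls $B_d(v,2^j/6)$ and the inner balls $B_d(v,2^j/12)$ with the same two-ball certificate, Poissonize the group $G_j$ to obtain genuine independence across the balls, and finish with Chernoff. The paper phrases the Poissonization more telegraphically (``by a Poisson approximation (as in the proof of Lemma~\ref{lem B_i,j holds}) and Chernoff's inequality''), whereas you spell it out via $N_j'\sim\mathrm{Po}(m_j/2)$ and marked PPP independence, but the content is the same.
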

\begin{proof}
In this proof, we work conditionally on $\mathcal B_{i,j}$. 
Let $V_{i,j}$ be the set of $2^{(k-i)d}/3^{d+3}$ vertices provided by the event $\mathcal B_{i,j}$. Note that, for any two vertices $v, v' \in V_{i,j}$, $B_d(v, 2^j/6) \cap B_d(v',2^j/6)=\emptyset$ (the interiors of the Voronoi cells of $v$ and $v'$ are disjoint and contain respectively $B_d(v, 2^j/6)$ and $B_d(v',2^j/6)$ by assumption). Fix $v\in V_{i,j}$. Then, the probability that at least one vertex of $[2^{(k-j)d}, 2^{(k-j+1)d})$ falls into $B_d(v, 2^j/6)$, and the first of these is embedded into $B(v, 2^j/12)$ (this event ensures that $v$ is a parent of this new vertex), is
\begin{equation*}
    \left(1 - \left(1 - \dfrac{|B(v,1)|}{2^{(k-j)d} 6^d}\right)^{2^{(k-j+1)d}-2^{(k-j)d}}\right) \cdot \frac{1}{2^d},
\end{equation*}
which is bounded from below by a positive constant $\gamma' = \gamma'(d)$. 
As before, since $B(v, 2^j/6) \cap B(v',2^j/6)=\emptyset$, by a Poisson approximation (as in the proof of Lemma~\ref{lem B_i,j holds}) and Chernoff's inequality~\eqref{chern}, the probability that there are at least $\gamma' |V_{i,j}|/2$ vertices, for which the above event holds, is $1 - \exp(-\Omega(2^{(k-i)d}))$. Thus, the event 
\begin{equation*}
    \mathcal E = \{\text{at least a } \gamma'/2\text{-proportion of the vertices in } V_{i,j} \text{ have at least one neighbor in } [2^{(k-j)d}, 2^{(k-j+1)d})\}
\end{equation*}
holds with probability $1 - \exp(-\Omega(2^{(k-i)d}))$ conditionally on $\mathcal B_{i,j}$. Thus, by Lemma~\ref{lem B_i,j holds},
\begin{equation*}
    \mathbb P(\mathcal E) = 1 - \mathbb P(\overline{\mathcal E})\ge 1 - \mathbb P(\overline{\mathcal E}\mid \mathcal B_{i,j}) - \mathbb P(\overline{\mathcal B_{i,j}}) = 1 - \exp(-\Omega(2^{(k-i)d})),
\end{equation*}
which finishes the proof of the lemma for $\gamma = \gamma'/2$.
\end{proof}

\begin{proof}[Proof of the lower bounds of Theorem~\ref{thm main 1} and Theorem~\ref{cor:maxdegree}]
Set $t = \lfloor k - 3\log_2 k\rfloor$.
The proof of the lower bound of Theorem~\ref{cor:maxdegree} directly follows from Theorem~\ref{thm main 1} for $i\le t$, so we concentrate on the latter. 
Fix $i\le t$. By Lemma~\ref{lem accumulating neighbors} and a union bound over all $j \le i$ we conclude that, for all $j\le i$, at least $\gamma 2^{(k-i)d}/3^{d+3} $ of the vertices in $[2^{(k-i)d}-1]_0$ are parents to vertices in $[2^{(k-j)d}, 2^{(k-j-1)d})$ with probability $1 - k\exp(-\Omega(2^{(k-i)d}))$. 
For convenience, if a vertex in $[2^{(k-i)d}-1]_0$ receives more than one child in $[2^{(k-j)d}, 2^{(k-j-1)d})$, we take into account only the first one (and thus count one child only).

We show that each of the $\gamma 2^{(k-i)d-1}/3^{d+3} = (\gamma/(2\cdot 3^{d+3})) 2^{-id} n$ vertices in $[2^{(k-i)d}-1]_0$ that receive the largest number of children throughout all $i+1$ steps of embedding $([2^{(k-j)d}, 2^{(k-j+1)d}))_{j=0}^{i}$ has at least $\gamma i/(4\cdot 3^{d+3})$ children. 
We argue by contradiction, so suppose not.
Then, none of the $(1-\gamma/(2\cdot 3^{d+3})) 2^{(k-i)d} < 2^{(k-i)d}$ vertices with the smallest number of children would have more than $\gamma i/(4\cdot 3^{d+3})$ children. 
Hence, the total number of children accumulated by all vertices in $[2^{(k-i)d}-1]_0$ is at most
$$
   \frac{(i+1) \gamma 2^{(k-i)d-1}}{3^{d+3}} +\dfrac{\gamma i 2^{(k-i)d}}{4\cdot 3^{d+3}}=\frac{3}{4} \frac{\gamma (i+1) 2^{(k-i)d}}{3^{d+3}}.
$$
This is less than the total number of children accumulated over all $i+1$ steps (which is at least $\gamma (i+1) 2^{(k-i)d}/3^{d+3}$), thus yielding a contradiction. The proof is completed by a union bound over all values of $i\le t$.
\end{proof}

Equipped with the lower bound on the number of vertices with degree at least $i$, by using Theorem~\ref{thm concentration}, we obtain as a corollary that this number is well concentrated around its mean, thereby concluding the proof of Theorem~\ref{thm main 1}.
\begin{corollary}\label{cor leaves}
Fix $i\in \mathbb N$ and denote $Y_i = |\{v\in V(G_n): \deg(v)\ge i\}|$. Then, for every $\varepsilon \ge 0$,
\begin{equation*}
\mathbb P(|Y_i - \mathbb E Y_i|\ge \varepsilon \mathbb EY_i)\le \exp(-\Omega(2^{(k-i)d/3})),
\end{equation*}
where the implicit constant in the exponent depends on both $d$ and $\varepsilon$.
\end{corollary}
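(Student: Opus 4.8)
The plan is to deduce the estimate from Theorem~\ref{thm concentration}, applied to the graph parameter $g(T)=|\{v\in V(T):\deg(v)\ge i\}|$, together with the lower bound on $\mathbb E Y_i$ furnished by Theorem~\ref{thm main 1}. First I would verify the hypotheses of Theorem~\ref{thm concentration}. Since $g$ takes values in $[0,n]$ we may take $\alpha=1$. For the Lipschitz condition, note that for $T_1,T_2\in\mathcal{LT}^0_n$ a vertex $v$ satisfies $\deg_{T_1}(v)\neq\deg_{T_2}(v)$ only if $v$ is an endpoint of an edge of $E(T_1)\triangle E(T_2)$, so at most $2|E(T_1)\triangle E(T_2)|$ vertices change degree, and each changes the indicator of the event $\{\deg\ge i\}$ by at most $1$; hence $|g(T_1)-g(T_2)|\le 2|E(T_1)\triangle E(T_2)|$ and $g$ is $2$-Lipschitz. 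Thus Theorem~\ref{thm concentration} applies with $L=2$: writing $\mu:=\mathbb E Y_i$, for every $\phi(n)\ge C_d\log n$ and $t\ge 0$,
\[
\mathbb P(|Y_i-\mu|\ge t)\le 2\exp\Big(-\frac{t^2}{256\,n\,\phi^2(n)}\Big)+\exp\Big(-\frac{\phi(n)\log\phi(n)}{10}\Big).
\]

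Second, I would record that $\mu=\Theta_d(n)$. Indeed $i$ is a fixed constant, so $i\le c(d)\log n$ for all large $n$, and the lower bound of Theorem~\ref{thm main 1} — which holds with probability $1-o(1)$, hence in particular with probability at least $1/2$ — gives $\mu\ge\tfrac12\,n\exp(-C_2 i)$, a positive constant multiple of $n$; trivially $\mu\le n$. In particular $\mu^2/n=\Omega_d(n)$.

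Third, I would apply the displayed inequality with $t=\varepsilon\mu$ and $\phi(n)=n^{1/3}$, which exceeds $C_d\log n$ for large $n$. The first term becomes $2\exp\big(-\varepsilon^2\mu^2/(256\,n^{5/3})\big)$, and by $\mu^2/n=\Omega_d(n)$ this is $\exp(-\Omega_{\varepsilon,d}(n^{1/3}))$; the second term is $\exp\big(-\tfrac{1}{10}n^{1/3}\log n^{1/3}\big)\le\exp(-\Omega(n^{1/3}))$. Adding, $\mathbb P(|Y_i-\mu|\ge\varepsilon\mu)\le\exp(-\Omega_{\varepsilon,d}(n^{1/3}))$, and since $n=2^{kd}$ we have $2^{(k-i)d/3}=2^{-id/3}\,n^{1/3}$, which for the fixed $i$ is a constant multiple of $n^{1/3}$; this is exactly the claimed bound $\exp(-\Omega_{\varepsilon,d}(2^{(k-i)d/3}))$.

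I do not expect a serious obstacle here: once Theorem~\ref{thm concentration} and the $\Theta_d(n)$ lower bound on $\mathbb E Y_i$ are available the rest is bookkeeping. The only point that needs a little care is the choice $\phi(n)=n^{1/3}$, which is dictated by balancing the two error terms of Theorem~\ref{thm concentration}: taking $\phi=\Theta(\log n)$ would leave the second term only $n^{-\omega(1)}$ rather than $\exp(-n^{\Omega(1)})$, whereas $\phi=\Theta(\sqrt n)$ would render the first term trivial, and it is the cube-root scaling that produces an exponent of order $n^{1/3}=2^{kd/3}$, matching $2^{(k-i)d/3}$ up to the ($i$-dependent, but fixed) constant $2^{-id/3}$.
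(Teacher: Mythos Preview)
Your proof is correct and follows the same scheme as the paper: verify that $Y_i$ is $2$-Lipschitz, take $\alpha=1$, and plug $t=\varepsilon\,\mathbb E Y_i$ into Theorem~\ref{thm concentration}, using the lower bound on $\mathbb E Y_i$ from Theorem~\ref{thm main 1}. The only difference is the choice of $\phi$: the paper takes $\phi(n)=2^{(k-i)d/3}$, which makes the exponent $2^{(k-i)d/3}$ appear directly and keeps the implicit constant in $\Omega_{\varepsilon,d}$ free of $i$ (this is what is actually needed later, where the corollary is applied for all $i\le c\log n$ simultaneously); your choice $\phi(n)=n^{1/3}$ is cleaner but buries an $i$-dependent factor $\exp(-2C_2 i)$ in the constant, which is harmless when $i$ is genuinely fixed but would not survive if $i$ grows with $n$.
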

\begin{proof}
First, note that $Y_i$ changes by at most two after replacement of a single edge in any labeled rooted tree, so it defines a 2-Lipschitz function on $\mathcal{LT}_n$. Since, by Theorem~\ref{thm main 1}, $\mathbb E Y_i = \Omega(2^{(k-i)d})$, applying Theorem~\ref{thm concentration} with $\phi(n) = 2^{(k-i)d/3}$ and $t = \varepsilon \mathbb EY_i$ finishes the proof of the corollary.
\end{proof}

\section{Typical distance, height and diameter}\label{sec distance}
In this section, we provide several results concerning the metric structure of $G_n$ and, in particular, give a proof of Theorem~\ref{thm diam}. 
By abuse of notation, when clear from the context, we again identify a vertex with its position.

\subsection{Bounding the Euclidean distance between the endvertices of decreasing paths}

The aim of this subsection is to provide an answer to the following question: 
starting from vertex $\ell > m$, what can be said about the Euclidean distance between $\ell$ and its nearest ancestor in $[m]_0$ with respect to the graph distance of $G_n$? 
This is a key step in the analysis of the diameter and the height of the $d$-NN tree, as well as the \emph{stretch factor} of a uniformly chosen pair of vertices, 
that is, the ratio of the sum of the Euclidean lengths of the edges on the path (in $G_n$) between the two vertices and the Euclidean distance between their positions.

\begin{theorem}\label{thm 4.1}
Fix $d\ge 1$ and $m = m(n) = n^{\Omega(1)}$. Then, the event $\mathcal A$ that the Euclidean distance between every $\ell > m$ and the unique integer $m' = m'(\ell)\in [m]_0$ satisfying $d_{G_n}(\ell,m') = d_{G_n}(\ell, [m]_0)$ is at most $2\sqrt{d}\left(\frac{(\log m)^{d+1} n}{m}\right)^{1/d}$ holds with probability at least $1-3/m$.
\end{theorem}

\begin{corollary}\label{cor PoNN}
Fix $d\ge 1$ and any sequence $(x_n)_{n\ge 1}$ of real numbers in $[0,1]$ with $nx_n = n^{\Omega(1)}$. Then, with probability at least $1 - 4/(nx_n)$, the Euclidean distance between any vertex in $\PoNN$ and its closest ancestor (in terms of graph distance) with arrival time in $[0,x_n]$ is at most $3\sqrt{d}\left(\frac{(\log(nx_n))^{d+1} n}{nx_n}\right)^{1/d}$. 
\end{corollary}
\begin{proof}
Since $|\PoNN|\sim \mathrm{Po}(n)$ and $|\{v\in V(\PoNN): X_v\le x_n\}|\sim \mathrm{Po}(nx_n)$, by~\eqref{chern} one has that a.a.s.\ $|\PoNN|\in [n-n^{2/3},n+n^{2/3}]$ and $|\{v\in V(\PoNN): X_v\le x_n\}|\in [nx_n-(nx_n)^{2/3}, nx_n+(nx_n)^{2/3}]$. 
Conditionally on these two events, the corollary follows from Theorem~\ref{thm 4.1} with $m = \lfloor nx_n-(nx_n)^{2/3}\rfloor \ge 6nx_n/7$.
\end{proof}

The rough idea behind the proof of Theorem~\ref{thm 4.1} is to decompose the path from $l$ to $[m]_0$ into shorter paths whose endvertices have labels within a factor of two, and to bound their lengths from above with (sufficiently) high probability. We defer the formal proof to the end of the section.

Fix $k_0 = \lceil (\log n)^2\rceil$.
For every $k\in [k_0, n-1]$, let the event $\mathcal A_k$ be that, after the vertex with label $k-1$ has been embedded in $\mathbb T^d_n$, 
the Euclidean ball with radius $\frac{\sqrt{d}}{5} \left(\frac{3 n \log k}{k}\right)^{1/d}$ around any of the $k$ vertices contains another vertex as well (note that $k\ge k_0$ ensures that $\frac{\sqrt{d}}{5} \left(\frac{3 n \log k}{k}\right)^{1/d} < \frac{n^{1/d}}{2}$ so that the balls in $\mathbb T^d_n$ coincide with the Euclidean balls).
Roughly speaking, one may think of the event $\mathcal A_k$ as having $k$ vertices spread relatively regularly. Also, for every $k\in [k_0-1, n-2]$, define the event $\mathcal B_k = \cap_{i=k+1}^{n-1} \mathcal A_i$. Roughly speaking, the event $\mathcal B_k$ says that every vertex in $[k, n-1]$ has its parent ``close'' to it. 
We first show that, for all $k\ge k_0-1$, a.a.s.\ the event $\mathcal B_k$ holds.

\begin{lemma}\label{lem B_k}
For all $k\ge k_0$, the event $\mathcal B_k$ holds with probability at least $1-1/k$.
\end{lemma}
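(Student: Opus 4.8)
The plan is a nested union bound: an outer one over indices $i$ and an inner one over vertices. Since $\mathcal B_k=\bigcap_{i=k+1}^{n-1}\mathcal A_i$, it suffices to prove $\sum_{i=k+1}^{n-1}\mathbb P(\overline{\mathcal A_i})\le 1/k$, and this will follow once we show $\mathbb P(\overline{\mathcal A_i})\le i^{-2}$ for every $i\ge k_0$, because then $\sum_{i\ge k+1}i^{-2}\le\int_k^\infty x^{-2}\,dx=1/k$.

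Fix $i\ge k_0$ and write $r_i=\frac{\sqrt d}{5}\bigl(\tfrac{3n\log i}{i}\bigr)^{1/d}$ for the radius appearing in $\mathcal A_i$. The event $\overline{\mathcal A_i}$ is exactly that some vertex $v$ among the first $i$ vertices has $B_d(v,r_i)$ disjoint from the other $i-1$ vertices; recall that for $\mathcal A_i$ the first $i$ vertices are simply $i$ i.i.d.\ uniform points of $\mathbb T^d_n$, their labels being irrelevant to the event. I would union-bound over the $i$ possible choices of such a $v$: conditionally on the position of $v$, the remaining $i-1$ positions are i.i.d.\ uniform on $\mathbb T^d_n$, and — using the already noted fact that $r_i<n^{1/d}/2$, so $B_d(v,r_i)$ is a genuine Euclidean ball of Lebesgue measure $|B_d(1)|\,r_i^d$ — this yields
\begin{equation*}
\mathbb P(\overline{\mathcal A_i})\ \le\ i\Bigl(1-\frac{|B_d(1)|\,r_i^d}{n}\Bigr)^{i-1}\ \le\ i\exp\!\Bigl(-\frac{(i-1)\,|B_d(1)|\,r_i^d}{n}\Bigr).
\end{equation*}
The role of the precise shape of $r_i$ is that $|B_d(1)|\,r_i^d$ equals a fixed ($d$-dependent) constant times $n\log i/i$, the numerical constants in the definition of $r_i$ being chosen so that this constant is at least $3$; combined with $\tfrac{i-1}{i}\ge 1-(\log n)^{-2}$ (valid since $i\ge k_0=\lceil(\log n)^2\rceil$) this gives $(i-1)\,|B_d(1)|\,r_i^d/n\ge 3\log i$ for $n$ large. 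Hence $\mathbb P(\overline{\mathcal A_i})\le i\cdot i^{-3}=i^{-2}$, and summing over $i\in[k+1,n-1]$ closes the argument.

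I do not expect a genuine obstacle here. The one point to be careful about is to union-bound directly over the $i$ vertices that are actually present — so that the failure probability of each is literally the probability that a uniform point's ball is empty of the other i.i.d.\ points — rather than over a covering of $\mathbb T^d_n$ by small balls, which would be wasteful. The real substance of the lemma, and the reason it feeds the later decomposition of decreasing paths, is the calibration of $r_i$: it sits a factor $(\log i)^{1/d}$ above the typical inter-point scale $(n/i)^{1/d}$, and this logarithmic cushion is exactly what converts the per-index estimate into something summable over the whole range $i\in[k+1,n-1]$ down to $1/k$.
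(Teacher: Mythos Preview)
Your argument is essentially identical to the paper's: a union bound over $i\in[k+1,n-1]$, then over the $i$ embedded vertices, followed by the observation that the ball of radius $r_i$ has volume $\Theta(n\log i/i)$ so that each term is at most $i^{-2}$ and the tail sum telescopes to $1/k$. Your handling of the constants (asserting that $|B_d(1)|\,r_i^d \ge 3n\log i/i$) and the $(i-1)/i$ factor mirrors the paper's treatment of the same point.
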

\begin{proof}
Fix $k\ge k_0$. Then, for every point $x\in \mathbb T^d_n$ and every $r\le n^{1/d}/2$,
\begin{equation}\label{eq:ballvolume}
    |B(x,r)| = r^d |B(x,1)| = r^d \dfrac{\pi^{d/2}}{\Gamma(d/2+1)}\le r^d\dfrac{\pi^{d/2}}{(d/2e)^{d/2}}\le \left(\dfrac{5 r}{\sqrt{d}}\right)^d.
\end{equation}
Thus, 
\begin{equation*}
    \mathbb P(\overline{\mathcal B_k}) \le \sum_{i=k+1}^{n-1}  \mathbb P(\overline{\mathcal A_i})\le \sum_{i=k+1}^{n-1} i \left(1-\dfrac{3 \log i}{i}\right)^{i-1} \le \sum_{i=k+1}^{n-1} \dfrac{1}{i^2}\le \sum_{i=k+1}^{+\infty} \left(\dfrac{1}{i-1} - \dfrac{1}{i}\right) = \dfrac{1}{k},
\end{equation*}
where the second inequality comes from a union bound and the fact that, by~\eqref{eq:ballvolume}, the ball in $\mathbb T^d_n$ with radius $\frac{\sqrt{d}}{5} \left(\frac{3n\log i}{i}\right)^{1/d}\le n^{1/d}/2$ has area at most $\frac{3n\log i}{i}$, and the third inequality comes from the fact that, for every $i\ge 3$, $(1 - 3\log i/i)^{i-1} \le 1/i^3$. The lemma follows.
\end{proof}

Denote by $\pnt(v)$ the parent of the vertex $v$ (by convention, set $\pnt(0) = 0$), and by $\pnt^{k}(v)$ its $k$-th ancestor. 
In particular, $\pnt(v) = \pnt^1(v)$. 
We state the following observation used several times without explicit mention.

\begin{observation}
The distribution of $\pnt^k(v)$ conditionally on $(\pnt^i(v))_{i\in [k-1]}$ is uniform among $[\pnt^{k-1}(v)-1]_0$.
\end{observation}

As a result, one may naturally couple the construction of the random path from $v$ to the root with the following process constructed via a family of iid random variables $(U_k)_{k\ge 1}$ distributed uniformly in $[0,1)$. 
Set $v_0 = v$. Then, for all $k\ge 1$, if $v_{k-1}\neq 0$, connect it by an edge to the vertex $v_k := \lfloor v_{k-1} U_k\rfloor$ (one may easily check that this choice is uniform among $0,1,2,\dots, v_{k-1}-1$). In particular, for all $k\ge 0$, such that $\pnt^{k-1}(v)\neq 0$ we have that $\pnt^{k-1}(v) U_k - 1\le \pnt^{k}(v)\le \pnt^{k-1}(v) U_k$, or also 
\begin{equation}\label{eq path constr}
    v U_1\dots U_k - k\le \pnt^k(v)\le v U_1\dots U_k.
\end{equation}

Next, we use the above construction to show that the path in $G_n$ from the vertex $k$ to $\left[\lfloor k/2\rfloor\right]_0$ has a ``relatively small'' Euclidean length, that is, the sum of the lengths of its edges is small.

\begin{lemma}\label{lem 1 step}
For every $k\ge 2k_0$, conditionally on the event $\mathcal B_{\lfloor k/2\rfloor}$, the path from $k$ to $\left[\lfloor k/2\rfloor\right]_0$ in $G_n$ is of Euclidean length at most $\frac{3\sqrt{d}}{5} \left(\frac{n\log(k/2)}{k/2}\right)^{1/d}$ with probability at least $1 - \left(\frac{e\log 2}{3\log k}\right)^{3\log k}$.
\end{lemma}
\begin{proof}
Define the hitting time $\tau_k = \min\{t\in \mathbb N:\hspace{0.2em} U_1 U_2\dots U_t\le 1/2\}$, which can be rewritten as $\tau_k = \min \{t\in \mathbb N:\hspace{0.2em} X_1+\dots+X_t\ge \log 2\}$ for $X_i := -\log U_i$ for all $i\ge 1$. 
By Lemma~\ref{lem erlang}(i) and the fact that $(X_i)_{i\ge 1}$ are independent exponential random variables with mean 1, for every $k\ge 2$, we conclude that
\begin{equation*}
    \mathbb P(\tau_k\ge \lceil 3\log k\rceil) = \mathbb P\left(\sum_{i=1}^{\lceil 3\log k\rceil} X_i\le \log 2\right)\le \dfrac{2(\log 2)^{\lceil 3\log k\rceil}}{\lceil 3\log k\rceil !}\le \dfrac{(\log 2)^{\lceil 3\log k\rceil}}{(\lceil 3\log k\rceil/e)^{\lceil 3\log k\rceil}}\le \left(\dfrac{e \log 2}{3\log k}\right)^{3\log k}.
\end{equation*}
\noindent
Moreover, by~\eqref{eq path constr} we have $\{\tau_k\le \lfloor 3\log k\rfloor\}\subseteq \{\pnt^{\lfloor 3\log k\rfloor}(k)\le k/2\}$, and conditionally on $\mathcal B_{\lfloor k/2\rfloor}$, 
for all $t > k/2$, every vertex in $G_t$ is at Euclidean distance at most $\frac{\sqrt{d}}{5} \left(\frac{3 n \log(k/2)}{k/2}\right)^{1/d}$ to the remaining vertices in $G_t$. 
\end{proof}

\begin{lemma}\label{lem length}
Fix integers $\ell > m\ge 1$ with $m$ sufficiently large. 
Conditionally on the event $\mathcal B_{\lfloor m/2\rfloor}$, the (Euclidean) length of the path from $\ell$ to $[m]_0$ in $G_n$ is at most $2\sqrt{d}\left(\frac{(\log m)^{d+1} n}{m}\right)^{1/d}$ with probability at least $1-\exp(-\log m\log\log m)$.
\end{lemma}
\begin{proof}
We regroup the vertices in $[m+1,\ell]$ into groups with indices $[m+1, 2m], [2m+1,4m], \dots, [2^s m+1, l]$ where $s$ is the unique integer satisfying $2^s m + 1\le \ell \le 2^{s+1} m$. For any $k\in [s]_0$, by Lemma~\ref{lem 1 step} and a union bound over the vertices with indices in $[2^{k} m+1, 2^{k+1} m]$, we deduce that with probability at least $1 - 2^k m \left(\frac{e \log 2}{3\cdot \log(2^{k} m)}\right)^{3\log(2^{k} m)}$, for all $v\in [2^{k} m+1, 2^{k+1} m]$, the path from $v$ to $[2^k m]_0$ in $G_n$ has (Euclidean) length at most $\frac{\sqrt{d} \log(2^{k+1} m)}{5} \left(\frac{3 \log(2^{k} m)}{2^{k} m}\right)^{1/d} n^{1/d}$.
For all $\xi > 1$, define the functions
\begin{equation*}
    H_1(\xi) = \sum_{k=0}^{\infty} 2^k \xi\left(\dfrac{e\log 2}{3\cdot \log(2^k \xi)}\right)^{3 \log(2^k \xi)},
\end{equation*}
and
\begin{equation*}
    H_2(\xi) = \sum_{k=0}^{\infty} \dfrac{\sqrt{d} \log(2^{k+1} \xi)}{5} \left(\dfrac{3 \log(2^k \xi)}{2^k \xi}\right)^{1/d}.
\end{equation*}
\noindent
On the event $\mathcal B_{\lfloor m/2\rfloor}$, a union bound over all $s+1$ groups shows that, with probability at least $1-H_1(m)$, the (Euclidean) length of the path from $\ell$ to $[m]_0$ in $G_n$ is at most $H_2(m) n^{1/d}$.

By standard analysis, for all large enough $m$, we have
$$H_1(m) = O\left(m\left(\dfrac{e\log 2}{3\log m}\right)^{3\log m}\right) \le \exp\left(-\log m\log\log m \right),$$
and
\begin{align}
    H_2(m) &=\hspace{0.3em} \sum_{k=0}^{\infty} \dfrac{\sqrt{d} \log(2^{k+1} m)}{5} \left(\dfrac{3 \log(2^k m)}{2^k m}\right)^{1/d}\nonumber\\
    &\le\hspace{0.3em} \sum_{k=0}^{\infty} \dfrac{1.1 \sqrt{d} \log(2^k m)}{5} \left(\dfrac{3 \log(2^k m)}{2^k m}\right)^{1/d}\nonumber\\
    &=\hspace{0.3em} \sum_{k=0}^{\infty} \dfrac{1.1 \sqrt{d} \log m}{5} \left(\dfrac{3 \log m}{m}\right)^{1/d} \left(\dfrac{1}{2^k}\left(\dfrac{k \log 2}{\log m}+1\right)^{d+1}\right)^{1/d}\nonumber\\
    &\le\hspace{0.3em} \dfrac{1.2 \sqrt{d} \log m}{5} \left(\dfrac{3 \log m}{m}\right)^{1/d} \hspace{0.3em}\sum_{k=0}^{\infty} 2^{-k/d}.\nonumber
\end{align}
Hence, for all sufficiently large $m$,
\begin{equation}\label{upper bound eq}
H_2(m)\le \frac{3^{1/d}\cdot 1.2 \sqrt{d}\log m}{5(1-2^{-1/d})}\left(\frac{\log m}{m}\right)^{1/d}\le 2\sqrt{d}\left(\frac{(\log m)^{d+1}}{m}\right)^{1/d},
\end{equation}
and this concludes the proof of the lemma.
\end{proof}

\begin{proof}[Proof of Theorem~\ref{thm 4.1}]
Fix $m$ as in Theorem~\ref{thm 4.1}.  
On the one hand, by Lemma~\ref{lem length} and a union bound,
\begin{equation*}
    \mathbb P(\overline{\mathcal A}\mid \mathcal B_{\lfloor m/2\rfloor})\le \sum_{\ell=m+1}^{n-1} \exp(-\log m\log\log m)\le n \exp(-\log m\log\log m) = o(1/m).
\end{equation*}
On the other hand, by Lemma~\ref{lem B_k}, the event $\overline{\mathcal B_{\lfloor m/2\rfloor}}$ holds with probability at most $1/\lfloor m/2\rfloor \le 2.1/m$. This concludes the proof since $\mathbb P(\overline{\mathcal A})\le \mathbb P(\overline{\mathcal A}\mid\mathcal B_{\lfloor m/2\rfloor}) + \mathbb P(\overline{\mathcal B_{\lfloor m/2\rfloor}}) \le 3/m$.
\end{proof}

We finish this subsection with an unsurprising but nevertheless important property of the embedded tree $G_n$. 
Recall that, for two vertices $u$ and $v$, the stretch factor between $u$ and $v$ is the ratio of the Euclidean length of the path in $G_n$ between $u$ and $v$ and the Euclidean distance between $u$ and $v$. 
Clearly, the stretch factor is at least $1$; we show as a consequence of Lemma~\ref{lem length} that for two uniformly chosen vertices it is indeed of order 1 a.a.s.

\begin{corollary}\label{cor stretch}
For every $\varepsilon\in (0, d^{-d/2})$, there exists a constant $c=c(\varepsilon, d) > 0$ such that, with probability at least $1-\varepsilon$, the stretch factor of two uniformly chosen vertices $u$ and $v$ is at most $c$. 
\end{corollary}
\begin{proof}
Fix $\varepsilon\in (0, d^{-d/2})$ and a sufficiently large integer $M = M(\varepsilon)\ge 1$ so that, first, 
Lemma~\ref{lem length} holds for all $m\ge M$ and $\ell\ge m+1$, and second, $\exp(-\log M\log\log M)\le \varepsilon/2$. 
Set also $c_1 = c_1(\varepsilon, d):= 2\cdot 2\sqrt{d}\left(\frac{(\log M)^{d+1}}{M}\right)^{1/d} + M \sqrt{d}$. 
Then, by Lemma~\ref{lem length}, the Euclidean length of the path in $G_n$ from any vertex to $[M]_0$ is at most $2\sqrt{d}\left(\frac{(\log M)^{d+1} n}{M}\right)^{1/d}$ with probability at least $1 - \varepsilon/2$. Thus, the Euclidean length of the path between $u$ and $v$ is at most $c_1 n^{1/d}$ with probability at least $1 - \varepsilon/2$.

On the other hand, with probability at least $1-\varepsilon/2$, the Euclidean distance between $u$ and $v$ is at least $c_2 n^{1/d} := \sqrt{d}(\varepsilon n/2)^{1/d}/5 < n^{1/d}/4$: indeed, by~\eqref{eq:ballvolume} one has that $|B(u,c_2 n^{1/d})|\le (5c_2 n^{1/d}/\sqrt{d})^d = \varepsilon n/2$ and, therefore, the probability that $v$ falls in the ball $B(u,c_2 n^{1/d})$ is at most $\varepsilon/2$. 
We deduce that with probability at least $1-\varepsilon$, the stretch factor between $u$ and $v$ is at most $c = c_1/c_r$, which finishes the proof.
\end{proof}

\begin{remark}
Note that the previous corollary cannot be improved in the following sense: 
for every $d \ge 2$ and every constant $C > 0$, there exists $\varepsilon = \varepsilon(C) > 0$ such that, with probability at least $\varepsilon$, the stretch factor between two uniformly chosen vertices of $G_n$ at least $C$. 
We provide the following non-rigorous (but hopefully convincing) justification: tessellate $\mathbb T^d_n$ into $d$-cubes of side length $n^{1/d}/K$ for $K=K(C,d)$ large enough, 
and order the $K^d$ $d$-cubes so that every two consecutive cubes share a common $(d-1)$-side. 
The following events hold together with constant probability depending only on $K$: 
\begin{itemize}
    \item For every vertex $i \in [K^d-1]_0$, the vertex $i$ is embedded in the cube $i+1$ at distance at least $n^{1/d}/4K$ from its boundary. Moreover, if $i\ge 1$, vertex $i$ connects by an edge to $i-1$.
    \item The closest ancestor of $u$ among $[K^d-1]_0$ is $0$, and the closest ancestor of $v$ among $[K^d-1]_0$ is $K^d-1$.
\end{itemize}
Then, the Euclidean distance between $u$ and $v$ is bounded from above by $\sqrt{d} n^{1/d}$ but the Euclidean length of all edges on the path between them is bounded from below by $(K^{d}-1) n^{1/d}/2K$, which is at least $C \sqrt{d} n^{1/d}$ for all sufficiently large $K$.
\end{remark}

\subsection{\texorpdfstring{The height and the diameter of $G_n$}{}}\label{sec height of G_n}
Recall that, for a rooted tree $T$ and a vertex $v\in T$, we denote by $h(v, T)$ the distance from $v$ to the root of $T$, by $h(T)$ the height of $T$, and by $\mathrm{diam}(T)$ the diameter of $T$. 
The aim of this section is to analyze $h(G_n)$ and $\diam(G_n)$ (where $G_n$ is seen as a tree rooted at vertex 0), thus proving Theorem~\ref{thm diam}.

\begin{proof}[Proof of Theorem~\ref{thm diam}\eqref{line a} and the upper bound in Theorem~\ref{thm diam}\eqref{line b}]
We first prove \eqref{line a}. Note that it is equivalent to prove the statement if the path to 0 starts from an artificially added vertex with label $n$ (by doing this, we only increase the length of the path in $G_n$ by one since $\pnt(n)$ is chosen uniformly in $[n-1]_0$). 
Also, recall the construction of the path from a fixed vertex to the root via $(U_i)_{i\ge 1}$ given just above Lemma~\ref{lem 1 step}. Let
\begin{align*}
&T_1 = \min\{k: n U_1 U_2\dots U_k < 1\},\\
&T_2 = \min\{k: n U_1 U_2\dots U_k - k < 1\}.
\end{align*}
By Lemma~\ref{lem erlang} applied to the random variables $X_i = -\log U_i$, we may conclude that $T_1\le (1+\varepsilon)\log n$ and $T_2\ge (1-\varepsilon) \log n$ a.a.s.: indeed,
\begin{align}
    \mathbb P(T_1 > (1+\varepsilon)\log n)
    &=\hspace{0.3em} \mathbb P(X_1+\dots+X_{\lfloor (1+\varepsilon)\log n\rfloor}\le \log n)\nonumber\\
    &=\hspace{0.3em}\sum_{i=\lfloor (1+\varepsilon)\log n\rfloor}^{\infty} \dfrac{1}{i!} \exp(-\log n) (\log n)^i\nonumber\\
    &\le\hspace{0.3em} \exp(-\log n) \left(\dfrac{e \log n}{\lfloor (1+\varepsilon) \log n\rfloor}\right)^{\lfloor (1+\varepsilon) \log n\rfloor} \sum_{k=0}^{\infty} \dfrac{1}{(1+\varepsilon)^k}\nonumber\\
    &\le\hspace{0.3em} \left(\dfrac{1}{e}\left(\dfrac{e}{1+\varepsilon}\right)^{1+\varepsilon+o(1)}\right)^{\log n}\label{eq bla bla}.
\end{align}

Note that, by standard analysis, the function $x\in [1, +\infty)\mapsto (e/x)^x$ is decreasing, so $(e/x)^x < e$ for every $x > 1$. 
Thus, \eqref{eq bla bla} tends to 0 with $n$ for any fixed $\varepsilon > 0$. 
A similar computation ensures that $\mathbb P(T_2 < (1-\varepsilon) \log n) = o(1)$. 
This finishes the proof of Theorem~\ref{thm diam}\eqref{line a} since the hitting time of 0 from $n$ is dominated by $T_1$ and dominates $T_2$ due to~\eqref{eq path constr}.

For the upper bound in~\eqref{line b}, we first show that, for any $\varepsilon > 0$, the path from the artificially added vertex $n$ to $0$ has length more than $(e+\varepsilon) \log n$ with probability at most $o(1/n)$. 
Indeed, the length of the path from $n$ to $0$ stochastically dominates the length of any other path to the root (note that, for every $j\in [n-1]_0$, one has $\pnt^{k+1}(n) \sim \lfloor U_{k+1}\pnt^k(n)\rfloor$ and $\pnt^{k+1}(j) \sim \lfloor U_{k+1}\pnt^k(j)\rfloor$ for all $k\ge 0$). Moreover, $T_1$ dominates the length of the path from $n$ to $0$, and by Lemma~\ref{lem erlang}(i) for $X_i = -\log U_i$ we have
\begin{align*}
    \mathbb P(T_1 > (e+\varepsilon)\log n)
    &\le\hspace{0.3em} \mathbb P(X_1+\dots+X_{\lfloor(e+\varepsilon)\log n\rfloor}\le \log n)\\
    &\le\hspace{0.3em} \dfrac{2}{\lfloor(e+\varepsilon)\log n\rfloor!}\dfrac{(\log n)^{\lfloor(e+\varepsilon)\log n\rfloor}}{n}\\
    &\le\hspace{0.3em} \dfrac{2}{n}\left(\dfrac{e\log n}{\lfloor(e+\varepsilon)\log n\rfloor}\right)^{\lfloor(e+\varepsilon)\log n\rfloor} = o\left(\dfrac{1}{n}\right).
\end{align*}
The upper bound now follows from a union bound over all $n$ vertices.
\end{proof}

We turn our attention to the lower bound in Theorem~\ref{thm diam}\eqref{line b}. 
Recall that the Poisson $d$-NN tree $\PoNN$ is defined by sampling a Poisson random variable $N$ with mean $n$ 
and constructing the nearest neighbor tree on $N$ vertices in $\mathbb T^d_n$.

\begin{theorem}\label{thm diam poisson}
Fix $d\ge 1$ and $\varepsilon > 0$. Then, a.a.s.\ $\mathrm{diam}(\PoNN)\ge (2e-\varepsilon)\log n$.
\end{theorem}
\begin{proof}[Proof of the lower bound of Theorem~\ref{thm diam}\eqref{line b} assuming Theorem~\ref{thm diam poisson}]
We have that
\begin{equation}\label{eq LB poisson}
    \mathbb P(N\le n) = \sum_{i=0}^{n} \dfrac{\exp(-n) n^i}{i!}\ge \sum_{i=0}^{n} \dfrac{\exp(i-n)(n/i)^{i}}{e\sqrt{i}},
\end{equation}
where the inequality above follows from the classical bound $i!\le (i/e)^i e\sqrt{i}$ for all $i\ge 1$. Moreover, if $i\in [\lfloor n-\sqrt{n}\rfloor,n]$, there is a constant $C > 0$ such that
\begin{equation*}
    \left(\dfrac{n}{i}\right)^i = \exp\left(i \log\left(1 + \dfrac{n-i}{i}\right)\right) \ge \exp(n-i - C),
\end{equation*}
so we conclude that \eqref{eq LB poisson} is bounded from below by
\begin{equation*}
    \dfrac{1}{e\sqrt{n}}\sum_{i=\lfloor n-\sqrt{n}\rfloor}^{n} \exp(-C) \ge \dfrac{1}{\exp(1+C)} > 0.
\end{equation*}
\noindent
Thus, since the diameter of a tree is an increasing parameter under addition of new vertices, we have
\begin{align*}
    \mathbb P(\diam(G_n)\le (2e-\varepsilon)\log n)
    &\le\hspace{0.3em} \mathbb P(\diam(\PoNN)\le (2e-\varepsilon)\log n\mid N\le n)\\ 
    &\le\hspace{0.3em} \dfrac{\mathbb P(\diam(\PoNN)\le (2e-\varepsilon)\log n)}{\mathbb P(N\le n)} = o(1),
\end{align*}
and the proof is completed.
\end{proof}

The last part of this section is dedicated to proving Theorem~\ref{thm diam poisson}. We now present a high level overview of the main strategy, which is based on a renormalization argument. Given $\varepsilon > 0$, fix a sufficiently large positive integer $k = k(\varepsilon)$ and for each of the $N$ vertices of $\PoNN$, sample independently a vector of $k$ i.i.d.\ Bernoulli random variables $(Y_{v,1}, Y_{v,2}, \dots, Y_{v,k})$ with parameters $n^{-1/k}$. To every vertex, associate a color $c(v)$ among $\{c_1,c_2,\dots,c_k,c_{k+1}\}$, where $c(v) = c_i$ if $Y_{v,1} = \dots = Y_{v,i-1} = 1$, and if $i \le k$, then in addition we also must have $Y_{v,i} = 0$. The aim of these colors is to regroup the vertices of $\PoNN$ according to their arrival time, where vertices with color $c_i$ arrive before vertices of color $c_j$ if $i > j$, but at the same time keep the sizes of the color classes random to avoid dependencies. By our choice of parameters almost all vertices will have color $c_1$, a lot less will have color $c_2$, even less will have color $c_3$, and so on.

Now, define $R = R(n, d, k) := \lfloor (\log n)^{k(d+2)/d}\rfloor^d$ and $L = L(n, d, k) := R \lfloor R^{-1/d} n^{1/k d} (\log n)^{-(d+2)/d}\rfloor^{d}$, 
and embed the $N$ vertices in $\mathbb T^d_n$ without revealing anything but their geometric positions, and consider a tessellation $\mathcal T_1$ of $\mathbb T^d_n$ into $R L^{k-1}$ $d$-cubes, each of volume $(1+o(1)) n^{1/k}(\log n)^{d+2}$ but larger than $n^{1/k}(\log n)^{d+2}$. 
For every cube $q_1$ in $\mathcal T_1$ and every vertex $v_1$ in $q_1$, we assign a label associated to $v_1$ which equals $|\{u \in V(\PoNN: u \in q_1, X_u < X_{v_1})\}|$, that is, the number of vertices $u \in q_1$, which arrived before $v_1$. We say that the label of $v_1$ with respect to $q_1$ is its 1-label. Next, in every cube $q_1\in \mathcal T_1$, we look for a decreasing path (with respect to the 1-labels) of length $(e-\varepsilon)(\log n)/k$ inside this cube between an arbitrary vertex in this cube and one of color different from $c_1$. In the construction of such a path, the colors and the 1-labels are revealed consecutively: if the parent of a vertex is detected, first its color is revealed, and if it is $c_1$, then its 1-label is revealed as well. Once a vertex of color different from $c_1$ is found, we stop the construction of the decreasing path without revealing the label of this last vertex (at this point we only know that its color is not $c_1$). Roughly speaking, this path consists of ancestors belonging to the same smallest cube, see Figure~\ref{fig 6}.

At this point, we show that a sufficiently large number of cubes contain decreasing paths of length $(e-\varepsilon)(\log n)/k$ in their interior. 
Then, we regroup the cubes of $\mathcal T_1$ into larger sets that form a coarser tessellation $\mathcal T_2$ containing $R L^{k-2}$ cubes, each of volume $(1+o(1)) n^{2/k}(\log n)^{2(d+2)}$ but larger than $n^{2/k}(\log n)^{2(d+2)}$. 
For every cube $q_2$ in $\mathcal T_2$ and every vertex $v_2$ in $q_2$, we assign a 2-label associated to $v_2$ that is equal to the number of vertices that arrived before $v_2$ in $q_2$. 
Note that the order, induced by the 2-labels in a cube $q_1\in \mathcal T_1$, is the same as the order given by their 1-labels in $q_1$.  
Next, we look for decreasing paths (with respect to the 2-labels) between the vertices which are ends of paths of length $(e-\varepsilon)(\log n)/k$ from the previous stage to the vertices in colors different from $c_1$ and $c_2$. 
Once again, we show that sufficiently many such cubes contain paths with length at least $(e-\varepsilon)(\log n)/k$.

We repeat the above procedure $k-3$ times. 
Then, by gluing the paths constructed at all $k-3$ stages, we show that a.a.s.\ one may find a decreasing path of length $(k-3)(e-\varepsilon)(\log n)/k$ (see Figure~\ref{fig 6}). 
We keep at least two such disjoint paths to make the transition from $h(\PoNN)$ to $\diam(\PoNN)$.

\begin{figure}
\centering
\begin{tikzpicture}[line cap=round,scale=0.6,line join=round,x=1cm,y=1cm]
\clip(-9.5,-20.99310136694685) rectangle (34.032489021253255,5.896464733479574);
\draw [line width=0.8pt,dash pattern=on 1pt off 1.5pt] (0.3,-2.5)-- (0.3,-2.4);
\draw [line width=0.8pt,dash pattern=on 1pt off 1.5pt] (0.3,-2.4)-- (0.5,-2.3);
\draw [line width=0.8pt,dash pattern=on 1pt off 1.5pt] (0.5,-2.3)-- (0.6,-2.7);
\draw [line width=0.8pt,dash pattern=on 1pt off 1.5pt] (0.6,-2.7)-- (0.8,-2.5);
\draw [line width=0.8pt,dash pattern=on 1pt off 1.5pt] (0.8,-2.5)-- (0.6,-2.9);
\draw [line width=0.8pt,dash pattern=on 1pt off 1.5pt] (0.6,-2.9)-- (0.2,-2.8);
\draw [line width=0.8pt,dash pattern=on 0.8pt off 3pt] (0.2,-2.8)-- (-1,-2.5);
\draw [line width=0.8pt,dash pattern=on 0.8pt off 3pt] (-1,-2.5)-- (-1,-3.5);
\draw [line width=0.8pt,dash pattern=on 0.8pt off 3pt] (-1,-3.5)-- (0,-4.5);
\draw [line width=0.8pt,dash pattern=on 0.8pt off 3pt] (0,-4.5)-- (1.5,-4);
\draw [line width=0.8pt,dash pattern=on 0.8pt off 3pt] (1.5,-4)-- (2.5,-4.5);
\draw [line width=0.8pt,dash pattern=on 0.8pt off 3pt] (2.5,-4.5)-- (2.5,-3.5);
\draw [line width=0.8pt,dash pattern=on 0.8pt off 3pt] (2.5,-3.5)-- (2,-2.5);
\draw [line width=0.8pt,dash pattern=on 0.8pt off 3pt] (2,-2.5)-- (1.5,-1);
\draw [line width=0.8pt,dash pattern=on 0.8pt off 0.8pt] (1.5,-1)-- (0,-1);
\draw [line width=0.8pt,dash pattern=on 0.8pt off 0.8pt] (0,-1)-- (-1,-0.5);
\draw [line width=0.8pt] (0,-2)-- (0,-3);
\draw [line width=0.8pt] (0,-3)-- (1,-3);
\draw [line width=0.8pt] (1,-3)-- (1,-2);
\draw [line width=0.8pt] (1,-2)-- (0,-2);
\draw [line width=0.8pt] (-2,0)-- (3,0);
\draw [line width=0.8pt] (3,-5)-- (3,0);
\draw [line width=0.8pt] (-2,-5)-- (3,-5);
\draw [line width=0.8pt] (-2,-5)-- (-2,0);
\draw [line width=0.8pt] (-1,-0.5)-- (2,4);
\draw [line width=0.8pt] (2,4)-- (6,0);
\draw [line width=0.8pt] (6,0)-- (12,2);
\draw [line width=0.8pt] (12,2)-- (16,-4);
\draw [line width=0.8pt] (16,-4)-- (10,-8);
\draw [line width=0.8pt] (10,-8)-- (10,-14);
\draw [line width=0.8pt] (10,-14)-- (12,-18);
\draw [line width=0.8pt] (12,-18)-- (10,-18);
\draw [line width=0.8pt] (10,-18)-- (2,-10);
\draw [line width=0.8pt] (2,-10)-- (-2,-16);
\draw [line width=0.8pt] (-2,-16)-- (-6,-8);
\draw [line width=0.8pt] (-7,5)-- (18,5);
\draw [line width=0.8pt] (18,5)-- (18,-20);
\draw [line width=0.8pt] (18,-20)-- (-7,-20);
\draw [line width=0.8pt] (-7,-20)-- (-7,5);
\end{tikzpicture}
\caption{Gluing together long paths -- towards the proof of Theorem~\ref{thm diam}\eqref{line b}.}
\label{fig 6}
\end{figure}

\vspace{0.5em}
We now proceed to the formal realization of the above strategy. 
We start with the formal definition of the tessellations. 
Define $(\mathcal T_i)_{1\le i\le k-2}$ as the family of tessellations such that, first, $\mathcal T_i$ refines $\mathcal T_{i+1}$ for every $i\in [k-2]$, 
and second, the cubes in $\mathcal T_i$ are all congruent and axis-parallel and have sides of length $(n/R L^{k-i})^{1/d} = (1+o(1)) n^{i/dk} (\log n)^{i(d+2)/d}$. 
Moreover, set $S = S(d,n,i) := 3\sqrt{d} ((\log n)^{d+1} n^{i/k})^{1/d}$ and, for every $i\in [k-2]$ and every $q_i\in \mathcal T_i$, define $\intqi$ ($\overline{\intqi}$, respectively) to be the central axis-parallel subcube of $q_i$ with boundary formed by the points at distance $2S$ ($S$, respectively) from the boundary of $q_i$ (see Figure~\ref{fig 7}). 

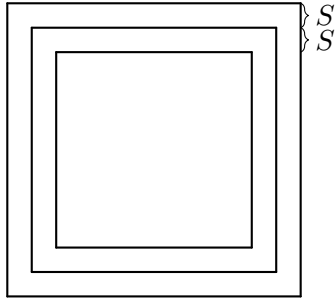
\begin{figure}
    \centering
\begin{tikzpicture}[line cap=round,scale=0.65,line join=round,x=1cm,y=1cm]
\clip(-6.316028466187328,-5.081302215065078) rectangle (10.282350920657251,2.8003181856064043);
\draw [line width=0.8pt] (-1,2)-- (-1,-4);
\draw [line width=0.8pt] (-1,-4)-- (5,-4);
\draw [line width=0.8pt] (5,-4)-- (5,2);
\draw [line width=0.8pt] (5,2)-- (-1,2);
\draw [line width=0.8pt] (-0.5,1.5)-- (4.5,1.5);
\draw [line width=0.8pt] (0,1)-- (4,1);
\draw [line width=0.8pt] (4.5,-3.5)-- (-0.5,-3.5);
\draw [line width=0.8pt] (4,-3)-- (0,-3);
\draw [line width=0.8pt] (-0.5,1.5)-- (-0.5,-3.5);
\draw [line width=0.8pt] (0,1)-- (0,-3);
\draw [line width=0.8pt] (4,1)-- (4,-3);
\draw [line width=0.8pt] (4.5,1.5)-- (4.5,-3.5);
\draw [decorate,decoration={brace,amplitude=3pt},xshift=0pt,yshift=0pt]
(5,1.5)-- (5,1) node {};
\draw [decorate,decoration={brace,amplitude=3pt},xshift=0pt,yshift=0pt]
(5,2)-- (5,1.5) node {};
\draw[color=black] (5.5,1.25) node {$S$};
\draw[color=black] (5.5,1.75) node {$S$};
\end{tikzpicture}
\caption{$q_i$ is the largest cube, $\overline{\intqi}$ is the second largest, and $\intqi$ is the smallest cube. The exploration process in $q_i$ is stopped if it leaves $\overline{\intqi}$ before finding a vertex of color among $(c_j)_{j=i+1}^{k+1}$.}
\label{fig 7}
\end{figure}

Embed $N$ unlabeled and uncolored vertices in $\mathbb T^d_n$ uniformly at random. Our randomized algorithm performs $k-3$ steps. At step $i\in [k-3]$, expose the random variables $(Y_{v,j})_{v\in [N-1]_0,\, j\in [i-1]}$ and delete all vertices with colors among $c_1, \dots, c_{i-1}$. Then, fix a cube $q_i\in \mathcal T_i$ and an arbitrary vertex $v_i$ (if it exists, otherwise stop the process) remaining in $\intqi$. Start building a decreasing path from $v_i$ by, first, revealing $Y_{\pnt(v_i),i}$, and then:
\begin{itemize}
    \item if $Y_{\pnt(v_i),i} = 0$ (that is, if $c(v_i) = c_i$), then return the $i$-label of this vertex in $q_i$ and repeat the analysis for $\pnt(v_i)$ itself,
    \item if not, stop the process in $q_i$.
\end{itemize}
Also, stop the process if $\pnt(v_i)$ leaves $\overline{\intqi}$.

\begin{lemma}\label{lem stoppage}
Fix $\varepsilon > 0$, $i\in [k-2]$ and a cube $q_i\in \mathcal T_i$. 
Also, fix a vertex $v_i\in \intqi$ satisfying $Y_{v_i,1} = \dots = Y_{v_i,i-1} = 1$. 
Then, there is $\delta  = \delta(\varepsilon) \in (0,1/k)$ such that the following event holds with probability at least $n^{-1/k+\delta}$: 
when the $i$-th stage of the algorithm terminates, the decreasing path from $v_i$ ends in $\overline{\intqi}$ and has length (with respect to the graph distance) at least $(e-\varepsilon)\log n/k$. 
\end{lemma}
\begin{proof}
By Corollary~\ref{cor PoNN} applied with $x_n = n^{-i/k}$ (this corresponds to vertices $v$ with $Y_{v,1}=\dots=Y_{v,i}=1$) the Euclidean length of any path in $\PoNN$, ending at a vertex of color among $\{c_{i+1},\dots,c_{k+1}\}$, is dominated by $S$ with probability at least $1 - 4n^{-(k-i)/k}$. We call this event $\mathcal E_i$.

Now, we estimate the probability that the path from $v_i$ hits the set of vertices in $q_i$ in color among $\{c_{i+1},\dots,c_{k+1}\}$ only after $(e-\varepsilon) (\log n)/k$ or more steps. Recall the construction of the path using the family $(U_i)_{i\ge 1}$. 
Then, since $\mathcal E_i$ holds a.a.s., by~\eqref{eq path constr} it remains to estimate the probability of the event
\begin{equation}\label{eq good event}
\{ n^{(k-i)/k}U_1U_2\dots U_{\lceil (e-\varepsilon)(\log n)/k\rceil}\ge n^{(k-i-1)/k} + \lceil (e-\varepsilon)(\log n)/k\rceil \}.
\end{equation}
Applying Lemma~\ref{lem erlang} to $X_i = -\log U_i$, we obtain that, for every small enough $\varepsilon > 0$, there is a sufficiently small $\delta = \delta(\varepsilon)\in (0,1/k)$ such that
\begin{align*}
&\mathbb P(n^{(k-i)/k}U_1U_2\dots U_{\lceil (e-\varepsilon)(\log n)/k\rceil}
\ge n^{(k-i-1)/k} + \lceil (e-\varepsilon)(\log n)/k\rceil)\\
=\hspace{0.3em} 
&\mathbb P(X_1 + \dots + X_{\lceil (e-\varepsilon)(\log n)/k\rceil}\le (1+o(1)) (\log n)/k) \\
\ge\hspace{0.2em} 
&\dfrac{1}{\lceil (e-\varepsilon)(\log n)/k\rceil!}\exp\left(-(1+o(1))\dfrac{\log n}{k}\right) \left((1+o(1))\log n/k\right)^{\lceil (e-\varepsilon)(\log n)/k\rceil}\ge n^{-1/k+2\delta}.
\end{align*}

\noindent
This finishes the proof of the lemma since the event $\mathcal E_i$ holds jointly with~\eqref{eq good event} with probability at least $n^{-1/k+2\delta} - 4n^{-(k-i)/k} \ge n^{-1/k+\delta}$ by our choice of $i$ and $\delta$.
\end{proof}

\begin{corollary}\label{cor diam final}
For every $\varepsilon > 0$, there is $\delta = \delta(\varepsilon) > 0$ such that, for all $i\in [k-2]$ and $q_i\in \mathcal T_i$, $q_i$ contains a decreasing path of length at least $(i-1) (e-\varepsilon) (\log n)/k$ with probability at least $n^{-1/k+\delta}/2$.
\end{corollary}
\begin{proof}
We argue by induction. Fix $\varepsilon > 0$ and $\delta = \delta(\varepsilon)$ given in Lemma~\ref{lem stoppage}. 

For $i=1$, the statement is trivial. Suppose that the induction hypothesis holds for $j = i-1$. Note that, since $S$ in negligible compared to the side length of $q_i$, $\intqi$ contains at least $(\log n)^{d+1} n^{1/k}$ cubes of $\mathcal T_{i-1}$ (call these smaller cubes \emph{central}). 
Then, by Lemma~\ref{lem stoppage} combined with Chernoff's bound~\eqref{chern} for a fixed cube $q_i\in \mathcal T_i$, $\overline{\intqi}$ contains at least one end of a path of length $(i-1) (e-\varepsilon) (\log n)/k$ which is entirely contained in a central cube with probability $1-o(1/n)$. 
Denote this event by $\mathcal A(q_i)$ and note that a.a.s.\ $\{\mathcal A(q_i): q_i\in \mathcal T_i\}$ all hold together. Under $\mathcal A(q_i)$, assign to $q_i$ one vertex $v_i\in q_i$ with color in $\{c_i, \dots, c_{k+1}\}$ which is the end of a path of length $(i-1)(e-\varepsilon)(\log n)/k$ and start constructing a decreasing path in $q_i$ from $v_i$. 
Under $\mathcal A(q_i)$, by Lemma~\ref{lem stoppage} one reaches a vertex positioned in $\overline{\intqi}$ and with color in $\{c_{i+1},\dots, c_{k+1}\}$ only after at least $(e-\varepsilon) (\log n)/k$ more steps with probability at least $n^{-1/k+\delta}$. Denote this event by $\mathcal B(q_i)$. Thus, we get that
\begin{equation*}
\mathbb P(\mathcal B(q_i))\ge \mathbb P(\mathcal B(q_i)\mid\mathcal A(q_i))\mathbb P(\mathcal A(q_i))\ge n^{-1/k+\delta}/2,
\end{equation*}
and this concludes the induction.
\end{proof}

\begin{proof}[Proof of Theorem~\ref{thm diam poisson}]
Fix two cubes $q'_{k-1}$ and $q''_{k-1}$ in $\mathcal T_{k-1}$. Both $\mathrm{int}(q'_{k-1})$ and $\mathrm{int}(q''_{k-1})$ contain at least $n^{1/k} (\log n)^{d+1}$ cubes in $\mathcal T_{k-2}$, and by Chernoff's bound~\eqref{chern} and Corollary~\ref{cor diam final} some of these a.a.s.\ contain decreasing paths of length at least $(k-3)(e-\varepsilon)(\log n)/k$. 
The fact that these decreasing paths are disjoint (since they are entirely contained in different cubes of $\mathcal T_{k-1}$) finishes the proof of Theorem~\ref{thm diam poisson} up to an appropriate choice of the values of $k$ and $\varepsilon$.
\end{proof}

\section{Proof of Theorem~\ref{thm G infinity} -- on the properties of \texorpdfstring{$G_{\infty}$}{}}\label{sec infty}
This section is dedicated to the analysis of $G_{\infty}$.

\begin{proof}[Proof of Theorem~\ref{thm G infinity}\eqref{item local limit}]
Fix any positive integer $M$ and a vertex $v_n$ of $G_n$ (resp.\ $v_{\infty}$ of $G_{\infty}$) in position $0\in \mathbb T^d_n$ (resp.\ $0\in \mathbb R^d$). 
We will construct a local coupling of $G_n$ and $G_{\infty}$ around $v_n$ and $v_{\infty}$ such that a.a.s.\ the balls $B_{G_n}(v_n, M)$ and $B_{G_{\infty}}(v_{\infty}, M)$ coincide. 
The following arguments tacitly assume the results from Palm theory developed in Subsection~\ref{sec:Palm}, in particular, 
 artificially adding the vertex $v_n$ (resp.\ $v_{\infty}$) at the origin leaves the intensity of the Poisson Point Process unchanged.

The coupling goes as follows: sample the same PPP(1) in the axis-parallel cube $Q_n$ (resp.\ $Q_{\infty}$) of side length $n^{1/2d}$ centered at the origin of $\mathbb T^d_n$ (resp.\ of $\mathbb R^d$). 
Moreover, for every pair of vertices $w_n$ and $w_{\infty}$ in $Q_n$ and $Q_{\infty}$ that correspond to each other, we couple their arrival times so that $X_{w_n} = X_{w_{\infty}}$. 
In the sequel, we refer to both $v_n$ and $v_{\infty}$ (resp.\ $Q_n$ and $Q_{\infty}$) as $v$ (resp.\ as $Q$) when the arguments allow to identify the finite with the infinite setting. 
For all $i\in [M]_0$, denote by $u_i$ the $i$-th ancestor of $v$ (so $u_0 = v$). 
Also, denote by $\mathcal A$ the event $X_{u_M}\ge (\log n)^{-M-1}$. 
One has
\begin{equation*}
\mathbb P(\overline{\mathcal A}) \le \mathbb P\left(\{X_{u_0}\le (\log n)^{-1}\}\cup \bigcup_{i\in [M]}\left\{X_{u_i}\le (\log n)^{-1} X_{u_{i-1}}\right\}\right)\le (M+1)(\log n)^{-1} = o(1).
\end{equation*}
Under the event $\mathcal A$, by Theorem~\ref{thm 4.1} for $i = \lfloor n (\log n)^{-M-1}\rfloor$ and Corollary~\ref{cor PoNN} for $x_n = (\log n)^{-M-1}$ there is a sufficiently large constant $C = C(d,M) > 0$ such that the descending tree $T_{u_M}$ of $u_M$ is a.a.s.\ contained in the ball $B(u_M, (\log n)^{C})\subseteq B(v, 2(\log n)^{C})$. 
Thus, the positions of the vertices of $G_n$ in $\mathbb T^d_n\setminus Q_n$ (resp.\ of $G_{\infty}$ in $\mathbb R^d\setminus Q_{\infty}$) 
a.a.s.\ do not influence the structure of the tree $T_{u_M}$ because the ball $B(v, 2(\log n)^{C})$ has diameter $4(\log n)^{C}$ and is at distance $n^{\Omega_d(1)}\gg 4(\log n)^{C}$ 
from $\mathbb T^d_n\setminus Q_n$, resp.\ from $\mathbb R^d\setminus Q_{\infty}$ (see Figure~\ref{fig 3}). 
Since the ball with center $v_n$ in $G_n$ (resp. $v_{\infty}$ in $G_{\infty}$) is contained in $T_{u_M}$, for every $M\ge 1$, under the constructed coupling a.a.s.\ $B_{G_n}(v_n, M) = B_{G_{\infty}}(v_{\infty}, M)$, 
which proves the local convergence of $(G_n)_{n\ge 1}$ to $G_{\infty}$.
\end{proof}

\begin{figure}
\centering
\begin{tikzpicture}[line cap=round,line join=round,x=1cm,y=1cm]
\clip(-10.4,-4.146057605080144) rectangle (7.90375770354596,5.124930735890613);
\draw [line width=0.8pt] (-6,5)-- (-6,-4);
\draw [line width=0.8pt] (-6,-4)-- (3,-4);
\draw [line width=0.8pt] (3,-4)-- (3,5);
\draw [line width=0.8pt] (3,5)-- (-6,5);
\draw [line width=0.8pt] (-1.5,0.5)-- (-2,0.5);
\draw [line width=0.8pt] (-2,0.5)-- (-1.5,1);
\draw [line width=0.8pt] (-1.5,1)-- (-0.5,0.5);
\draw [line width=0.8pt] (-0.5,0.5)-- (-1,-0.5);
\draw [line width=0.8pt] (-1,-0.5) circle (1.7cm);
\draw [line width=0.8pt] (-1.5,0.5) circle (3.5355339059327378cm);
\draw [line width=0.8pt] (-1.5,0.5)-- (-1.6,0.2);
\draw [line width=0.8pt] (-1.5,0.5)-- (-1.4,0.2);
\draw [line width=0.8pt] (-1.6,0.2)-- (-1.8,0.2);
\draw [line width=0.8pt] (-1.6,0.2)-- (-1.8,0);
\draw [line width=0.8pt] (-1.6,0.2)-- (-1.6,0);
\draw [line width=0.8pt] (-2,0.5)-- (-2.2,0.2);
\draw [line width=0.8pt] (-1.5,1)-- (-1.3,1.1);
\draw [line width=0.8pt] (-1.5,1)-- (-1,1);
\draw [line width=0.8pt] (-1,1)-- (-0.6,1);
\draw [line width=0.8pt] (-1,1)-- (-0.8,0.8);
\draw [line width=0.8pt] (-1.3,1.1)-- (-1.1,1.1);
\draw [line width=0.8pt] (-0.5,0.5)-- (-0.4,0.8);
\draw [line width=0.8pt] (-0.5,0.5)-- (-0.2,0.6);
\draw [line width=0.8pt] (-0.5,0.5)-- (-0.2,0.2);
\draw [line width=0.8pt] (-0.2,0.2)-- (-0.4,0);
\draw [line width=0.8pt] (-0.4,0)-- (-0.2,-0.2);
\draw [line width=0.8pt] (-0.2,0.2)-- (0.2,0.2);
\draw [line width=0.8pt] (-1,-0.5)-- (-1.4,-0.6);
\draw [line width=0.8pt] (-1,-0.5)-- (-1.2,-0.8);
\draw [line width=0.8pt] (-1.4,-0.6)-- (-1.8,-1);
\draw [line width=0.8pt] (-1.4,-0.6)-- (-1.4,-1);
\draw [line width=0.8pt] (-1.2,-0.8)-- (-1.2,-1.2);
\draw [line width=0.8pt] (-1.2,-0.8)-- (-1,-1);
\draw [line width=0.8pt] (-1,-0.5)-- (-0.4,-0.8);
\draw [line width=0.8pt] (-1,-0.5)-- (-0.6,-1.2);
\draw [line width=0.8pt] (-0.6,-1.2)-- (-0.2,-1.2);
\draw [line width=0.8pt] (-0.2,-1.2)-- (0,-1);
\draw [line width=0.8pt] (-0.4,-0.8)-- (-0.4,-0.4);
\draw [line width=0.8pt] (-0.4,-0.8)-- (-0.2,-0.6);
\draw [line width=0.8pt] (-1.8,-1)-- (-1.6,-1.6);
\draw [line width=0.8pt] (-1.8,-1)-- (-2.2,-1);
\draw [line width=0.8pt] (-2.2,-1)-- (-2.2,-0.8);
\draw [line width=0.8pt] (-2.2,-1)-- (-2.6,-0.6);
\draw [line width=0.8pt] (-1.6,-1.6)-- (-1.4,-1.8);
\draw [line width=0.8pt] (-1.2,-1.2)-- (-1.2,-1.6);
\draw [line width=0.8pt] (-1.2,-1.2)-- (-0.8,-1.6);
\draw [line width=0.8pt] (-0.8,-1.6)-- (-1,-1.8);
\draw [line width=0.8pt] (-0.8,-1.6)-- (-0.6,-1.8);
\begin{scriptsize}
\draw [fill=black] (-1.5,0.5) circle (1pt);
\draw[color=black] (-1.4,0.62) node {$u_0$};
\draw [fill=black] (-2,0.5) circle (1pt);
\draw [fill=black] (-1.5,1) circle (1pt);
\draw [fill=black] (-0.5,0.5) circle (1pt);
\draw [fill=black] (-1,-0.5) circle (1pt);
\draw[color=black] (-1.2,-0.38) node {$u_M$};
\draw[color=black] (2.5,4.5) node {\Large{$Q$}};
\draw [fill=black] (-1.6,0.2) circle (1pt);
\draw [fill=black] (-1.4,0.2) circle (1pt);
\draw [fill=black] (-1.8,0.2) circle (1pt);
\draw [fill=black] (-1.8,0) circle (1pt);
\draw [fill=black] (-1.6,0) circle (1pt);
\draw [fill=black] (-2.2,0.2) circle (1pt);
\draw [fill=black] (-1.3,1.1) circle (1pt);
\draw [fill=black] (-1,1) circle (1pt);
\draw [fill=black] (-0.6,1) circle (1pt);
\draw [fill=black] (-0.8,0.8) circle (1pt);
\draw [fill=black] (-1.1,1.1) circle (1pt);
\draw [fill=black] (-0.4,0.8) circle (1pt);
\draw [fill=black] (-0.2,0.6) circle (1pt);
\draw [fill=black] (-0.2,0.2) circle (1pt);
\draw [fill=black] (-0.4,0) circle (1pt);
\draw [fill=black] (-0.2,-0.2) circle (1pt);
\draw [fill=black] (0.2,0.2) circle (1pt);
\draw [fill=black] (-1.4,-0.6) circle (1pt);
\draw [fill=black] (-1.2,-0.8) circle (1pt);
\draw [fill=black] (-1.8,-1) circle (1pt);
\draw [fill=black] (-1.4,-1) circle (1pt);
\draw [fill=black] (-1.2,-1.2) circle (1pt);
\draw [fill=black] (-1,-1) circle (1pt);
\draw [fill=black] (-0.4,-0.8) circle (1pt);
\draw [fill=black] (-0.6,-1.2) circle (1pt);
\draw [fill=black] (-0.2,-1.2) circle (1pt);
\draw [fill=black] (0,-1) circle (1pt);
\draw [fill=black] (-0.4,-0.4) circle (1pt);
\draw [fill=black] (-0.2,-0.6) circle (1pt);
\draw [fill=black] (-1.6,-1.6) circle (1pt);
\draw [fill=black] (-2.2,-1) circle (1pt);
\draw [fill=black] (-2.2,-0.8) circle (1pt);
\draw [fill=black] (-2.6,-0.6) circle (1pt);
\draw [fill=black] (-1.4,-1.8) circle (1pt);
\draw [fill=black] (-1.2,-1.6) circle (1pt);
\draw [fill=black] (-0.8,-1.6) circle (1pt);
\draw [fill=black] (-1,-1.8) circle (1pt);
\draw [fill=black] (-0.6,-1.8) circle (1pt);
\draw [fill=black] (2.3,0.5) circle (0.5pt);
\draw [fill=black] (2.5,0.5) circle (0.5pt);
\draw [fill=black] (2.7,0.5) circle (0.5pt);
\draw [fill=black] (-5.7,0.5) circle (0.5pt);
\draw [fill=black] (-5.5,0.5) circle (0.5pt);
\draw [fill=black] (-5.3,0.5) circle (0.5pt);
\draw [fill=black] (-1.5,4.7) circle (0.5pt);
\draw [fill=black] (-1.5,4.5) circle (0.5pt);
\draw [fill=black] (-1.5,4.3) circle (0.5pt);
\draw [fill=black] (-1.5,-3.7) circle (0.5pt);
\draw [fill=black] (-1.5,-3.5) circle (0.5pt);
\draw [fill=black] (-1.5,-3.3) circle (0.5pt);
\end{scriptsize}
\end{tikzpicture}
\caption{An illustration of the proof of Theorem~\ref{thm G infinity}~\eqref{item local limit}. The smaller ball is $B(u_M, (\log n)^C)$ and the bigger ball is $B(u_0, 2(\log n)^C)$. The tree contained in the smaller ball is $T_{u_M}$.}
\label{fig 3}
\end{figure}
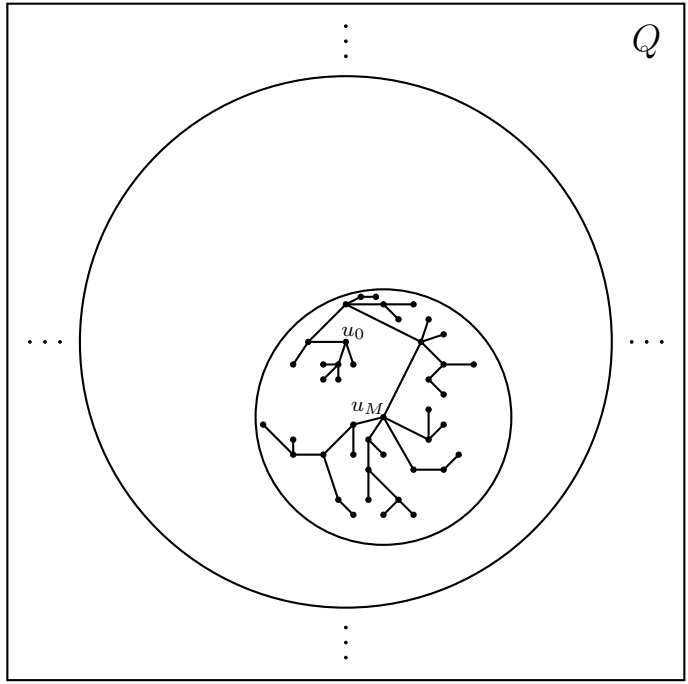

For the proof of Theorem~\ref{thm G infinity}\eqref{item loc finite}, we recall the notion of cubic net defined in Section~\ref{subsec cubic nets}.

\begin{proof}[Proof of Theorem~\ref{thm G infinity}\eqref{item loc finite}]
Fix a vertex $v$ with position $p = p(v)\in \mathbb R^d$ and arrival time $x = x(v)\in (0,1]$ (note that there is a.s.\ no vertex arriving at time $0$). 
Fix a sequence of real numbers $(\ell_i)_{i\ge 1}$ tending to infinity so that the interiors of all boundary cubes in all cubic nets $(N(p, \ell_i))_{i\ge 1}$ are disjoint. 
For all $i\ge 1$, denote by $\mathcal A_i$ the event that each boundary cube in $N(p, \ell_i)$ contains a vertex with arrival time in the interval $(0, x_v)$. Since $\lim_{i\to \infty} |[-\ell_i, \ell_i]^d|/K^d\to \infty$, we have that $\mathbb P(\mathcal A_i) \longrightarrow 1$ as $i\to \infty$. Thus,
\begin{equation*}
\mathbb P(\deg_{G_{\infty}}(v) = \infty) = \mathbb P([p-1/2, p+1/2]^d\cap V(G_{\infty}) = \infty) + \mathbb P(\cap_{i\ge 1} \overline{\mathcal A_i})\le 0 + \lim_{i\to \infty} \mathbb P(\overline{\mathcal A_i}) = 0.
\end{equation*}
We conclude that
\begin{align*}
&\mathbb P(\exists v\in V(G_{\infty}): \deg_{G_{\infty}}(v) = \infty)\\ 
=\hspace{0.3em} 
&\mathbb P(\exists v\in V(G_{\infty}): \deg_{G_{\infty}}(v) = \infty\mid |G_{\infty}| \text{ is countable}) + \mathbb P(|G_{\infty}| \text{ is not countable})\\
\le\hspace{0.3em} 
&0 + \mathbb P\left(\exists (y_i)_{i=1}^d\in \mathbb Z^d: \prod_{i=1}^d [y_i-1, y_i+1]\cap V(G_{\infty}) = \infty\right) = 0,
\end{align*}
and this proves Theorem~\ref{thm G infinity}\eqref{item loc finite}.
\end{proof}

To prove Theorem~\ref{thm G infinity}\eqref{item recurrence}, we need some preparation.

\begin{observation}\label{ob containment}
Fix a real number $r\in (0,1]$ and set $\varepsilon = (512 d^2)^{-2d}$ and $M = 16 d^2 r^{-1/d}$. 
Fix a $d$-dimensional cube $Q\subseteq \mathbb R^d$ of side length $M$. 
Then, $Q$ satisfies the following two conditions with probability at least 
$1 - (8d^2)^{-d}$: 
it contains no vertex of $G_{\infty}$ with arrival time in the interval $[(1-\varepsilon)r, (1+\varepsilon)r]$, and it contains a vertex with arrival time in the interval $(0, (1-\varepsilon)r)$.
\end{observation}
\begin{proof}
On the one hand, the number of vertices in $Q$ with arrival time in the interval $[(1-\varepsilon)r, (1+\varepsilon)r]$ is a Poisson random variable with mean $2\varepsilon r M^d$. 
Hence, this number is at least 1 with probability
\begin{equation*}
1 - \exp(-2\varepsilon r M^d)\le 1 - \exp(-2(32d^2)^{-d})\le 2(32 d^2)^{-d},
\end{equation*}
where in the last step we used the inequality $1\le t + \exp(-t)$ for all $t\ge 0$. 
Moreover, there are no vertices in $Q$ with arrival time in the interval $(0, (1-\varepsilon)r)$ with probability
\begin{equation*}
\exp(-(1-\varepsilon)r M^d)\le \exp(-(16 d^2)^d/2)\le (16 d^2)^{-d},
\end{equation*}
where at the last step we used that $\exp(-t/2)\le t^{-1}$ for all $t\ge 16$. The observation follows by a union bound.
\end{proof}

\begin{lemma}\label{lem finite desc trees}
The descending tree of any vertex in $G_{\infty}$ is finite a.s.
\end{lemma}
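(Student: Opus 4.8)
The plan is to reduce the statement to the absence of an infinite self-avoiding path in a subcritical percolation on the transitive graph $\mathbb Z^d$ (more precisely on a renormalized lattice whose sites are cubes of a fixed tessellation $\mathcal T$). Fix a vertex $v$ with arrival time $x_v\in(0,1]$. The descending tree $T_v$ of $v$ is infinite if and only if there is an infinite path $v=w_0, w_1, w_2,\dots$ in $G_\infty$ with $X_{w_0}>X_{w_1}>X_{w_2}>\cdots$ (each $w_{i+1}$ a child of $w_i$); since the arrival times along this path form a decreasing sequence in $(0,1]$, they converge to some limit $x_\infty\ge 0$. The first step is to show that a.s.\ $x_\infty=0$: indeed, conditionally on the positions of all vertices, each child is the earliest-arriving vertex in a ball that also contains the parent, so along any descending path the arrival time is at least halved infinitely often with probability $1$ (more carefully, one couples the construction with i.i.d.\ uniforms as in the paragraph above Lemma~\ref{lem 1 step}, so that $X_{w_{i+1}}\le X_{w_i}U_{i+1}$ and $U_1U_2\cdots\to 0$ a.s.). Hence an infinite descending tree would contain, for every $r\in(0,1]$, a vertex with arrival time in $(0,r)$ reachable by a descending path from $v$, and moreover the Euclidean jumps of that path get smaller and smaller — this is where the localization estimate of Corollary~\ref{cor PoNN} (equivalently Theorem~\ref{thm 4.1}, transplanted to the infinite PPP via the coupling with a large torus as in the proof of Theorem~\ref{thm G infinity}\eqref{item local limit}) will be used to control the geometric scale at each level $r=2^{-j}$.

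Next I would set up the renormalization. Fix a geometric sequence of scales $r_j = 2^{-j}$ (or some fixed ratio $\lambda<1$), and for each $j$ let $\mathcal T_j$ be the tessellation of $\mathbb R^d$ into cubes of side length $M_j := 16 d^2 r_j^{-1/d}$ as in Observation~\ref{ob containment}. Declare a cube $Q\in\mathcal T_j$ \emph{open} if it fails the good event of Observation~\ref{ob containment} at scale $r=r_j$, i.e.\ either $Q$ contains a vertex with arrival time in the "forbidden annulus" $[(1-\varepsilon)r_j,(1+\varepsilon)r_j]$, or $Q$ contains no vertex with arrival time in $(0,(1-\varepsilon)r_j)$. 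By Observation~\ref{ob containment}, $\mathbb P(Q \text{ open})\le (8d^2)^{-d}$; one checks this is below the site-percolation threshold of $\mathbb Z^d$ for $d\ge 1$ (for $d=1$ one argues directly with a Borel--Cantelli / gap argument instead, since there is no percolation threshold, but the forbidden-annulus events at geometrically separated scales are "almost independent" enough, or one simply uses that the expected number of open cubes within any fixed distance decays). The point of the forbidden annulus is the standard renormalization mechanism: if a descending path starts in some cube $Q\in\mathcal T_j$ and its subsequent edges are short (by Corollary~\ref{cor PoNN}), then to reach an arrival time below $(1-\varepsilon)r_{j}$ the path must, at the coarser scale, either leave $Q$ through a neighbouring cube that is itself "closed" in a way that propagates, or hit a forbidden-annulus vertex. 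Thus a bi-infinite or semi-infinite descending path forces an infinite sequence of closed-to-open transitions, which one packages as an infinite connected cluster in a product of subcritical site percolations across the scales $j$.

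The cleanest way to finish: show that, conditioned on the descending tree being infinite, one can extract an infinite nearest-neighbour path in the renormalized lattice $\mathbb Z^d$ (at a suitable single scale, or along a diagonal through scales) consisting entirely of open cubes, using the localization bound of Corollary~\ref{cor PoNN} to guarantee that consecutive levels of the descending path live in cubes that are lattice-neighbours in $\mathcal T_j$; then invoke the classical fact that subcritical Bernoulli site percolation on $\mathbb Z^d$ has no infinite cluster a.s.\ to conclude $\mathbb P(|T_v|=\infty)=0$. Finally, since $G_\infty$ has countably many vertices a.s.\ (it is a locally finite point set by Theorem~\ref{thm G infinity}\eqref{item loc finite}, or directly: a PPP in $\mathbb R^d$ is a.s.\ countable), a union bound over vertices gives that a.s.\ \emph{every} descending tree is finite. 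The main obstacle I expect is making the "forces an infinite open cluster" step rigorous while keeping the events at different scales genuinely under control — the arrival-time annuli at scales $r_j$ are defined on overlapping regions of space, so one must either choose the ratio $\lambda$ small enough that the relevant spatial regions are disjoint (using that the localization radius $S(d,n,j)\sim ((\log)^{d+1} r_j^{-1})^{1/d}$ is much smaller than the cube side $M_j\sim r_j^{-1/d}$), or argue with conditional probabilities and a careful exploration that reveals vertices scale by scale, exactly as in the exploration-process bookkeeping used in the proof of Theorem~\ref{thm diam poisson}.
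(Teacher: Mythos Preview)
There is a fundamental direction error that derails the argument. In the online NN model each vertex connects to its nearest \emph{earlier}-arriving neighbour, so children have \emph{larger} arrival times than their parents. An infinite path in the descending tree of $v$ therefore has $X_{w_0} < X_{w_1} < X_{w_2} < \cdots$, an \emph{increasing} sequence bounded by $1$, and its limit $x_\infty$ lies in $(X_v,1]$, not at $0$. The coupling $X_{w_{i+1}}\le X_{w_i}U_{i+1}$ you invoke from above Lemma~\ref{lem 1 step} describes the \emph{ascending} path (each step goes to the parent, whose arrival time is uniform on $[0,X_{w_i}]$); it says nothing about a child's arrival time, which can be arbitrarily close to the parent's. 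And the ascending path is always infinite in $G_\infty$ (there is no root), so proving its arrival times tend to $0$ is both true and irrelevant to the lemma.

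Because of this, the multi-scale scheme with $r_j=2^{-j}$ is built on a false premise: the descending path does not sweep through all scales but hovers near the single limit scale $x_\infty$. The paper exploits precisely this: fix a rational $r$ close to $x_\infty$ and work at that \emph{one} scale. Eventually every $w_i$ has arrival time in $[(1-\varepsilon)r,(1+\varepsilon)r]$ and hence sits in a ``blue'' cube (one failing Observation~\ref{ob containment} for that fixed $r$). A short geometric argument then forces $w_{i+1}$ to lie within $d_{\mathcal T}$-distance $d^2$ of $w_i$'s blue cube, so the tail of the path is trapped in a single connected component of blue cubes in the $d^2$-th power of the lattice; this component is a.s.\ finite because the blue probability $(8d^2)^{-d}$ is below the inverse degree $(4d^2)^{-d}$ of that power graph (a branching-process comparison, no need for the site-percolation threshold). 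The path is then confined to a bounded region of $\mathbb R^d$, forcing a spatial accumulation point, which has probability $0$; a union bound over rational $r$ finishes. Your instinct to use Observation~\ref{ob containment} and subcritical percolation is right, but the renormalization must be at the single limit scale, not across a geometric hierarchy, and Corollary~\ref{cor PoNN} plays no role here.
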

\begin{proof}
We argue by contradiction. 
Fix a rational number $r\in (0,1]$ and define $M$ and $\varepsilon$ as in Observation~\ref{ob containment}. 
Denote by $\mathcal A_r$ the event that there is an infinite descending path $(u_i)_{i\ge 0}$ in $G_{\infty}$ such that $\lim_{i\to \infty} X_{u_i}\in [r-\varepsilon/2, r+\varepsilon/2]$ 
(note that the arrival times of $u_i$ decrease with $i$ and thus the limit is well defined).
Note that since $G_{\infty}$ is a.s.\ locally finite by Theorem~\ref{thm G infinity}\eqref{item loc finite}, it is sufficient to show that, for all rational $r\in (0,1]$, the event $\mathcal A_r$ has probability 0.

Fix a tessellation $\mathcal T$ of $\mathbb R^d$ into axis-parallel cubes of side length $M$ and color in blue the $d$-cubes of $\mathcal T$ (the vertices therein are also colored in blue) that do not satisfy the conditions from Observation~\ref{ob containment}. 
Thus, all cubes are colored in blue independently and with the same probability. We equip $\mathcal T$ with the following distance: 
for any cubes $Q_1,Q_2\in \mathcal T$ with centers $q_1,q_2$, set $d_{\mathcal T}(Q_1,Q_2)$ to be the $\ell^1$-distance between $q_1$ and $q_2$. 
Note that a vertex $u_i$ with arrival time in $[(1-\varepsilon)r, (1+\varepsilon)r]$ must belong to a blue cube $Q$, and if there are no other blue cubes at distance at most $d^2$ from $Q$ (with respect to $d_{\mathcal T}$), then $u_{i+1}$ must be positioned in the $\ell^1$-ball $B_{\mathcal T}(Q, d^2)$ for $d_{\mathcal T}$: 
indeed, fix any point $p_1$ outside $B_{\mathcal T}(Q,d^2)$, any point $p_2\in Q$ and set $p_4$ as the intersection point of the segment $p_1p_2$ with the boundary of $B_{\mathcal T}(Q,d^2)$. 
Also, let $p_3$ by an arbitrary point in some cube of $\mathcal T$, containing $p_4$ on its boundary (see Figure~\ref{fig 4}). Then, by the fact that every point on the boundary of $B_{\mathcal T}(Q,d^2)$ is at $\ell^2$-distance more than $Md^2/d = Md$ from $Q$ and the triangle inequality, we have
\begin{equation*}
|p_1p_2| = |p_1p_4|+|p_4p_2| > |p_1p_4| + Md\ge |p_1p_4|+|p_4p_3|\ge |p_1p_3|.
\end{equation*}

\begin{figure}
\centering
\begin{tikzpicture}[line cap=round,line join=round,x=1cm,y=1cm]
\clip(-5.3,-7.02) rectangle (16,2.02);
\draw [line width=0.8pt] (3,2)-- (3,-7);
\draw [line width=0.8pt] (3,-7)-- (4,-7);
\draw [line width=0.8pt] (4,-7)-- (4,2);
\draw [line width=0.8pt] (4,2)-- (3,2);
\draw [line width=0.8pt] (2,1)-- (2,-6);
\draw [line width=0.8pt] (2,-6)-- (5,-6);
\draw [line width=0.8pt] (5,-6)-- (5,1);
\draw [line width=0.8pt] (5,1)-- (2,1);
\draw [line width=0.8pt] (1,0)-- (1,-5);
\draw [line width=0.8pt] (1,-5)-- (6,-5);
\draw [line width=0.8pt] (6,-5)-- (6,0);
\draw [line width=0.8pt] (6,0)-- (1,0);
\draw [line width=0.8pt] (0,-1)-- (0,-4);
\draw [line width=0.8pt] (0,-4)-- (7,-4);
\draw [line width=0.8pt] (7,-4)-- (7,-1);
\draw [line width=0.8pt] (7,-1)-- (0,-1);
\draw [line width=0.8pt] (-1,-2)-- (-1,-3);
\draw [line width=0.8pt] (-1,-3)-- (8,-3);
\draw [line width=0.8pt] (8,-3)-- (8,-2);
\draw [line width=0.8pt] (8,-2)-- (-1,-2);
\draw [line width=0.8pt] (3.66,-2.64)-- (7.7,-5.56);
\draw [line width=0.8pt] (5.38,-4.48)-- (6,-4.331287128712872);
\draw [line width=0.8pt] (5.38,-4.48)-- (7.7,-5.56);
\begin{scriptsize}
\draw [fill=black] (3.66,-2.64) circle (1pt);
\draw[color=black] (3.6,-2.45) node {\large{$p_2$}};
\draw [fill=black] (7.7,-5.56) circle (1pt);
\draw[color=black] (8,-5.5) node {\large{$p_1$}};
\draw [fill=black] (6,-4.331287128712872) circle (1pt);
\draw[color=black] (6.3,-4.2) node {\large{$p_4$}};
\draw [fill=black] (5.38,-4.48) circle (1pt);
\draw[color=black] (5.3,-4.3) node {\large{$p_3$}};
\end{scriptsize}
\end{tikzpicture}
\caption{A representation of the geometric part of the proof of Lemma~\ref{lem finite desc trees} in the case $d = 2$.}
\label{fig 4}
\end{figure}
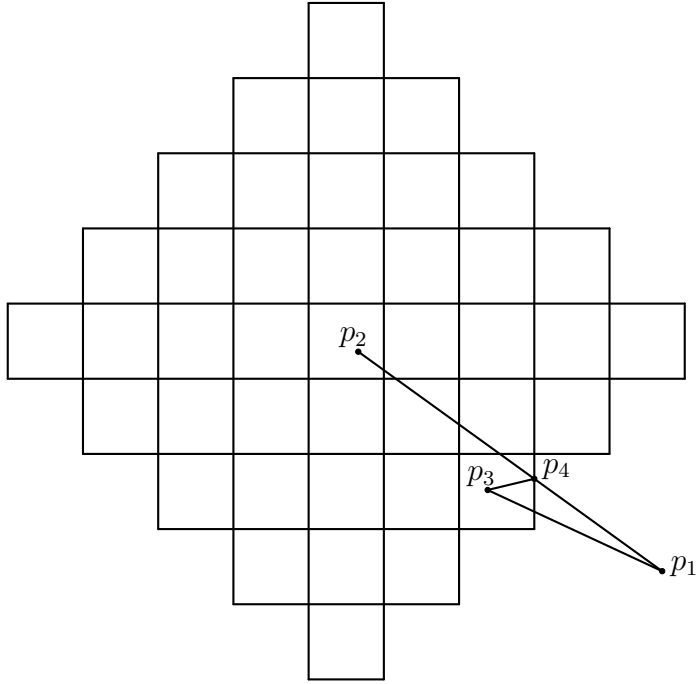

Now, consider the clusters (that is, connected components) formed by the set of cubes at $\ell^1$-distance at most $d^2$ from a blue cube. We show that a.s.\ each of these clusters is finite.
Let $H$ be the graph with vertices at the centers of the cubes in $\mathcal T$ and edges given by the pairs of vertices $u,v\in H$ such that the $\ell^1$-distance between them is at most $Md^2$ and at least one of them is a center of a blue cube.
Thus, every vertex has less than $(d^2+1+d^2)^{d} < (4d^2)^d$ neighbors. 
However, by Observation~\ref{ob containment}, a vertex is blue with probability at most $(8d^2)^{-d}$. Since $(4d^2)^d\cdot (8d^2)^{-d} < 1$, a comparison with a branching process shows that the component of every vertex in $H$ is a.s.\ finite. 
Hence, on this a.s.\ event, the sequence $(u_i)_{i\ge 0}$ may intersect only finitely many blue cubes: indeed, blue cubes whose centers are not connected by an edge in $H$ cannot contain consecutive terms of the sequence which both have arrival times in the interval in $[(1-\varepsilon)r, (1+\varepsilon)r]$. 
This means that $(u_i)_{i\ge 0}$ must contain an accumulation point, which happens with probability 0. 
Thus, for any rational number $r\in [0,1]$, the event $\mathcal A_r$ has probability 0. 
Since the intervals $[(1-\varepsilon/2)r, (1+\varepsilon/2)r]_{r\in \mathbb Q\cap (0,1]}$ cover the interval $(0,1]$ itself, we have
\begin{equation*}
\mathbb P(\text{there is an infinite descending path $(u_i)_{i\ge 0}$ in $G_{\infty}$}) \le \mathbb P(\cup_{r\in \mathbb Q\cap (0,1]} \mathcal A_r) \le \sum_{r\in \mathbb Q\cap (0,1]} \mathbb P( \mathcal A_r) = 0,
\end{equation*}
and the lemma is proved.
\end{proof}

\begin{proof}[Proof of Theorem~\ref{thm G infinity}\eqref{item recurrence}]
Fix an arbitrary vertex $v_0\in G_{\infty}$ and let $(v_i)_{i\ge 0}$ be a sequence of vertices of $G_{\infty}$ such that $v_i$ is the parent of $v_{i-1}$ for all $i\ge 1$. 
Then, by Lemma~\ref{lem finite desc trees}, the connected component of $v_0$ is a.s.\ isomorphic to a copy of $\mathbb N_0$ with a finite tree attached to every node $i\in \mathbb N_0$ (where $i\in \mathbb N_0$ corresponds to $v_i$). 
The last graph is recurrent for the simple random walk since $\mathbb N_0$ is itself recurrent.
\end{proof}

Before moving on with the proof of Theorem~\ref{thm G infinity}\eqref{item connected} we note that, since the process we consider can be represented as a marked Poisson Point Process (with marks representing arrival times), by the adaptation of Proposition~8.13 of~\cite{Last} to marked Poisson Point Processes (see also Exercise~10.1 of~\cite{Last}), our process is ergodic.
In particular, for all $k\in \mathbb N\cup \{\infty\}$, $G_{\infty}$ has $k$ connected components with probability 0 or 1.

Now, we show the more precise result that a.s.\ the number of components is $1$. 
Let us introduce some notation. 
To begin with, attribute a different color $c_v$ to every vertex $v\in V(G_{\infty})$. 
Consider a sequence of independent Poisson Point Processes $(\mathcal {P}_i)_{i\ge 0}$ with intensity 1, with $\mathcal P_0 = V(G_{\infty})$ (vertices arrive at the same time as in $G_\infty$),
and for every $i\ge 1$, equip the points in $\mathcal P_i$ with i.i.d.\ arrival times distributed uniformly in the interval $(i, i+1]$. 
Then, for every $i\ge 1$, connect every vertex $w$ in $\mathcal {P}_i$ with its (almost surely unique) nearest neighbor with smaller arrival time $\pnt(w)$ (which is in $\mathcal P_0\cup \ldots \cup \mathcal P_i$), 
and let $w$ adopt the color of $\pnt(w)$.
Finally, for every vertex $v\in V(G_{\infty})$, define $R_v$ as the closure of the set of points in $\mathbb R^d$ in color $c_v$, and define the boundary set as
\[D = \{x\in \mathbb R^d: \exists u,v\in V(G_{\infty}), x\in R_u\cap R_v\}.\]
We remark that $D$ is a closed set.

The following main result of~\cite{BBCS23} is key in our proof. 
We note that the original result was stated for two colors and in finite volume (more precisely, in the square $[0,1]^d$).
Both restrictions are not problematic: To go beyond the first restriction, one may consider all pairs of two different colors separately as the boundary in the multi-color case is included in the union of the boundaries between all pairs of colors in the two-color case.  
Moreover, for the second restriction, we may restrict our attention to sufficiently large squares.

\begin{theorem}[see Theorem~1 in~\cite{BBCS23}]\label{thm Curien}
The Hausdorff dimension of $D$ is a.s.\ in the interval $(d-1, d)$. In particular, its Lebesgue measure is a.s.\ $0$.
\end{theorem}

Now, we are ready to prove the connectivity of $G_{\infty}$.

\begin{proof}[Proof of Theorem~\ref{thm G infinity}\eqref{item connected}]
From Theorem~\ref{thm Curien} we know that the origin is a.s.\ contained in the open set $\mathbb R^d\setminus D$. In particular, for every $\varepsilon > 0$ there is a real number $r = r(\varepsilon) > 0$ such that the ball $B(0, r)$ is contained in $\mathbb R^d\setminus D$ with probability $1-\varepsilon$.

Fix a real number $M > 0$. 
We will show that a.s.\ all vertices at distance at most $M$ from the origin have a common ancestor in $G_{\infty}$.
For every $n\ge 0$, consider the random partition of $\mathbb R^d$ where every point $v \in \cup_{i\ge 0} \mathcal P_i$ receives color $c_v$, if $x_v\le 1/n$, and otherwise receives the color of its nearest ancestor (in terms of graph distance) with arrival time in $(0, 1/n]$. Now, for each $n$, consider a homothety of $\mathbb R^d$ centered at the origin and with factor $n^{-1/d}$ and let $S_n$ be the a.s.\ unique monochromatic region containing the origin in this rescaled partition.  
In particular, crucially, $(S_n)_{n\ge 1}$ are random closed sets with the same distribution.

Now, fix $n\ge (M/r)^d$. 
Then, the homothety with factor $n^{-1/d}$ is such that the original non-rescaled ball $B(0,M)$ is contained in the rescaled ball $B(0, r)$,  and by our choice of $r$ we know that $S_n$ contains $B(0, r)$ in its interior with probability at least $1-\varepsilon$.
As a consequence, all vertices of $G_{\infty}$ in the (non-rescaled) ball $B(0,M)$ have a common ancestor with arrival time in the interval $(0, (r/M)^d]$ with probability at least $1-\varepsilon$. 
Since this holds for every $\varepsilon > 0$, we have that a.s.\ all points in $B(0,M)$ are in the same connected component of $G_{\infty}$. 
At the same time, this is true for every $M$, which finishes the proof by taking a union bound over all integer $M > 0$.
\end{proof}

\section{\texorpdfstring{Leaves and cherries in $G_n$}{}}\label{sec trees}
This section is dedicated to the proofs of 
Corollary~\ref{cor:cherries} and Theorem~\ref{thm leaves}.

\subsection{Expected number of cherries}\label{subs cherries}

As an application of Theorem~\ref{thm G infinity}\eqref{item local limit}, we compute the expected number of cherries (that is, paths of length two rooted at their central vertex) in the 1-NN tree and in the uniform attachment tree (intuitively corresponding to the case $d = \infty$, as mentioned in the introduction). We note that, in theory, the method of computation may be generalized to other subtrees and higher dimensions. Let us point out that, albeit the expectation can still be expressed via a finite number of integrals, the integral expressions become complicated to compute explicitly (even numerically).

In this subsection, we denote by $Ch^1_n$ the number of cherries in the 1-NN tree where the central vertex has smallest arrival time, and by $Ch^{\infty}_n$ the number of cherries in the uniform attachment tree with the same property. We note that the number of cherries where the earliest arrived vertex is an endpoint is $n - \deg_{G_n}(0)$, which is $n-o(n)$ both a.a.s.\ and in expectation.

\begin{lemma}\label{lem cherries d=1}
$\mathbb E Ch^1_n = \dfrac{(3-2\log 2 + o(1))n}{12} \approx 0.134 n$ and $\mathbb E Ch^{\infty}_n = n - o(n)$.
\end{lemma}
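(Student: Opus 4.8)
The plan is to compute the two constants separately, the case $d=\infty$ being essentially immediate and the case $d=1$ requiring an explicit evaluation of the integral~\eqref{eq UB prob} for the cherry $\vec{T}$.

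First I would dispose of $\mathbb E Ch^{\infty}_n$. In the uniform attachment tree every vertex $v$ other than the root is the centre of exactly $\deg(v)-1$ oriented paths of length $2$, hence $Ch^{\infty}_n = \sum_{v\neq \mathrm{root}} (\deg(v)-1) = (n-1) - \deg(\mathrm{root})$ (using $\sum_v \deg(v) = 2(n-1)$ and that the root contributes its whole degree to outgoing-from-leaves paths, i.e.\ none to the count as a centre); more simply, the number of oriented paths $u \to w \to \bullet$ with the middle vertex distinct from the root equals $n-1$ minus the root degree, and each unoriented cherry is counted once this way. Since the root degree in the uniform attachment tree is $O(\log n)$ in expectation (this is classical; see~\cite{DL}), we get $\mathbb E Ch^{\infty}_n = n - o(n)$.

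For $\mathbb E Ch^1_n$ I would apply the machinery of Subsection~\ref{subs convergence}: the cherry is the rooted tree $\vec{T}$ on three vertices with the centre as root and two children. By Theorem~\ref{thm 7.1} and the paragraph preceding its proof, $n^{-1}\mathbb E Ch^1_n \to \sum_{\ell\in\Lambda(\vec T)} C_{\vec T_\ell}$, where each $C_{\vec T_\ell}$ is the value of~\eqref{eq UB prob}. For the cherry there is a unique good labelling up to the symmetry swapping the two children (the root gets the smallest arrival time; the two children are interchangeable), so it suffices to evaluate one integral and account for the factor coming from the two children. Fixing $u_0$ at the origin with arrival time $x_0$, the two other vertices $u_1,u_2$ have positions $p_1,p_2\in\mathbb R$ and arrival times $x_0<x_1<x_2$. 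The event $\mathcal A$ forces $u_1$ and $u_2$ each to be closer to $u_0$ than to the other already-arrived vertices among $\{u_0\}$ (trivial for $u_1$) resp.\ $\{u_0,u_1\}$ (for $u_2$: $|p_2| < |p_2-p_1|$); the event $\mathcal B$ contributes, for each $i$, the factor $\exp(-x_i \cdot 2|p_i|)$ coming from the probability that the PPP of rate $1$ puts no earlier-arrived point in the ball $B(p_i,|p_i|)$ of length $2|p_i|$ (in $d=1$). Thus, after integrating out $x_0\in[0,x_1]$ and then $x_1<x_2$, one is left with a concrete finite-dimensional integral over $p_1,p_2\in\mathbb R$ with an exponential-in-$|p_1|,|p_2|$ weight and the geometric constraint $|p_2|<|p_2-p_1|$; I would carry out these integrations (switching to the variables $r_1=|p_1|$, $r_2=|p_2|$, handling signs by symmetry, and integrating the arrival times first since they appear only through simple exponentials) to arrive at the claimed value $\tfrac{3-2\log2}{12}$. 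The numerical check $\tfrac{3-2\log 2}{12}\approx 0.134$ is routine.

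The main obstacle is the explicit evaluation of the cherry integral in $d=1$: one must set up the geometry correctly (the constraint $|p_2|<|p_2-p_1|$ splits $\mathbb R^2$ into regions depending on the signs and relative sizes of $p_1,p_2$), correctly identify the $\mathcal B$-factor as $\exp(-x_i\cdot 2|p_i|)$ for each child (and verify the two balls' contributions multiply, since conditionally on positions and arrival times the relevant emptiness events concern disjoint time-windows of the PPP), and then push through the iterated integral without arithmetic slips — the appearance of $\log 2$ signals a $\int \frac{dr}{1+r}$-type step, which is where the constraint between the two radii enters. Everything else (the passage from $G_n$ to $\oPoNN$, finiteness of the limiting integral, linearity of expectation giving the leading constant) is already supplied by Observation~\ref{ob 7.2}, Remark~\ref{rem 7.3}, the discussion around~\eqref{eq UB prob}, and Theorem~\ref{thm 7.1}.
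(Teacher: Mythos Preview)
Your overall framework for $Ch^1_n$ (use the integral~\eqref{eq UB prob} for the cherry, with $u_0$ at the origin and the two children at $p_1,p_2$ with ordered arrival times) is the same as the paper's, but there is a genuine error in your $\mathcal B$-factor. You write $\mathbb P(\mathcal B\mid\ldots)=\exp(-2x_1|p_1|)\exp(-2x_2|p_2|)$ and justify this by saying ``the relevant emptiness events concern disjoint time-windows of the PPP''. This is false: the time windows are $[0,x_1]$ and $[0,x_2]$, which are \emph{nested}, not disjoint, and when $p_1,p_2$ have the same sign the spatial balls $B(p_1,|p_1|)$ and $B(p_2,|p_2|)$ overlap as well (for $p_1>0$ and $0<p_2<p_1/2$ they are the intervals $(0,2p_1)\supset(0,2p_2)$). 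The correct joint probability is $\exp(-|R_1\cup R_2|)$ where $R_i=B(p_i,|p_i|)\times[0,x_i]$ in space--time; on the region $p_1>0$, $0<p_2<p_1/2$ this gives the exponent $-2x_1p_1-2p_2(x_2-x_1)$, not your $-2x_1p_1-2x_2p_2$. This is exactly what the paper's term $\exp(-2x_1(p_1-\max(p_2,0)))$ encodes. With your product formula the same-side contribution would come out wrong (one gets a $\log 3$ where the overlap-corrected integral produces a $\log 2$), so you would not land on the stated constant.

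Separately, your argument for $Ch^\infty_n$ is confused. What you compute, $\sum_{v\neq\mathrm{root}}(\deg(v)-1)=(n-1)-\deg(\mathrm{root})$, counts oriented paths $u\to v\to a(v)$ (``grandparent paths''), not cherries in the sense of the lemma (a vertex with two children). Your sentence ``each unoriented cherry is counted once this way'' is simply false: a $\vee$-shaped cherry, two siblings with a common parent, is never of the form $u\to v\to a(v)$. The paper instead computes directly $\mathbb E Ch^\infty_n=\sum_{0\le i<j<k\le n-1}\mathbb P(j\to i,\,k\to i)=\sum_{i<j<k}\tfrac{1}{jk}$ and evaluates the sum. (It so happens that both quantities are $n-O(\log n)$ in the UA tree, which is why your wrong formula gives the right asymptotic, but the argument as written does not establish it.)
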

\begin{proof}
First, a direct computation shows that
\begin{equation*}
\mathbb E Ch^{\infty}_n =\!\!\!\!\! \sum_{0\le i < j < k\le n-1} \mathbb P(j,k \text{ attach to } i) = \sum_{0\le i < j < k\le n-1} \dfrac{1}{jk} = \sum_{1\le j < k\le n-1} \dfrac{1}{k} = \sum_{2\le k\le n-1} \dfrac{k-1}{k} = n - O(\log n).
\end{equation*}

Now, we turn to $\mathbb E Ch^{1}_n$. By Theorem~\ref{thm G infinity}\eqref{item local limit}, it suffices to compute the number of cherries rooted at a vertex located at $p_0 = 0$ in the PPP(1) on the real line.

Let $x_0 < x_1 < x_2$ be the arrival times of the vertices in the cherry, and let $p_1, p_2$ be the positions of the second and the third vertex. 
Then, either the second and the third vertices lie both to the left or both to the right of the root, in which one has $p_0 = 0 < p_2 < p_1 - p_2$ or $p_1 - p_2 < p_2 < p_0$ (which are symmetric and have the same probability), or $p_0$ is between $p_1$ and $p_2$.
We consider the two cases separately.

In the first case, suppose that $p_0 = 0 < p_2 < p_1 - p_2$. Then, conditionally on $x_2$, the position $p_2$ of the vertex with arrival time $x_2$ is distributed according to an exponential random variable $\mathcal E(x_2)$ (note that this is the closest vertex on the right of 0 with arrival time in the interval $[0, x_2]$). 
Moreover, since the vertex at 0 is its parent, the interval $[0,2p_2]$ must contain no vertex with arrival time in $[0, x_2]$. 
Conditionally on the above event, the vertex with arrival time $x_1$ arrives independently at a position $p_1$ distributed as $2p_2+\mathcal E(x_1)$. 
Again, the interval $[0,2p_1]\setminus [0, 2p_2]$ must contain no vertex with arrival time in $[0, x_1]$. 
Thus, the expectation in this case is
\begin{align}
2\int_{0\le x_0 < x_1 < x_2\le 1}\int_{p_2=0}^{\infty}\int_{p_1=2p_2}^{\infty} x_1 x_2 \exp(- 2x_2p_2-2x_1(p_1-p_2)) dp_1dp_2dx_0dx_1dx_2.\label{eq summand 1}
\end{align}
The integration over $p_1$ and $p_2$ in~\eqref{eq summand 1} gives\footnote{Integrating~\eqref{eq summand 1} over $p_1, p_2$: \tiny{\url{https://www.wolframalpha.com/input?i=integrate+2+x_1+x_2+exp\%28-+2x_2p_2-2x_1\%28p_1-p_2\%29\%29+for+p_1+from+2p_2+to+infinity\%2C+p_2+from+0+to+infinity\%2C+0+\%3C\%3D+x_0+\%3C+x_1+\%3C+x_2+\%3C\%3D+1}}}
\begin{align}
\int_{x_2=0}^1 \int_{x_0 = 0}^{x_2} \int_{x_1=x_0}^{x_2} \dfrac{x_2}{2(x_1+x_2)} dx_1dx_0dx_2,\label{eq:x-s}
\end{align}
which is equal\footnote{Integrating~\eqref{eq:x-s}: \tiny{\url{https://www.wolframalpha.com/input?i=integrate+\%28x_2\%2F\%282x_1\%2B2x_2\%29\%29+for+x_0+from+0+to+1\%2C+for+x_1+from+x_0+to+1\%2C+for+x_2+from+x_1+to+1}}} to $\tfrac{1}{6}(1-\log 2)$.
By a similar argument, the expectation in the second case yields
\begin{align}
& 2\int_{0\le x_0 < x_1 < x_2\le 1}\int_{p_1=0}^{\infty}\int_{p_2=-\infty}^{0} x_1 x_2\exp(-2x_1p_1 - 2x_2|p_2|) dp_1dp_2dx_0dx_1dx_2\nonumber\\
=\hspace{0.3em}
& 2\int_{0\le x_0 < x_1 < x_2\le 1}\int_{p_1=0}^{\infty}\int_{q_2=0}^{\infty} x_1 x_2 \exp(-2x_1p_1 - 2x_2q_2) dp_1dq_2dx_0dx_1dx_2\nonumber\\
=\hspace{0.3em}
& 2\int_{0\le x_0 < x_1 < x_2\le 1} \dfrac{1}{4} dx_0dx_1dx_2 = \dfrac{1}{12}.\label{eq summand 2}
\end{align}
The lemma is proved by summing~\eqref{eq:x-s} and~\eqref{eq summand 2}.
\end{proof}

\subsection{\texorpdfstring{Number of leaves in $G_n$}{}}\label{subs leaves}
In this subsection, we estimate the expected number of leaves of $G_n$. 
There are two notable differences with respect to the count of the number of cherries in the previous subsection. 
First, one needs to take into consideration not only the presence of edges but also of non-edges when counting leaves. 
This task is more difficult since simple integral expressions cannot be deduced as before. 
Second, we remark that these are the only bounds on local statistics in this paper which are independent of the dimension. 

\begin{theorem}\label{thm leaves 1}
For every $d\ge 1$, the expected number of leaves in the tree $G_n$ is bounded from above by $(1/2+o(1))n$.
\end{theorem}

We note that the uniform attachment tree has an expected number of $n/2$ leaves. This is well-known and easy to see:
\begin{equation*}
    \mathbb E[|\text{leaves in the UST on }n \text{ vertices}|] = \sum_{i=1}^{n-1} \prod_{j=i+1}^{n-1} \dfrac{j-1}{j} = \sum_{i=1}^{n-1} \dfrac{i}{n-1} = \dfrac{n}{2}.
\end{equation*}
Monte-Carlo simulations suggest that, for any $d\ge 1$, the number of leaves in $G_n$ concentrates around $C_d n$ where $C_d < 1/2$ but increases with $d$. 
Since the uniform attachment tree may intuitively be seen as the $\infty$-NN tree (as we mentioned in the introduction), we believe that $C_d\to 1/2$ as $d$ tends to infinity.

Let us prepare the ground for the proof of Theorem~\ref{thm leaves 1} with several preliminary results. We start with an easy observation.
Fix a function $\psi = \psi(n) = \omega(1)$. 

\begin{observation}\label{ob 5.10}
A.a.s.\ for every point $x\in \mathbb T^d_n$, the Voronoi cell of $x$ in $\{x\}\cup V(G_{\psi})$. 
\end{observation}
\begin{proof}
Fix a tessellation of $\mathbb T^d_n$ into cubes of side length less than $n^{1/d}/(4\sqrt{d})$. 
On the one hand, a.a.s.\ every cube contains at least one vertex of $G_{\psi}$. 
On the other hand, every point $y$ at distance at least $n^{1/d}/4$ from $x$ does not belong to the Voronoi cell of $x$ in $\{x\}\cup V(G_{\psi})$ since the cube that contains $y$ also contains a vertex at distance less than $\sqrt{d}\cdot n^{1/d}/(4\sqrt{d}) = n^{1/d}/4$ from $y$, as desired. 
\end{proof}

Denote by $V^j_i$ the Voronoi cell of $p(i)$ after embedding $[j-1]_0$ in $\mathbb T^d_n$, 
and denote $2V^j_i = \{x\in \mathbb T^d_n \mid  (x+p(i))/2\in V^j_i\}$. 
A configuration is \emph{non-degenerate} if the positions of no two vertices coincide.
\begin{lemma}\label{lem inclusion}
Fix a vertex $i$, a non-degenerate configuration on the first $j\ge i+1$ vertices embedded in $\mathbb T^d_n$, and suppose that $V^j_i\subseteq B(p(i),n^{1/d}/4)$. Let $\mu$ be the uniform random measure on $\mathbb T^d_n$ and $\mu_0$ be the uniform random measure on $\mathbb T^d_n\setminus V^j_i$. One may couple $\mu$ and $\mu_0$ so that the Voronoi cell $V^{j+1}_{i, \mu}$, obtained after sampling a vertex $u$ according to $\mu$, is contained in the Voronoi cell $V^{j+1}_{i, \mu_0}$, obtained after sampling a vertex $u_0$ according to $\mu_0$.
\end{lemma}
\begin{proof}
Note that since $V^j_i\subseteq B(p(i),n^{1/d}/4)$, $2V^j_i$ has volume $2^d|V^j_i|$. 
We couple $\mu$ and $\mu_0$ as follows. Let $U$ be a random variable with distribution $\text{Bernoulli}(|V^j_i|/n)$. 
If $U = 0$, sample $u$ uniformly at random in $\mathbb T^d_n\setminus V^j_i$ and set $u_0 = u$. 
If $U = 1$, then sample an additional random variable $U_0$ with distribution $\text{Bernoulli}\left(\frac{(2^d-1)|V^j_i|}{n-|V^j_i|}\right)$, independent of $U$. 
If $U_0 = 0$, sample $u$ uniformly at random in $V^j_i$ and $u_0$ uniformly at random in $\mathbb T^d_n\setminus 2V^j_i$, independently from $u$. 
In this case, $u_0$ does not alter $V^j_i$ at all. 
Otherwise, if $U_0 = 1$, we sample $u$ uniformly at random in $V^j_i$, and then construct $u_0$ as the unique point in $2V^j_i\setminus V^j_i$ that can be written as $u + 2^\ell(u-p(i))$ for some positive integer $\ell$ (note that if $u + 2^\ell(u-p(i))\in 2V^j_i\setminus V^j_i$, then $u + 2^{\ell+1}(u-p(i))$ is already outside $2V^j_i\setminus V^j_i$).
Indeed, to see that $u_0$ is uniformly distributed in $2V^j_i\setminus V^j_i$, it suffices to observe that the preimage of any open set $A$ in $2V^j_i\setminus V^j_i$ has $\mathrm{Unif}(V^j_i)$-measure 
\begin{equation*}
    \dfrac{1}{|V^j_i|} \sum_{i\ge 1} \dfrac{|A|}{2^{id}} = \dfrac{|A|}{(2^d-1)|V^j_i|} = \dfrac{|A|}{|2V^j_i\setminus V^j_i|},
\end{equation*}
where the first formula follows from the fact that the preimage of $A$ is obtained by homotheties with center $p(i)$ and coefficients $(2^i)_{i\ge 1}$ which contract its volume by a factor of $(2^{id})_{i\ge 1}$, respectively. This ensures that $u\sim \mu$ and $u_0\sim \mu_0$.

It remains to show that $V^{j+1}_{i, \mu}$ is contained in $V^{j+1}_{i, \mu_0}$. 
If $u$ does not fall into $V^j_i$, then the two Voronoi cells coincide. 
If $u$ falls into $V^j_i$ and $U_0=0$, 
then $V^j_i = V^{j+1}_{i, \mu_0}$ so $V^{j+1}_{i,\mu}\subseteq V^{j+1}_{i, \mu_0}$ since the Voronoi cell cannot increase by adding vertices. 
Finally, if $u$ falls into $V^j_i$ and $U_0=1$, we have that $u_0, u$ and $p(i)$ are collinear and are found on their common line in this order. 
For every point $x\in \mathbb T^d_n$, denote the bisecting hyperplane of $x$ and $i$ (consisting of the set of points at equal distance from $x$ and $i$) by $h_x$. 
Then, $h_u\cap V^j_i$ separates $i$ from $h_{u_0}\cap V^j_i$ in $V^j_i$, which means that once again $V^{j+1}_{i, \mu}\subseteq V^{j+1}_{i, \mu_0}$, which finishes the proof of the lemma.
\end{proof}
 
Fix an integer $i\in [n-2]_0$ and, for every $j\ge i+1$, let $\mathcal A_{i,j}$ be the event that vertex $j$ does not attach to $i$ via an edge. 
The next lemma uses Lemma~\ref{lem inclusion} iteratively to obtain a more general result.

\begin{lemma}\label{lem conditioning}
Fix vertices $i$, $j\in [i+1, n-1]$ and a non-degenerate configuration on the first $i+1$ vertices embedded in $\mathbb T^d_n$. 
Suppose that, for every point $x\in \mathbb T^d_n$, the Voronoi cell of $x$ in $\{x\}\cup [i]_0$ is contained in $B(x,n^{1/d}/4)$. 
Then, the Voronoi cell $V^j_i$ of $p(i)$ after embedding $i+1,\dots,j$ in $\mathbb T^d_n$ uniformly and independently may be coupled with the Voronoi cell $V^j_{i,0}$ of $p(i)$ after embedding $i+1,\dots,j$ in $\mathbb T^d_n$ uniformly and independently conditionally on $\cap_{k=i+1}^{j} \mathcal A_{i,k}$ so that $V^j_i\subseteq V^j_{i,0}$ a.s.
\end{lemma}
\begin{proof}
We prove the lemma by induction. The statement for $j = i+1$ is ensured by Lemma~\ref{lem inclusion}. 
Suppose that the induction hypothesis is satisfied for $j-1\ge i+1$: that is, there is a coupling ensuring that $V^{j-1}_i\subseteq V^{j-1}_{i,0}$. 
By applying Lemma~\ref{lem inclusion} for the configuration on $p(0), \ldots, p(j-1)$ conditioned on $\cap_{k=i+1}^{j-1} \mathcal A_{i,k}$ (which is a.s.\ non-degenerate), we conclude that one may couple
$V^j_{i,0}$ with $V^{j}_{i,1}$, defined as the Voronoi cell of $p(i)$ after embedding $[j]_0$ in $\mathbb T^d_n$ conditionally on $\cap_{k=i+1}^{j-1} \mathcal A_{i,k}$ only but not on $j\notin V^{j-1}_{i,0}$, 
so that $V^j_{i,1} \subseteq V^{j}_{i,0}$. 
Also, since a.s.\ $V^{j-1}_{i}\subseteq V^{j-1}_{i,0}$ by the induction hypothesis, we have that $V^{j}_i\subseteq V^{j}_{i,1}$: indeed, $V^{j-1}_i$ and $V^{j-1}_{i,0}$ have the same center $p(i)$, 
so after embedding $j$ uniformly in $\mathbb T^d_n$, both $V^{j}_i = V^{j-1}_i\cap H$ and $V^{j}_{i,1} = V^{j-1}_{i,0}\cap H$ for some half-space $H$, and so $V^{j}_i$ must be contained in $V^{j}_{i,1}$ a.s. 
We conclude that under the composition of the two couplings (the one given by the induction hypothesis and the one constructed using Lemma~\ref{lem inclusion}) $V^{j}_i\subseteq V^{j}_{i,1}\subseteq V^{j}_{i,0}$ a.s., which finishes the induction hypothesis and the proof of the lemma.
\end{proof}

\begin{corollary}\label{cor 6.3}
For every $j\ge i+1$ we have $\mathbb P(\overline{\mathcal A_{i,j}}\mid\cap_{k=i+1}^{j-1} \mathcal A_{i,k})\ge \mathbb P(\mathcal A_{i,j}) = \dfrac{1}{j}$.
\end{corollary}
\begin{proof}
By Lemma~\ref{lem conditioning}, the volume of the Voronoi cell $V^j_i$ of $i$ at time $j$ is positively correlated with $\mathds{1}_{\cap_{k=i+1}^{j-1} \mathcal A_{i,k}}$. The equality comes from the fact that the parent of $j$ is uniformly chosen among $[j-1]_0$.
\end{proof}

\begin{proof}[Proof of Theorem~\ref{thm leaves 1}]
Fix $\psi=\psi(n) = \omega(1)$ satisfying $\psi(n) = o(n)$ and condition on the a.a.s.\ event in Observation~\ref{ob 5.10}. Fix an integer $i\in [n-1], i\ge \psi$. By Corollary~\ref{cor 6.3},
\begin{equation*}
\mathbb P(i \text{ is a leaf in }G_n) = \mathbb P(\cap_{j=i+1}^{n-1} \mathcal A_{i,j}) = \mathbb P(A_{i,i+1}) \prod_{j=i+1}^{n-1} \mathbb P(\mathcal A_{i,j+1}\mid \cap_{k=i+1}^{j} \mathcal A_{i,k})\le \prod_{j=i+1}^{n-1} \dfrac{j-1}{j} = \dfrac{i}{n-1}.
\end{equation*}

\noindent
We conclude that
\begin{equation*}
\mathbb E[|\text{leaves in }G_n|]\le (\psi+1)+\sum_{i=\psi}(n)^{n-1} \dfrac{i}{n-1} = \left(\dfrac{1}{n-1}\sum_{i = 1}^{n-1} i\right) + \psi + O(1+\psi^2(n)/n) = \dfrac{n}{2} + O(\psi),
\end{equation*}
and the proof is finished.
\end{proof}

We now provide a corresponding lower bound, which confirms that the expected number of leaves is $\Theta(n)$ in every dimension $d\ge 1$.
\begin{lemma}\label{lemma LB leaves}
For every dimension $d\ge 1$, the expected number of leaves is at least $(1/4e^4+o(1))n$.
\end{lemma}
\begin{proof}
Define $k = \lfloor n/2\rfloor$. 
Let $\mathcal X_k$ be the family of point sets in $\mathbb T^d_n$ of size $k$, and $\mu_k$ be the uniform measure on $\mathcal X_k$. 
For $\nu\in \mathcal X_k$, define $F(\nu)$ as the expectation of the volume of the Voronoi cell of a uniform random point $x\in \mathbb T^d_n\setminus \nu$ in $\{x\}\cup \nu$.
Note that, for $\nu_k$ sampled from $\mathcal X_k$ according to $\mu_k$, we have $\mathbb E[F(\nu_k)] = \frac{n}{k + 1} < 2$, which is also the expected size of a Voronoi cell in the presence of $k+1$ random points. Thus, the event $\mathcal A = \{F(\nu_k)\le 4\}$ has probability more than $1/2$ by Markov's inequality for $\overline{\mathcal A}$.

For all $i\in \{k,\dots,n-1\}$, denote by $\mathcal B_i$ (resp. $\mathcal C_i$) the event that the Voronoi cell of $p(i)$ in the configuration $p(0),\ldots, p(k), p(i)$ has volume at most 8 (resp.~contains no vertex among $\{k,\dots,n-1\}\setminus i$).
Note that, conditionally on $\mathcal A$, the event $\overline{\mathcal B_i}$ has probability at most $1/2$ by Markov's inequality. Thus, for every integer $i\in [k,n-1]$, the probability that $i$ is a leaf is bounded from below by
\begin{equation*}
\mathbb P(\mathcal C_i)\ge \mathbb P(\mathcal C_i\mid\mathcal B_i)\mathbb P(\mathcal B_i\mid\mathcal A)\mathbb P(\mathcal A)\ge \left(1 - \dfrac{8}{n}\right)^{n-k-1}\cdot \dfrac{1}{2}\cdot \dfrac{1}{2} = \dfrac{1+o(1)}{4e^4},
\end{equation*}
and the lemma is proved.
\end{proof}

\begin{proof}[Proof of Theorem~\ref{thm leaves}]
By Theorem~\ref{thm leaves 1} and Lemma~\ref{lemma LB leaves}, the bounds hold in expectation. 
By Theorem~\ref{thm concentration} applied with $g$ being the number of leaves in a tree and $L = 2$ (by replacing one edge, the number of leaves changes by at most 2), $t = n^{2/3}$ and $\phi = C_0\log n$, we deduce that the number of leaves is concentrated around its expected value, which finishes the proof.
\end{proof}

\begin{remark}
In the proof of Lemma~\ref{lemma LB leaves}, we showed that $\mathbb E[F(\nu_k)] = 2+o(1)$. 
In fact, one may also show that $F(\nu_k) = 2+(\log n)^{O(1)}/n$ both a.a.s.\ and in expectation. 
Since this result is not our main focus, we only sketch how this can be done: 
sticking to the notation introduced in the proof of Lemma~\ref{lemma LB leaves}, define more generally $\mathcal X$ to be the family finite point sets in $\mathbb T^d_n$. 
Furthermore, extend $F$ to $\mathcal X$ with the same definition and define the difference operator $D_x(F) := F(\nu_k \cup \{x\}) - F(\nu_k)$. 
Finally, we use the first order Poincar\'e inquality in its discrete form, which says that
$\mathrm{Var}(F)\le \int_{\mathcal X} \mathbb E[(D_x F)^2]\hspace{0.2em} \lambda(dx)$ (see~\cite{SY}).
\end{remark}

\begin{remark}
Although Theorem~\ref{thm leaves} provides lower and upper bounds for the expected number of leaves in any dimension, the exact constant in front of $n$ seems hard to obtain. We provide an exact expression only in dimension 1: by Theorem~\ref{Palm:Penrose} and Theorem~\ref{thm G infinity}\eqref{item local limit}, it is sufficient to estimate the probability that a vertex $v_0$ at position 0 is a leaf in $G_{\infty}$. 

For some integer $k\ge 1$, let $v_1, \dots, v_{k+1}$ be the sequence of vertices with decreasing arrival times $x_1, \dots, x_{k+1}\in [0,1]$ and positions $p_1, \dots, p_{k+1}\in (0, \infty)$ defined as follows:
\begin{itemize}
    \item $v_{k+1}$ is the left-most vertex to the right of $v_0$ with arrival time $x_{k+1}\in (0, x_0)$ (where $x_0$ is the arrival time of $v_0$),
    \item for every $i\in [k]$, $v_i$ is the left-most vertex between $v_{i-1}$ and $v_{k+1}$ with $x_{i+1}\in (x_0, x_i)$.
\end{itemize}

One may verify that $v_0$ has no neighbor with position in $(0, \infty)$ if and only if, for all $i\in [k]$, $p_i > p_{i+1}-p_i$. Setting $s_i = p_i - p_{i-1}$ (with $p_0 = 0$), the probability of the event described in the two bullets can be computed by multiple integrations based on an exploration of the 1-dimensional PPP(1) on $[0, \infty)$ starting from 0. More precisely, the probability of the above event is given by summing the following multiple integral for all integers $k\ge 0$:

\footnotesize{
\begin{align*}
&\int_{x_0=0}^1\int_{x_1 = x_0}^1\int_{x_2 = x_0}^{x_1}\dots \int_{x_k = x_0}^{x_{k-1}} \int_{x_{k+1}=0}^{x_0} \int_{s_1=0}^{+\infty} \int_{s_2 = 0}^{s_1}\int_{s_3 = 0}^{s_1+s_2}\dots \int_{s_{k+1}=0}^{s_1+\dots+s_k}\\
&(1-x_0)\exp(-s_1(1-x_0)) (x_1-x_0)\exp(-s_2(x_1-x_0))\dots (x_{k-1}-x_0)\exp(-s_k(x_{k-1}-x_0)) x_0 \exp(-s_{k+1} x_0) dx_0 \prod_{i=1}^{k+1} dx_i ds_i.
\end{align*}
}

\noindent
\normalsize{Of course, the probability of avoiding that a vertex on the left of $v_0$ attaches to $v_0$ is computed in the same way. Although we are not able to compute the above sum of multiple integrals, numerical evidence over 100 independent trials with 10000 vertices leads to an empirical average of 4551.7 leaves with standard deviation 34.32.}
\end{remark}

\section{Conclusion and open questions}\label{sec conclusion}
In this paper, we conducted a rigorous analysis of the online $d$-dimensional nearest neighbor tree. 
Furthermore, we introduced and analyzed a natural infinite counterpart of this model. 
Needless to say, besides the already mentioned Conjecture~\ref{conj:local}, many questions remain for further thought: 
\begin{enumerate}
\item We firmly believe that some of our techniques extend to the model of $d$-dimensional $k$-nearest neighbor graph. It might be interesting to analyze this model in more depth.
\item Our partial results and simulations indicate that both the number of leaves and the number of cherries should be increasing with the dimension, and therefore should yield a good estimator for detecting the dimension. Note that the asymptotic count of the cherries was derived recently by~\cite{Cas23} but his result solves the problem for sufficiently large $d$ only.
\item As we pointed out in Subsection~\ref{relwork}, a lot of work on similar models has been done to find the root of the tree. 
More precisely, one might wonder if, for every $\varepsilon > 0$, one may design a set of vertices of constant size $K=K(\varepsilon)$ that contains the root of the graph with probability $1-\varepsilon$. 
It is not clear to us whether statistics providing good estimators in other models can be used in our model as well.
\item An interesting question in the infinite setup could be to study the asymptotic rate of expansion of the graph $G_{\infty}$: given $v\in V(G_{\infty})$, how fast does the number of vertices at graph distance $k$ from $v$ grow as $k\to \infty$?
\end{enumerate}

\section*{Acknowledgements} The authors would like to thank G\'{a}bor Lugosi and Vasiliki Velona for bringing the topic to our attention and for discussions in an early stage of this paper. 
The authors would also like to thank Bas Lodewijks for a careful proofreading, 
and to David Aldous and Andrew Wade for bringing several additional references to our attention. 
The first author would like to thank Ivailo Hartarsky for a discussion around the connectivity of $G_{\infty}$. 
We are also grateful for many useful comments and remarks by the two anonymous referees.

\bibliographystyle{plain}
\bibliography{References}

\begin{thebibliography}{10}

\bibitem{Acan}
H.~Acan.
\newblock Perfect matchings and {H}amilton cycles in uniform attachment graphs.
\newblock {\em arXiv preprint arXiv:1908.03659}, 2019.

\bibitem{broadcasting}
L.~Addario-Berry, L.~Devroye, G.~Lugosi, and V.~Velona.
\newblock Broadcasting on random recursive trees.
\newblock {\em Annals of Applied Probability}, 2021.

\bibitem{ATT16}
D.~Ahlberg, V.~Tassion, and A.~Teixeira.
\newblock Sharpness of the phase transition for continuum percolation in
  $\mathbb{R}^2$.
\newblock {\em Probab. Theory Relat. Fields}, 172(1-2):525--581, 2016.

\bibitem{Aldous}
D.~Aldous.
\newblock Random partitions of the plane via {P}oissonian coloring and a
  self-similar process of coalescing planar partitions.
\newblock {\em Annals of Probability}, 46(4):2000--2037, 2018.

\bibitem{AldousSteele}
D.~Aldous and J.~M. Steele.
\newblock {\em The Objective Method: Probabilistic Combinatorial Optimization
  and Local Weak Convergence}.
\newblock Springer Berlin Heidelberg, 2004.

\bibitem{Alt}
N.~S. Altman.
\newblock An introduction to kernel and nearest-neighbor non-parametric
  regression.
\newblock {\em The American Statistician}, 46(3):175--185, 1992.

\bibitem{Baccelli}
F.~Baccelli and C.~Bordenave.
\newblock The radial spanning tree of a {P}oisson {P}oint {P}rocess.
\newblock {\em The Annals of Applied Probability}, 17(1):305--359, 2007.

\bibitem{Coupier}
F.~Baccelli, D.~Coupier, and V.~C. Tran.
\newblock Semi-infinite paths of the two-dimensional radial spanning tree.
\newblock {\em Advances in Applied Probability}, 45(4):895--916, 2013.

\bibitem{Bagchi}
A.~Bagchi and S.~Bansal.
\newblock On the metric distortion of nearest-neighbour graphs on random point
  sets.
\newblock {\em arXiv preprint arXiv:0804.3784}, 2008.

\bibitem{BJ}
T.~Bailey and A.~Jain.
\newblock A note on distance-weighted {$k$}-nearest neighbor rules.
\newblock {\em IEEE Transactions on Systems, Man, and Cybernetics},
  8(4):311--313, 1978.

\bibitem{BB11}
P.~N. Balister and B.~Bollob\'{a}s.
\newblock Percolation in the $k$-nearest neighbor graphs.
\newblock {\em https://www.memphis.edu/msci/people/pbalistr/kperc.pdf}, 2011.

\bibitem{BBSW1}
P.~N. Balister, B.~Bollob\'{a}s, A.~Sarkar, and M.~J. Walters.
\newblock Connectivity of random $k$-nearest neighbour graphs.
\newblock {\em Advances in Applied Probability}, 37:1--24, 2005.

\bibitem{BBSW2}
P.~N. Balister, B.~Bollob\'{a}s, A.~Sarkar, and M.~J. Walters.
\newblock A critical constant for the $k$-nearest neighbour model.
\newblock {\em Advances in Applied Probability}, 41:1--12, 2009.

\bibitem{BanBhamidi}
E.~Banerjee and S.~Bhamidi.
\newblock Root finding algorithms and persistence of {J}ordan centrality in
  growing random trees.
\newblock {\em The Annals of Applied Probability}, 32(3):2180--2210, 2022.

\bibitem{BarAlb}
A.~Barab\'{a}si and R.~Albert.
\newblock Emergence of scaling in random networks.
\newblock {\em Science}, 286:509–512, 1999.

\bibitem{BBCS23}
A.-L. Basdevant, G.~Blanc, N.~Curien, and A.~Singh.
\newblock Fractal properties of the frontier in {P}oissonian coloring.
\newblock {\em arXiv preprint arXiv:2302.07254}, 2023.

\bibitem{BenjaminiSchramm}
I.~Benjamini and O.~Schramm.
\newblock Recurrence of distributional limits of finite planar graphs.
\newblock {\em Selected Works of Oded Schramm}, page 533–545, 2011.

\bibitem{BBBCR03}
N.~Berger, B.~Bollob{\'a}s, C.~Borgs, J.~Chayes, and O.~Riordan.
\newblock Degree distribution of the {FKP} network model.
\newblock In {\em International Colloquium on Automata, Languages, and
  Programming}, pages 725--738. Springer, 2003.

\bibitem{BC}
S.~Bermejo and J.~Cabestany.
\newblock Adaptive soft {$k$}-nearest neighbour classifiers.
\newblock {\em Pattern Recognition}, 33(12):1999--2005, 2000.

\bibitem{BollobasRiordan}
B.~Bollob\'{a}s and O.~Riordan.
\newblock The diameter of a scale-free random graph.
\newblock {\em Combinatorica}, 24:5--34, 2004.

\bibitem{BRST}
B.~Bollob\'{a}s, O.~Riordan, J.~Spencer, and G.~Tusn\'{a}dy.
\newblock The degree sequence of a scale-free random graph process.
\newblock {\em Random Structures and Algorithms}, 18:279--290, 2001.

\bibitem{Charles2}
C.~Bordenave.
\newblock Navigation on a {P}oisson {P}oint {P}rocess.
\newblock {\em The Annals of Applied Probability}, 18(2):708--746, 2008.

\bibitem{Adam}
S.~Bubeck, L.~Devroye, and G.~Lugosi.
\newblock Finding {A}dam in random growing trees.
\newblock {\em Random Structures and Algorithms}, 50(2):158–172, 2017.

\bibitem{Bubeck2}
S.~Bubeck, R.~Eldan, E.~Mossel, and M.~Z. R\'{a}cz.
\newblock From trees to seeds: on the inference of the seed from large trees in
  the uniform attachment model.
\newblock {\em Bernoulli}, 23(4A):2887–2916, 2017.

\bibitem{Bubeck}
S.~Bubeck, E.~Mossel, and M.~Z. R\'{a}cz.
\newblock On the influence of the seed graph in the preferential attachment
  model.
\newblock {\em IEEE Transactions on Network Science and Engineering},
  2(1):30--39, 2015.

\bibitem{Can}
C.~Canonne.
\newblock A short note on {P}oisson tail bounds, 2019.

\bibitem{Cas23}
J.~Casse.
\newblock Siblings in $d$-dimensional nearest neighbour trees.
\newblock {\em arXiv preprint arXiv:2302.10795}, 2023.

\bibitem{Coupier2}
D.~Coupier.
\newblock Sublinearity of the number of semi-infinite branches for geometric
  random trees.
\newblock {\em Electronic Journal of Probability}, 23:1--33, 2018.

\bibitem{CH}
T.~Cover and P.~Hart.
\newblock Nearest {N}eighbor {P}attern {C}lassification.
\newblock {\em IEEE Trans. Inf. Theor.}, 13(1):21–27, September 2006.

\bibitem{Nicolas}
N.~Curien, T.~Duquesne, I.~Kortchemski, and I.~Manolescu.
\newblock Scaling limits and influence of the seed graph in preferential
  attachment trees.
\newblock {\em Journal de l’\'{E}cole polytechnique -- Math\'{e}matiques},
  2(1):1--34, 2015.

\bibitem{Dev1987}
L.~Devroye.
\newblock Branching processes in the analysis of the heights of trees.
\newblock {\em Acta Informatica}, 24(3):277–298, 1987.

\bibitem{Dev1988}
L.~Devroye.
\newblock Applications of the theory of records in the study of random trees.
\newblock {\em Acta Informatica}, 26:123–130, 1988.

\bibitem{DFF}
L.~Devroye, O.~Fawzi, and N.~Fraiman.
\newblock The height of scaled attachment random recursive trees.
\newblock {\em Discrete Mathematics and Theoretical Computer Science}, pages
  129--142, 2010.

\bibitem{Luc2}
L.~Devroye and T.~Reddad.
\newblock On the discovery of the seed in uniform attachment trees.
\newblock {\em Internet Mathematics}, 1(1), 2019.

\bibitem{Drmota2009}
M.~Drmota.
\newblock {\em Random trees: an interplay between combinatorics and
  probability}.
\newblock Springer Science and Business Media, 2009.

\bibitem{Dud}
S.~A. Dudani.
\newblock The distance-weighted {$k$}-nearest neighbor rule.
\newblock {\em IEEE Transactions on Systems, Man, and Cybernetics},
  SMC-6(4):325--327, 1976.

\bibitem{Eppstein}
D.~Eppstein, M.~S. Paterson, and F.~F. Yao.
\newblock On nearest-neighbor graphs.
\newblock {\em Disc. Comput. Geom.}, 17:263--282, 1997.

\bibitem{Fanti1}
G.~Fanti, P.~Kairouz, S.~Oh, K.~Ramchandran, and P.~Viswanath.
\newblock Rumor {S}ource {O}bfuscation on {I}rregular {T}rees.
\newblock {\em ACM SIGMETRICS}, 44:153–164, 2016.

\bibitem{Fanti2}
G.~Fanti, P.~Kairouz, S.~Oh, K.~Ramchandran, and P.~Viswanath.
\newblock Hiding the {R}umor {S}ource.
\newblock {\em IEEE Transactions on Information Theory}, 63(10):6679–6713,
  2017.

\bibitem{Fanti3}
G.~Fanti, P.~Kairouz, S.~Oh, and P.~Viswanath.
\newblock Spy vs. {S}py: {R}umor {S}ource {O}bfuscation.
\newblock {\em ACM SIGMETRICS}, 43:271–284, 2015.

\bibitem{FH}
E.~Fix and J.~L. Hodges~Jr.
\newblock Discriminatory analysis-nonparametric discrimination: consistency
  properties.
\newblock Technical report, Univ. Berkeley, California, 1951.

\bibitem{Flax}
A.~Flaxman, A.~M. Frieze, and T.~Fenner.
\newblock High degree vertices and eigenvalues in the preferential attachment
  graph.
\newblock {\em Internet Mathematics}, 2:1--20, 2005.

\bibitem{FKRandom}
A.~Frieze and M.~Karo\'{n}ski.
\newblock {\em Introduction to {R}andom {G}raphs}.
\newblock Cambridge University Press, 2016.

\bibitem{PawelHC}
A.~Frieze, X.~P\'{e}rez-Gim\'{e}nez, P.~Pra\l{}at, and B.~Reiniger.
\newblock Perfect matchings and {H}amiltonian cycles in the preferential
  attachment model.
\newblock {\em Random Structures and Algorithms}, 54(2):258--288, 2019.

\bibitem{Gil61}
E.~N. Gilbert.
\newblock Random plane networks.
\newblock {\em Journal of the Society for Industrial and Applied Mathematics},
  9(4):533--543, 1961.

\bibitem{HM}
O.~H\"{a}ggstr\"{o}m and R.~Meester.
\newblock Nearest neighbor and hard sphere models in continuum percolation.
\newblock {\em Random Structures and Algorithms}, 9:295--315, 1996.

\bibitem{Coxprocesses}
B.~Jahnel and A.~T\'{o}bi\'{a}s.
\newblock {SINR} percolation for {C}ox point processes with random powers.
\newblock {\em Advances in Applied Probability}, 54(1):227--253, 2022.

\bibitem{JahnelTobias}
B.~Jahnel and A.~T\'{o}bi\'{a}s.
\newblock Absence of percolation in graphs based on stationary point processes
  with degrees bounded by two.
\newblock {\em Random Structures and Algorithms}, 62(1):240--255, 2023.

\bibitem{JLR}
S.~Janson, T.~\L{}uczak, and A.~Ruci\'{n}ski.
\newblock {\em Random graphs}.
\newblock Wiley-Interscience Series in Discrete Mathematics and Optimization.
  Wiley-Interscience, New York, 2000.

\bibitem{Jog1}
V.~Jog and P.-L. Loh.
\newblock Analysis of centrality in sublinear preferential attachment trees via
  the {C}rump-{M}ode-{J}agers branching process.
\newblock {\em IEEE Transactions on Network Science and Engineering},
  4(1):1–12, 2016.

\bibitem{Jog2}
V.~Jog and P.-L. Loh.
\newblock Persistence of centrality in random growing trees.
\newblock {\em Random Structures and Algorithms}, 52(1):136–157, 2018.

\bibitem{Khim}
J.~Khim and P.-L. Loh.
\newblock Confidence sets for the source of a diffusion in regular trees.
\newblock {\em IEEE Transactions on Network Science and Engineering},
  4:27–40, 2016.

\bibitem{Last}
G.~Last and M.~Penrose.
\newblock {\em Lectures on the {P}oisson process}.
\newblock Cambridge University Press, 2017.

\bibitem{levy2021density}
E.~Levy.
\newblock On the density for sums of independent exponential, {E}rlang and
  gamma variates.
\newblock {\em Statistical Papers}, pages 1--29, 2021.

\bibitem{Gabor2}
G.~Lugosi and A.~S. Pereira.
\newblock Finding the seed of uniform attachment trees.
\newblock {\em Electronic {J}ournal of {P}robability}, 24:1--15, 2019.

\bibitem{Mah1991}
H.~Mahmoud.
\newblock Limiting distributions for path lengths in recursive trees.
\newblock {\em Probability in the Engineering and Informational Sciences},
  5(01):53--–59, 1991.

\bibitem{Mahmoud1992}
H.~Mahmoud.
\newblock Distances in random plane-oriented recursive trees.
\newblock {\em Journal of Computational and Applied Mathematics},
  41(1-2):237–245, 1992.

\bibitem{MR96}
R.~Meester and R.~Roy.
\newblock {\em Continuum {P}ercolation}.
\newblock Cambridge University Press, 1996.

\bibitem{Mori}
T.~F. M\'{o}ri.
\newblock The maximum degree of the {B}arab\'{a}si-{A}lbert random tree.
\newblock {\em Combinatorics, Probability and Computing}, 14(3):339--348, 2005.

\bibitem{Navlakha}
S.~Navlakha and C.~Kingsford.
\newblock Network archaeology: uncovering ancient networks from present-day
  interactions.
\newblock {\em PLoS Computational Biology}, 7(4):e1001119, 2011.

\bibitem{SeniPain}
M.~Pain and D.~S\'{e}nizergues.
\newblock Correction terms for the height of weighted recursive trees.
\newblock {\em The Annals of Applied Probability}, 32(4):3027--3059, 2022.

\bibitem{Pen}
M.~Penrose.
\newblock {\em Random {G}eometric {G}raphs}.
\newblock Oxford University Press, 2003.

\bibitem{PW08}
M.~D. Penrose and A.~R. Wade.
\newblock Limit theory for the random on-line nearest-neighbor graph.
\newblock {\em Random Structures and Algorithms}, 32(2):125--156, 2008.

\bibitem{Pittel1994}
B.~Pittel.
\newblock Note on the heights of random recursive trees and random $m$-ary
  search trees.
\newblock {\em Random Structures and Algorithms}, 5:337--348, 1994.

\bibitem{Racz}
M.~Z. R\'{a}cz and A.~Sridhar.
\newblock Correlated randomly growing graphs.
\newblock {\em The Annals of Applied Probability}, 32(2):1058--1111, 2022.

\bibitem{Sal11}
J.~Salez.
\newblock {\em Some implications of local weak convergence for sparse random
  graphs}.
\newblock PhD thesis, Universit{\'e} Pierre et Marie Curie -- Paris VI; Ecole
  Normale Sup{\'e}rieure de Paris, 2011.

\bibitem{SY}
M.~Schulte and J.~E. Yukich.
\newblock Multivariate second order {P}oincar{\'e} inequalities for {P}oisson
  functionals.
\newblock {\em Electronic Journal of Probability}, 24, 2019.

\bibitem{Seni}
D.~S\'{e}nizergues.
\newblock Geometry of weighted recursive and affine preferential attachment
  trees.
\newblock {\em Electronic Journal of Probability}, 26:1--56, 2021.

\bibitem{Shah}
D.~Shah and T.~Zaman.
\newblock Detecting {S}ources of {C}omputer {V}iruses in {N}etworks: {T}heory
  and {E}xperiment.
\newblock {\em ACM SIGMETRICS}, 38:203–214, 2010.

\bibitem{Shah2}
D.~Shah and T.~Zaman.
\newblock Rumors in a network: Who’s the culprit?
\newblock {\em IEEE Transactions on Information Theory}, 57(8):5163–5181,
  2011.

\bibitem{Ste89}
J.~M. Steele.
\newblock Cost of sequential connection for points in space.
\newblock {\em Operations Research Letters}, 8(3):137--142, 1989.

\bibitem{TY07}
S.~H. Teng and F.~Yao.
\newblock $k$-nearest neighbor clustering and percolation theory.
\newblock {\em Algorithmica}, 49:192–211, 2007.

\bibitem{Tra22}
T.~Trauthwein.
\newblock Quantitative {CLT}s on the {P}oisson space via {S}korohod estimates
  and {$p$}-{P}oincar{\'e} inequalities.
\newblock {\em arXiv preprint arXiv:2212.03782}, 2022.

\bibitem{Remco1}
R.~van~der Hofstad.
\newblock {\em Random graphs and complex networks, Volume 1}.
\newblock Cambridge University Press, 2016.

\bibitem{Remco2}
R.~van~der Hofstad.
\newblock {\em Random graphs and complex networks, Volume 2}.
\newblock https://www.win.tue.nl/~rhofstad/NotesRGCN.html, 2021.

\bibitem{Wad09}
A.~R. Wade.
\newblock Asymptotic theory for the multidimensional random on-line
  nearest-neighbour graph.
\newblock {\em Stochastic Processes and their Applications}, 119(6):1889--1911,
  2009.

\bibitem{War}
L.~Warnke.
\newblock On the method of typical bounded differences.
\newblock {\em Combinatorics, Probability and Computing}, 25(2):269--299, 2016.

\bibitem{Kumar}
F.~Xue and P.~R. Kumar.
\newblock The number of neighbors needed for connectivity of wireless networks.
\newblock {\em Wireless Networks}, 10:169–181, 2004.

\bibitem{Yule}
G.~U. Yule.
\newblock A {M}athematical {T}heory of {E}volution, based on the {C}onclusions
  of {D}r. {J}. {C}. {W}illis, {F}.{R}.{S}.
\newblock {\em Philosophical Transactions of the Royal Society B},
  213(402-410):21--87, 1925.

\end{thebibliography}

\end{document}